\documentclass[11pt]{article}

\usepackage{amsmath,amsthm,verbatim,amssymb,amsfonts,amscd, graphicx}
\usepackage{graphics}
\usepackage{authblk}
\usepackage{upgreek}
\usepackage{amsrefs,hyperref}
\topmargin0.0cm
\headheight0.0cm
\headsep0.0cm
\oddsidemargin0.0cm
\textheight23.0cm
\textwidth16.5cm
\footskip1.0cm
\theoremstyle{plain}
\newtheorem{theorem}{Theorem}[section]

\newtheorem{corollary}[theorem]{Corollary}
\newtheorem{definition}[theorem]{Definition}

\newtheorem{lemma}[theorem]{Lemma}
\newtheorem{proposition}[theorem]{Proposition}

\theoremstyle{remark}
\newtheorem{remark}[theorem]{Remark}

\numberwithin{equation}{section}
\newcommand{\diff}{\mathop{}\!\mathrm{d}}
\DeclareMathOperator*{\Hess}{Hess}
\DeclareMathOperator*{\tr}{tr}
\DeclareMathOperator*{\divr}{div}

\DeclareMathOperator\spn{span}

\title{Finite-time blowup for smooth solutions of the Navier--Stokes equations on the whole space with linear growth at infinity}
\author[1]{Evan Miller}
\affil[1]{McMaster University, 
Department of Mathematics and Statistics

emiller@msri.org}

\begin{document}
\maketitle

\begin{abstract}
In this paper we consider smooth solutions of the Navier--Stokes equations with a linear dependence on the spatial variable. We reduce the evolution of these solutions to a matrix ODE, and show that there are such solutions that blowup in finite-time. Note that because these solutions have linear growth at infinity, this blowup is not a counterexample disproving the global regularity of strong solutions of the Navier--Stokes equations, as strong solutions must have sufficient decay at infinity. This paper does not resolve the Millennium Problem. Nonetheless, these solutions do exhibit several properties that are closely related to the problem of blowup for strong solutions of Navier--Stokes equations, including the presence of unbounded planar stretching, and the alignment of the vorticity with the middle eigenvector of the strain matrix.
\end{abstract}

\section{Introduction}

The Navier--Stokes equations are the fundamental equations of fluid mechanics; the incompressible Navier--Stokes equations are given by
\begin{align} \label{NS}
    \partial_t u-\nu \Delta u+(u\cdot\nabla)u+\nabla p&=0 \\
    \nabla \cdot u &=0, \label{DivFree}
\end{align}
where $u$ is the velocity, $p$ is the pressure, and $\nu>0$ is the viscosity.
The equation \eqref{NS} expresses Newton's second law in the context of the flow of water.
$\partial_t u+(u\cdot\nabla)u$ is the acceleration in the Lagrangian frame, while $-\nabla p$ is the force due to the pressure, and $\nu\Delta u$ is the force from the internal friction of the water, which gives rise to viscosity, and to the viscous dissipation of energy.
The equation \eqref{DivFree} expresses the conservation of mass for an incompressible fluid, and is therefore often referred to as the incompressibility condition.

In this paper we will consider a class of smooth solutions to the Navier--Stokes equation with linear growth at infinity. These solutions will have a linear spatial dependence at each time, and will have the from
$u(x,t)=M(t)x$. We will show that this class of solutions exhibits finite-time blowup, although it is important to note at the outset that this does not resolve the Navier--Stokes regularity problem, as these solutions do not decay at infinity, and hence have infinite energy and are not in any Lebesgue or Sobolev spaces.

\begin{definition} \label{MatrixDef}
For $M\in C^1\left([0,T_{max});\mathbb{R}^{3\times 3}\right)$, we will say that $u(x,t)=M(t)x,$ is a spatially linear solution of the Navier--Stokes equation if
\begin{align} \label{MatrixNS}
    \partial_t M +M^2-\frac{1}{3}\tr\left(M^2\right)I_3&=0, \\
    \tr(M)&=0.
\end{align}
If $M$ satisfies \eqref{MatrixNS}, then we will say that $M$ is a solution of the matrix Navier--Stokes equation.
\end{definition}

This definition may appear somewhat arbitrary on the surface, however we can show that if $u$ satisfies Definition \ref{MatrixDef}, then $u$ is a classical solution of the Navier--Stokes equation.

\begin{proposition}
If $u\in C^1\left([0,T_{max});C^\infty\right)$ is a spatially linear solution of the Navier--Stokes equation, then $u$ is a classical solution of the Navier--Stokes equation with pressure
\begin{equation}
    p(x,t)=-\frac{1}{6}\tr\left(M(t)^2\right)|x|^2.
\end{equation}
\end{proposition}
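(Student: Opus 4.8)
The plan is to verify directly by substitution that the ansatz $u(x,t)=M(t)x$ together with the stated pressure satisfies both \eqref{NS} and \eqref{DivFree}. Since $u$ is linear in $x$ and $M\in C^1$ by hypothesis, $u$ is automatically $C^1$ in time and $C^\infty$ in space, so once the two equations are checked pointwise the solution is classical; the entire content of the proposition is therefore a finite-dimensional computation, and I expect no serious obstacle beyond careful index bookkeeping.

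First I would dispatch the incompressibility condition. Writing $u_i=M_{ij}x_j$ gives $\partial_k u_i = M_{ik}$, so that $\nabla\cdot u=\partial_i u_i=\tr(M)$, which vanishes by the second equation of \eqref{MatrixNS}. Next I would compute the three remaining terms of \eqref{NS} one at a time. The time derivative is immediate, $\partial_t u = M'(t)x$, and the viscous term vanishes because $u$ is affine in $x$, so $\Delta u=0$; this is precisely why the viscosity $\nu$ disappears from the resulting matrix ODE.

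The one computation requiring genuine care is the convective term. Using $u_k=M_{kl}x_l$ and $\partial_k u_i=M_{ik}$, I would write $((u\cdot\nabla)u)_i=u_k\partial_k u_i=M_{kl}x_l M_{ik}=(M^2)_{il}x_l$, that is, $(u\cdot\nabla)u=M^2 x$. From the prescribed pressure $p=-\tfrac16\tr(M^2)|x|^2$ one obtains $\nabla p=-\tfrac13\tr(M^2)x$. Assembling these four contributions gives
\[
\partial_t u-\nu\Delta u+(u\cdot\nabla)u+\nabla p=\left(M'+M^2-\tfrac13\tr\left(M^2\right)I_3\right)x,
\]
which is identically zero by the first equation of \eqref{MatrixNS}, so \eqref{NS} holds.

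To make the argument conceptual rather than a bare verification, I would also explain where the pressure comes from. Taking the divergence of \eqref{NS} and using $\nabla\cdot u=0$ yields the pressure Poisson equation $\Delta p=-\partial_i u_k\,\partial_k u_i=-M_{ki}M_{ik}=-\tr(M^2)$, whose natural radial solution is the quadratic $p=-\tfrac16\tr(M^2)|x|^2$, consistent with $\Delta(|x|^2)=6$. This clarifies why the term $-\tfrac13\tr(M^2)I_3$ appears in Definition \ref{MatrixDef}: it is exactly the contribution of $\nabla p$, equivalently the trace-removal that projects $M^2$ onto the space of trace-free matrices and thereby preserves $\tr(M)=0$. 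The main thing to get right is the index computation in the convective term and the sign and constant in the pressure; everything else is routine.
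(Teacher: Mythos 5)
Your proposal is correct and follows essentially the same route as the paper: a direct pointwise verification that the linear ansatz with the stated quadratic pressure satisfies \eqref{NS} and \eqref{DivFree}, with the same computations $\nabla\cdot u=\tr(M)=0$, $(u\cdot\nabla)u=M^2x$, $\Delta u=0$, and $\nabla p=-\tfrac13\tr\left(M^2\right)x$. The added observation deriving the pressure from the Poisson equation $\Delta p=-\tr\left(M^2\right)$ is a nice motivational supplement but does not change the argument.
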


\begin{proof}
$u(x,t)=M(t)x$ is a spatially linear solution of the Navier--Stokes equation, and so we know that
\begin{align}
    \partial_t M +M^2-\frac{1}{3}\tr\left(M^2\right)I_3&=0, \\
    \tr(M)&=0.
\end{align}
Next we observe that 
\begin{equation}
    \nabla u(x,t)=M^{tr}(t).
\end{equation}
Therefore we can conclude that
\begin{align}
    \nabla\cdot u
    &=\tr(\nabla u)\\
    &=\tr\left(M^{tr}\right) \\
    &=0,
\end{align}
and the divergence free condition is satisfied.
Next we observe that 
\begin{align}
    (u\cdot\nabla)u 
    &=
    (\nabla u)^{tr}u \\
    &=
    M(t)^2 x,
\end{align}
and that
\begin{equation}
    \nabla p(x,t)=-\frac{1}{3}\tr\left(M(t)^2\right)x.
\end{equation}
We can also see that 
\begin{equation}
    \partial_t u(x,t)=\partial_t M(t)x,
\end{equation}
and that 
\begin{equation}
    -\Delta u=0.
\end{equation}
Therefore we may conclude that
\begin{align}
    \partial_t u-\nu \Delta u+(u\cdot\nabla u)+\nabla p
    &=
    \left(\partial_t M+M^2-\frac{1}{3}\tr\left(M^2\right)I_3
    \right)x \\
    &=0.
\end{align}
This completes the proof.
\end{proof}

\begin{remark}
Note that while $u$ is a classical solution of the Navier--Stokes equations, it is not a Leray weak solution \cite{Leray} or a mild solution \cite{KatoFujita}. This is because the growth at infinity means that $u$ is not in the energy space $L^2$, or in any scale critical or subcritical spaces---$L^3, H^1, \dot{H}^1$, etc. This means that while $u$ is a classical solution of the Navier--Stokes equation, it is not really a strong solution.
We will also note that because $u$ is a linear function in the spatial variable, it must be harmonic with $-\Delta u=0$. This in particular means that $u$ is a classical solution of the Euler equation as well as the Navier--Stokes equation with arbitrary $\nu>0$.
\end{remark}

The structure of general matrices is not all that nice to work with so we will introduce equations for the evolution of the symmetric and anti-symmetric parts of $M$---with the anti-symmetric part represented by the vorticity vector $\omega$---which have additional structure that we can exploit. The fact that the symmetric matrices are diagonalizable will be a particularly crucial fact in our analysis.

\begin{definition} \label{StrainVortPairDef}
    For a strain $S\in C^1(\left[0,T_{max};
    \mathcal{S}^{3\times 3}\right), \tr(S)=0,$ 
    and a vorticity $\omega \in 
    C^1(\left[0,T_{max};\mathbb{R}^{3}\right)$,
    we will say that the pair $(S,\omega)$ is a solution of the strain-vorticity pair equation if
    \begin{align}
        \partial_t S+S^2-\frac{1}{3}|S|^2
        +\frac{1}{4}\omega\otimes\omega-\frac{1}{12}|\omega|^2 I_3
        &=0 \\
        \partial_t \omega-S\omega&=0.
    \end{align}
\end{definition}

\begin{definition}
    For $M\in \mathbb{R}^{3 \times 3}, \tr(M)=0$ take the associated strain and vorticity to be determined by the symmetric and anti-symmetric parts respectively. The associated strain $S$ is given by
    \begin{equation}
        S=\frac{1}{2}\left(M+M^{tr}\right),
    \end{equation}
    and the associated vorticity $\omega$ is given by
    \begin{equation}
        \left( \begin{array}{ccc}
             0& \omega_3 & -\omega_2  \\
             -\omega_3& 0 & \omega_1 \\
             \omega_2 & -\omega_1 &0
        \end{array}
        \right)
        = M^{tr}-M.
    \end{equation}
\end{definition}

We will show that a pair $(S,\omega)$ is a solution of the strain-vorticity pair equation if and only if the associated $M$ is a solution to the matrix Navier--Stokes equation.

\begin{proposition} \label{StrainVortEquivalence}
$M\in C^1\left([0,T_{max});\mathbb{R}^{3\times 3}\right), \tr\left(M\right)=0$ is solution of the matrix Navier--Stokes equation if and only if the associated strain-vorticity pair $(S,\omega)$ is a solution to the strain-vorticity pair equation.
\end{proposition}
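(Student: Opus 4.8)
The plan is to exploit the orthogonal decomposition of $\mathbb{R}^{3\times 3}$ into symmetric and anti-symmetric matrices. Writing $M = S + A$ with $S = \frac12\left(M + M^{tr}\right)$ and $A = \frac12\left(M - M^{tr}\right)$, I first record that $\tr(M) = 0$ is equivalent to $\tr(S) = 0$, since $A$ is automatically traceless. Comparing the definition of the associated vorticity with the standard cross-product matrix $[\omega]_\times$, defined by $[\omega]_\times v = \omega\times v$, one sees that $M - M^{tr} = [\omega]_\times$, so that $A = \frac12[\omega]_\times$. Because the map $M \mapsto (S, A)$ is a linear bijection onto the product of the traceless symmetric matrices with the anti-symmetric matrices, and $A$ is in turn in linear bijection with $\omega$, it suffices to project the matrix Navier--Stokes equation onto the symmetric and anti-symmetric subspaces and show that these two projected equations are exactly the two equations of the strain-vorticity pair. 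Since every step is an identity, the resulting equivalence is automatically an ``if and only if.''

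Next I would decompose $M^2 = S^2 + (SA + AS) + A^2$ and determine the symmetry type of each term: $S^2$ and $A^2$ are symmetric, while $SA + AS$ is anti-symmetric, and the term $\frac13\tr\left(M^2\right) I_3$ is symmetric. Hence the anti-symmetric projection of the matrix equation reads $\partial_t A + (SA + AS) = 0$, and the symmetric projection reads $\partial_t S + S^2 + A^2 - \frac13\tr\left(M^2\right) I_3 = 0$. The computation then reduces to three algebraic identities: (i) $[\omega]_\times^2 = \omega\otimes\omega - |\omega|^2 I_3$; (ii) for traceless symmetric $S$, the identity $S[\omega]_\times + [\omega]_\times S = [-S\omega]_\times$; and (iii) the trace identity $\tr\left(M^2\right) = |S|^2 - \frac12|\omega|^2$, which follows from $\tr(SA) = 0$ together with (i).

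For the anti-symmetric part, substituting $A = \frac12[\omega]_\times$ and applying (ii) gives $SA + AS = \frac12[-S\omega]_\times$, so that $\partial_t A + SA + AS = \frac12[\partial_t\omega - S\omega]_\times$; since $\mu \mapsto [\mu]_\times$ is injective, the anti-symmetric projection is equivalent to $\partial_t\omega - S\omega = 0$. For the symmetric part, identity (i) gives $A^2 = \frac14\omega\otimes\omega - \frac14|\omega|^2 I_3$, and identity (iii) gives $\frac13\tr\left(M^2\right) I_3 = \frac13|S|^2 I_3 - \frac16|\omega|^2 I_3$; combining the two $|\omega|^2 I_3$ contributions as $-\frac14 + \frac16 = -\frac{1}{12}$ yields exactly $\partial_t S + S^2 - \frac13|S|^2 I_3 + \frac14\omega\otimes\omega - \frac{1}{12}|\omega|^2 I_3 = 0$. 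This matches Definition \ref{StrainVortPairDef}, completing both directions simultaneously.

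The main obstacle is identity (ii), and in particular its dependence on $\tr(S) = 0$: without the trace condition the right-hand side would instead be $[(\tr(S) I_3 - S)\omega]_\times$, and the vortex-stretching term $S\omega$ would not emerge cleanly. I would verify (ii) by evaluating both sides on an arbitrary vector using $[\omega]_\times v = \omega\times v$ and the vector triple product, or by a direct entrywise check, taking care that the traceless assumption is precisely what collapses the diagonal contributions. The remaining bookkeeping---tracking the coefficients $\frac14$, $\frac16$, $\frac{1}{12}$ and confirming $\tr(SA) = 0$---is routine once these identities are in hand.
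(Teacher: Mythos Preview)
Your proposal is correct and follows essentially the same route as the paper's own proof: decompose $M$ into its symmetric and anti-symmetric parts, expand $M^2$, use the identities $A^2=\frac14\omega\otimes\omega-\frac14|\omega|^2I_3$ and $\tr(M^2)=|S|^2-\frac12|\omega|^2$, and then project onto symmetric and anti-symmetric subspaces, invoking the trace-free identity $S[\omega]_\times+[\omega]_\times S=[-S\omega]_\times$ for the vorticity equation. The only cosmetic difference is a sign convention for $A$ (you take $A=\frac12(M-M^{tr})$ whereas the paper takes $A=\frac12(M^{tr}-M)$), which does not affect the argument.
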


\begin{proof}
Suppose $M\in C^1\left([0,T_{max});\mathbb{R}^{3\times 3}\right), \tr\left(M\right)=0$ is solution of the matrix Navier--Stokes equation.
We will begin by letting
\begin{align}
    S&=\frac{1}{2}\left(M+M^{tr}\right) \\
    A&=\frac{1}{2}\left(-M+M^{tr}\right),
\end{align}
where
\begin{equation}
    A=\frac{1}{2}
    \left( \begin{array}{ccc}
             0& \omega_3 & -\omega_2  \\
             -\omega_3& 0 & \omega_1 \\
             \omega_2 & -\omega_1 &0
        \end{array}
        \right).
\end{equation}
First we observe that $M=S-A$, 
and therefore we may observe that
\begin{align}
    |M|^2&=|S|^2+|A|^2 \\
    &= |S|^2+\frac{1}{2}|\omega|^2,
\end{align}
and that likewise
\begin{align}
    M^2
    &=
    (S-A)^2 \\
    &=
    S^2+A^2 -SA- AS,
\end{align}
and
\begin{equation}
    A^2=\frac{1}{4}\omega\otimes\omega
    -\frac{1}{4}|\omega|^2I_3.
\end{equation}
This implies that
\begin{equation}
    \tr\left(M^2\right)=|S|^2-\frac{1}{2}|\omega|^2.
\end{equation}
Therefore we can see that
\begin{equation}
    M^2-\frac{1}{3}\tr\left(M^2\right)I_3
    =
    S^2-\frac{1}{3}|S|^2I_3
    +\frac{1}{4}\omega\otimes\omega
    -\frac{1}{12}|\omega|^2 I_3
    -SA-AS.
\end{equation}
Using this we can see that
\begin{equation}
    \partial_t M+M^2
    -\frac{1}{3}\tr\left(M^2\right) I_3=0
\end{equation}
if and only if
\begin{equation} \label{M-writtenout}
    \partial_t S-\partial_t A
    +S^2-\frac{1}{3}|S|^2I_3
    +\frac{1}{4}\omega\otimes\omega
    -\frac{1}{12}|\omega|^2 I_3
    -SA-AS=0
\end{equation}
A matrix is only zero if both its symmetric and anti-symmetric parts are zero, so taking the symmetric and anti-symmetric parts of \eqref{M-writtenout} we find that
\begin{equation}
    \partial_t M+M^2
    -\frac{1}{3}\tr\left(M^2\right) I_3=0
\end{equation}
if and only if
\begin{align}
    \partial_t S+S^2-\frac{1}{3}|S|^2
    +\frac{1}{4}\omega\otimes\omega-\frac{1}{12}|\omega|^2 I_3
    &=0 \\
    \partial_t A+SA+AS&=0.
\end{align}

It now remains only to show that 
\begin{equation}
    \partial_t A+SA+AS=0
\end{equation}
if and only if
\begin{equation}
    \partial_t\omega-S\omega=0.
\end{equation}
Using the condition that $\tr(S)=0,$
it is an elementary linear algebra computation to observe that
\begin{equation}
    SA+AS=
    -\frac{1}{2}
    \left( \begin{array}{ccc}
             0& (S\omega)_3 & -(S\omega)_2  \\
             -(S\omega)& 0 & (S\omega)_1 \\
             (S\omega)_2 & -(S\omega)_1 &0
        \end{array}
        \right),
\end{equation}
and this completes the proof.

\end{proof}

\begin{remark}
In addition to the velocity, two other crucially important quantities in the Navier--Stokes and Euler equations are the strain and the vorticity. The strain is the symmetric part of the gradient, and is given by
$S_{ij}=\frac{1}{2}\left(\partial_i u_j+\partial_j u_i\right)$.
The vorticity is the curl of the velocity, $\omega=\nabla \times u,$ and is a vector representation of the anti-symmetric part of the gradient,
$A_{ij}=\frac{1}{2}\left(\partial_i u_j+\partial_j u_i\right),$ with
\begin{equation}
        A=\frac{1}{2}
    \left( \begin{array}{ccc}
             0& \omega_3 & -\omega_2  \\
             -\omega_3& 0 & \omega_1 \\
             \omega_2 & -\omega_1 &0
        \end{array}
        \right).
\end{equation}
Physically speaking, the vorticity represents the rotation induced by the fluid flow at a point, while the strain represents the deformation induced by the flow.
We will note that this is consistent with how we have defined $S$ and $\omega$ in the proof above in a way that is particular to spatially linear solutions of the Navier--Stokes equation, because when $u(x)=Mx,$ then
\begin{equation}
    \nabla u=M^{tr},
\end{equation}
and so it is consistent with the general definition that
$S=\frac{1}{2}\left(M+M^{tr}\right)$ and that
$A=\frac{1}{2}\left(-M+M^{tr}\right)$.
Note that the consistency of the definition of $\omega$ then also follows from the consistency of $A$. We also note here that is is straightforward to express $u$ in terms of $S$ and $\omega$ by
\begin{equation}
    u(x,t)=S(t)x+\frac{1}{2}\omega(t)\times x.
\end{equation}
\end{remark}

\begin{remark}
The evolution equation for the strain is given by
\begin{equation} \label{ClassicStrain}
    \partial_t S-\nu\Delta S+(u\cdot\nabla)S+S^2
    +\frac{1}{4}\omega\otimes\omega-\frac{1}{4}|\omega|^2I_3
    +\Hess(p)=0,
\end{equation}
and evolution equation for the vorticity is given by
\begin{equation} \label{ClassicVort}
    \partial_t\omega -\nu\Delta\omega 
    +(u\cdot\nabla)\omega-S\omega=0.
\end{equation}
Noting that vorticity and strain are constant in space for spatially linear solutions of the Navier--Stokes equation, it is clear that there is no advection or viscous dissipation of the strain or vorticity as 
$-\Delta S, -\Delta \omega, (u\cdot\nabla)S, (u\cdot\nabla)\omega=0$.
Further observing that 
\begin{align}
    p
    &=
    -\frac{1}{6}\tr\left(M^2\right)|x|^2 \\
    &=
    \left(-\frac{1}{6}|S|^2+\frac{1}{12}|\omega|^2\right)|x|^2 \\,
\end{align}
we find that 
\begin{equation}
    \Hess(p)= 
    \left(-\frac{1}{3}|S|^2+\frac{1}{6}|\omega|^2\right) I_3.
\end{equation}
Therefore we can see that for the case of spatially linear solutions of the Navier--Stokes equation, solutions of the strain-vorticity pair equation equation satisfy the evolution equations \eqref{ClassicStrain} and \eqref{ClassicVort} with the choice of pressure indicated.
\end{remark}

\section{Statements of the main results}

In this section, we will state the main results of the paper and discuss their relationship to the literature for the general case of strong solutions of the Navier--Stokes and Euler equations, for which the spatially linear solutions are a toy model. We will begin by considering a class of blowup solutions that preserves its geometric structure with the dynamics.

\begin{theorem} \label{BlowupFamIntro}
Suppose we have initial data
\begin{equation}
    S^0=
    \left(
    \begin{array}{ccc}
         -(r+1)\lambda_0& 0 & 0  \\
          0& r \lambda_0 &0 \\
          0 &0 &\lambda_0 
    \end{array}
    \right)
\end{equation}
and 
\begin{equation}
    \omega^0= 
    \left(
    \begin{array}{c}
         0  \\
         2k\lambda_0 \\
         0
    \end{array}
    \right),
\end{equation}
where $\lambda_0>0, -\frac{1}{2}\leq r\leq 1,$ and
$k^2=1+r-2r^2$.
If we let
\begin{equation}
    \partial_t\lambda=r\lambda^2,
\end{equation}
then the pair $(S,\omega)$ given by 
\begin{equation}
    S(t)=
    \left(
    \begin{array}{ccc}
         -(r+1)\lambda(t)& 0 & 0  \\
          0& r \lambda(t) &0 \\
          0 &0 &\lambda(t)
    \end{array}
    \right)
\end{equation}
and
\begin{equation}
        \omega(t)= 
    \left(
    \begin{array}{c}
         0  \\
         2k\lambda(t) \\
         0
    \end{array}
    \right)
\end{equation}
is a solution to the strain-vorticity pair equation
with initial data $\left(S^0,\omega^0\right)$.

The general form for this solution is given by
\begin{equation}
    \lambda(t)=\frac{\lambda_0}{1-r\lambda_0 t}.
\end{equation}
When $r>0$ this solution blows up in finite time,
$T_{max}= \frac{1}{r\lambda_0}$.
When $r=0$, this is a stationary solution, and hence $T_{max}=+\infty.$
When $r<0, T_{max}=+\infty$ and this solution decays at infinity proportional to $\frac{1}{t}$.
\end{theorem}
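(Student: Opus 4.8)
The plan is to treat this as a direct verification: one substitutes the proposed diagonal ansatz into the two equations of Definition \ref{StrainVortPairDef} and checks that every entry vanishes, and then integrates the scalar ODE for $\lambda$ to extract the blowup behavior. The design of the ansatz is what makes this tractable: $S(t)$ is diagonal, so $S^2$, $|S|^2$, and $\partial_t S$ are all diagonal and immediate to compute, while $\omega(t)$ is chosen to point along $e_2$, which is precisely an eigenvector of $S(t)$ with eigenvalue $r\lambda(t)$. Note also that $\tr(S) = (-(r+1)+r+1)\lambda = 0$, so the trace-free constraint on $S$ is preserved for all $t$.

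First I would dispatch the vorticity equation $\partial_t\omega - S\omega = 0$, which is immediate from the alignment: since $\omega = 2k\lambda\,e_2$ and $Se_2 = r\lambda\,e_2$, we have $S\omega = r\lambda\,\omega$, while $\partial_t\omega = 2k(\partial_t\lambda)e_2 = 2kr\lambda^2 e_2 = r\lambda\,\omega$, so the two terms cancel identically for every value of $k$. The strain equation is where the constraint actually enters. Since all the matrices involved are diagonal, the matrix equation reduces to three scalar equations along the diagonal. I would compute $|S|^2 = \tr(S^2) = (2r^2+2r+2)\lambda^2$, $\tfrac14\omega\otimes\omega = \mathrm{diag}(0,k^2\lambda^2,0)$, and $\tfrac{1}{12}|\omega|^2 = \tfrac{k^2}{3}\lambda^2$, and then, dividing through by $\lambda^2$, check that the $(1,1)$ and $(3,3)$ entries both reduce to $(r+1)-\tfrac13\big((2r^2+2r+2)+k^2\big)$ while the $(2,2)$ entry reduces to $2r^2 - \tfrac13(2r^2+2r+2) + \tfrac23 k^2$. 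The point I would emphasize is that all three of these expressions vanish exactly when $k^2 = 1+r-2r^2$; this single algebraic identity is what forces the ansatz to close, and it is not accidental but rather the defining relation placed in the hypotheses.

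Finally I would solve $\partial_t\lambda = r\lambda^2$ with $\lambda(0)=\lambda_0$ by separation of variables to obtain $\lambda(t) = \lambda_0/(1-r\lambda_0 t)$, and then read off the three regimes from the sign of $r$: for $r>0$ the denominator vanishes at $t = 1/(r\lambda_0)>0$, giving finite-time blowup; for $r=0$ the ODE gives $\lambda\equiv\lambda_0$, a stationary solution; and for $r<0$ the denominator stays positive for all $t\ge 0$ with $\lambda(t)\sim 1/(|r|t)$, yielding global existence and decay like $1/t$. The only genuinely subtle point in the whole argument is a consistency check rather than a true obstacle: one must observe that the admissible range $-\tfrac12 \le r \le 1$ is exactly the set on which $1+r-2r^2 \ge 0$, so that $k$ is real and $\omega^0$ is a genuine (real) vorticity vector; outside this range the prescribed $k$ would be imaginary and the ansatz meaningless.
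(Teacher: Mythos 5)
Your proposal is correct and follows essentially the same route as the paper: direct substitution of the diagonal ansatz into the strain-vorticity pair equation, verification that the constraint $k^2=1+r-2r^2$ makes each diagonal entry vanish, and separation of variables for $\partial_t\lambda=r\lambda^2$. Your observation that the vorticity equation closes because $e_2$ is an eigenvector of $S$ with eigenvalue $r\lambda$, and your check that $-\tfrac12\le r\le 1$ is exactly the range where $1+r-2r^2\ge 0$, are nice clarifying touches but do not change the substance of the argument.
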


\begin{remark}
Applying Proposition \ref{StrainVortEquivalence},
we can see that the solutions of the spatially linear Navier--Stokes equation corresponding to the solutions of the strain-vorticity pair equation in Theorem \ref{BlowupFam}
are given by
\begin{equation}
    u(x,t)=M(t)x,
\end{equation}
where
\begin{equation}
    M(t)=\frac{\lambda_0}{1-r\lambda_0 t}
    \left(\begin{array}{ccc}
         -(1+r)& 0 & k  \\
         0& r & 0 \\
         -k &0 & 1
    \end{array}
    \right),
\end{equation}
where we again have $-\frac{1}{2}\leq r\leq 1$ and
$k^2=1+r-2r^2$.

\end{remark}

\begin{remark} \label{VortAlign}
Note that because $-\frac{1}{2}\leq r\leq 1$, we can see that
$-(r+1)\leq r \leq 1$. This means that if we let
\begin{align}
    \lambda_1 &=-(r+1)\lambda \\
    \lambda_2 &= r\lambda \\
    \lambda_3&= \lambda,
\end{align}
then we will have 
$\lambda_1\leq \lambda_2\leq \lambda_3$.
We have parametrized the entire system in terms of $\lambda_3$, because $\lambda_3$ is always positive unless $S$ is identically zero; however, when $\lambda_2>0$, we can just as easily parametrize the system by $\lambda_2$. If we do that, we get the following ODE governing blowup:
\begin{align}
    \partial_t\lambda_2
    &=
    \partial_t r\lambda \\
    &= r^2\lambda^2 \\
    &=\lambda_2^2.
\end{align}
This means that there is only blowup when $\lambda_2>0$, and the size of $\lambda_2$ completely dictates the rate of blowup with 
\begin{equation}
    T_{max}=\frac{1}{\lambda_2(0)}.
\end{equation}

This is consistent with earlier work on mild solutions of the Navier--Stokes equation proving scale critical regularity criteria on $\lambda_2^+$, the positive part of the middle eigenvalue of the strain \cites{NeuPen1,NeuPen2,MillerStrain}. In these work scale critical norms applied to $\lambda_2^+$ controlled regularity, and correspondingly in this simplified model, there is blowup when $\lambda_2(0)$ is positive, and the size of $\lambda_2(0)$ alone dictates the rate of blowup.
We should note that physically speaking two positive eigenvalues and one negative eigenvalue corresponds to planar stretching, and axial compression, with the axial compression being stronger in magnitude, because $\lambda_1=-\lambda_2-\lambda_3$.

It is also worth noting that in the family of solutions we have described, the vorticity aligns entirely with the eigenvector corresponding middle eigenvalue of the strain matrix. This is consistent with previous work suggesting a tendency of the vorticity to align with the eigenvector corresponding to the middle eigenvalue \cite{VortAlignment}, and suggests that there may be a deep relationship between the alignment of the vorticity with the middle eigenvector and the possibility of unbounded planar stretching as a mechanism for blowup.

Finally, we should mention that these blowup solutions are also consistent with work on the pressure by Seregin and S\v{v}er\'ak \cite{SereginSverakPressure}. Seregin and S\v{v}er\'ak showed that in order for a solution of the Navier--Stokes equation to blowup in finite-time, then $p$ must become unbounded below and $p+\frac{1}{2}|u|^2$ must become unbounded above. We will discuss this further in section \ref{Lagrangian}.
\end{remark}

\begin{theorem} \label{MainTheorem}
Suppose a strain $S\in C^1(\left[0,T_{max};
\mathcal{S}^{3\times 3}\right), \tr(S)=0$, 
and a vorticity $\omega \in 
C^1(\left[0,T_{max};\mathbb{R}^{3}\right)$,
is a solution of the strain-vorticity pair equation 
with initial data
\begin{equation}
    S^0=
    \left(
    \begin{array}{ccc}
         -(r_0+1)\lambda_0& 0 & 0  \\
          0& r_0 \lambda_0 &0 \\
          0 &0 &\lambda_0 
    \end{array}
    \right)
\end{equation}
and 
\begin{equation}
    \omega^0= 
    \left(
    \begin{array}{c}
         0  \\
         2k_0\lambda_0 \\
         0
    \end{array}
    \right),
\end{equation}
with $\lambda_0>0$ and $k_0^2\neq 1+r_0-2r_0^2$.
Then the pair $S,\omega\in C^1\left([0,T_{max})\right)$
is a solution of the strain-vorticity pair equation with initial data $S^0,\omega^0$ where
the strain is given by
\begin{equation}
    S(t)=
    \left(
    \begin{array}{ccc}
         -(r(t)+1)\lambda(t)& 0 & 0  \\
          0& r(t) \lambda(t) &0 \\
          0 &0 &\lambda(t) 
    \end{array}
    \right),
\end{equation}
and a vorticity given by
\begin{equation}
    \omega(t)= 
    \left(
    \begin{array}{c}
         0  \\
         2k(t)\lambda(t) \\
         0
    \end{array}
    \right),
\end{equation}
and $r,k,\lambda\in C^1\left([0,T_{max})\right)$ satisfy
\begin{align}
    \partial_t\lambda &=
    \frac{1}{3}\left(-1+2r+2r^2+k^2\right) \lambda^2 \\
    \partial_t r &=
    \frac{1}{3}\lambda\left(2+3r-3r^2-2r^3-(r+2)k^2\right) \\
    \partial_t k &=\frac{1}{3}\lambda (1+r-2r^2-k^2)k.
\end{align}

For all $r_0\neq -2$, let 
\begin{equation}
    m_0=\frac{k_0}{r_0+2},
\end{equation}
and if $-\frac{1}{2}<m_0<\frac{1}{2},$ then let
\begin{equation}
    r_\infty=\frac{1-4m_0^2+3\sqrt{1-4m_0^2}}{2m_0^2+4},
\end{equation}
and let
\begin{equation}
    k_\infty= m_0
    \left(\frac{9+3\sqrt{1-4m_0^2}}{2m_0^2+4}\right).
\end{equation}
Furthermore, we will let
\begin{equation}
    g(t)=1+r(t)-2r(t)^2-k(t)^2
\end{equation}
The properties of the solutions for this initial data are as follows:
\begin{enumerate}
    \item 
If $r_0>0$ and
$1+r_0-2r_0^2<k_0^2<\frac{1}{4}(r_0+2)^2,$
then $T_{max}<+\infty$ and 
\begin{align*}
    \lim_{t\to T_{max}} r(t)&=r_\infty, \\
    \lim_{t\to T_{max}}k(t)&=k_\infty, \\
    \lim_{t\to T_{max}}g(t)&=0, \\
    \lim_{t\to T_{max}}\lambda(t)&=+\infty.
\end{align*}
\item
If $-\frac{1}{2}<r_0< 1$ and 
$k_0^2<1+r_0-2r_0^2$,
then $T_{max}<+\infty,$ and
\begin{align*}
    \lim_{t\to T_{max}} r(t)&=r_\infty, \\
    \lim_{t\to T_{max}}k(t)&=k_\infty, \\
    \lim_{t\to T_{max}}g(t)&=0, \\
    \lim_{t\to T_{max}}\lambda(t)&=+\infty.
\end{align*}
\item 
If $r_0>0$ and $k_0^2=\frac{1}{4}(r_0+2)^2$,
then $T_{max}=+\infty$, and
\begin{align*}
    \lim_{t\to +\infty} r(t)&=0, \\
    \lim_{t\to +\infty}k(t)&=1, \\
    \lim_{t\to +\infty}g(t)&=0.
\end{align*}
There is, however, blowup at infinity with
\begin{equation*}
    \lim_{t\to +\infty}\lambda(t)=+\infty.
\end{equation*}
\item
If $r_0>0, k_0^2>\frac{1}{4}(r_0+2)^2$,
then $T_{max}<+\infty,$ and
\begin{align*}
    \lim_{t\to T_{max}} r(t)&=-2, \\
    \lim_{t\to T_{max}}k(t)&=0, \\
    \lim_{t\to T_{max}}\lambda(t)&=+\infty.
\end{align*}
\item
If $-\frac{1}{2}\leq r_0\leq 0$ and $k_0^2>1+r_0-2r_0^2$,
then $T_{max}<+\infty,$ and
\begin{align*}
    \lim_{t\to T_{max}} r(t)&=-2, \\
    \lim_{t\to T_{max}}k(t)&=0, \\
    \lim_{t\to T_{max}}\lambda(t)&=+\infty.
\end{align*}

\item
If $r_0< -\frac{1}{2}$,
then $T_{max}<+\infty,$ and
\begin{align*}
    \lim_{t\to T_{max}} r(t)&=-2, \\
    \lim_{t\to T_{max}}k(t)&=0, \\
    \lim_{t\to T_{max}}\lambda(t)&=+\infty.
\end{align*}
\end{enumerate}
\end{theorem}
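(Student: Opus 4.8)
The plan is to first confirm that the stated diagonal/axial ansatz is genuinely invariant under the strain--vorticity pair equation, and then to reduce the whole problem to a one-dimensional autonomous phase line. Substituting $S=\operatorname{diag}\!\big(-(r+1)\lambda,\,r\lambda,\,\lambda\big)$ and $\omega=(0,2k\lambda,0)^{tr}$ into the pair equation, I would observe that $S^2$, $\omega\otimes\omega$ and $|\omega|^2 I_3$ are all diagonal and that $S\omega$ is again a multiple of the second coordinate vector; hence the off-diagonal strain entries and the first and third vorticity components remain zero, so the ansatz is preserved and the matrix equation collapses to scalar equations. Matching the $(3,3)$ diagonal entry of the strain equation yields the $\lambda$ equation, the $(2,2)$ entry (after subtracting $r\,\partial_t\lambda$) yields the $r$ equation, and the second component of the vorticity equation (after subtracting $k\,\partial_t\lambda$) yields the $k$ equation. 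Local existence and uniqueness of $(r,k,\lambda)\in C^1$ is immediate since the right-hand sides are polynomial, and since $\partial_t\lambda$ is proportional to $\lambda^2$ with $\lambda_0>0$, $\lambda$ stays strictly positive on its interval of existence. Using the $\omega\mapsto-\omega$ symmetry I may assume $k_0>0$, so $k>0$ throughout.

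The crucial step is an algebraic factorization. Writing $g=1+r-2r^2-k^2$ and factoring $2+3r-3r^2-2r^3=(r+2)(1+r-2r^2)$, the $r$ and $k$ equations become $\partial_t r=\tfrac13\lambda(r+2)g$ and $\partial_t k=\tfrac13\lambda k g$. From these, $\partial_t\!\big(k/(r+2)\big)=0$, so $m:=k/(r+2)$ is a conserved quantity (well defined because trajectories reach $r=-2$ only in the limit $t\to T_{max}$; the fiber $r\equiv-2$ is handled directly, where $g=-9-k^2<0$ drives $k\to0$). Along any trajectory $k=m_0(r+2)$, so $g$ becomes the single quadratic $g(r)=(1-4m_0^2)(1+r)-(2+m_0^2)r^2$. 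Reparametrizing time by $d\tau=\tfrac13\lambda\,dt$ (legitimate since $\lambda>0$) then turns the dynamics into the autonomous scalar equation $dr/d\tau=(r+2)g(r)$.

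I would then read off the asymptotics from this phase line. Its fixed points are $r=-2$ and the two roots of $g$, namely $r_{\pm}=\big((1-4m_0^2)\pm 3\sqrt{1-4m_0^2}\big)/(2(2+m_0^2))$, which are real precisely when $|m_0|\le\tfrac12$ and satisfy $r_-\in[-\tfrac12,0)$, $r_\infty:=r_+\in(0,1]$, while $g(-2)=-9<0$ places $-2$ strictly to the left of $r_-$. Since $g$ is a downward parabola, $r_\infty$ is an attractor, $r_-$ a repeller, and $-2$ an attractor for the region $r<r_-$; when $|m_0|>\tfrac12$ the parabola has no real root and is everywhere negative, so $-2$ is the only attractor. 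Translating each of the six hypotheses into the sign of $g_0$ (equivalently the position of $r_0$ relative to $r_-,r_\infty$) together with the size of $|m_0|$ gives the stated limits of $r$, and via $k=m_0(r+2)$ the limits of $k$ and of $g$; the identity $k_\infty=m_0(r_\infty+2)=m_0\big(9+3\sqrt{1-4m_0^2}\big)/(2m_0^2+4)$ is a direct computation. I would check that $r$ stays monotone in a compact interval between $r_0$ and the relevant fixed point, so the limits exist and the trajectory is defined up to $T_{max}$.

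Finally, to obtain the behavior of $\lambda$ and the dichotomy for $T_{max}$, I note that $-1+2r+2r^2+k^2=3r-g$, so $\partial_t\lambda=(r-\tfrac{g}{3})\lambda^2$ and, after passing to $\tau$, $d\ln\lambda/d\tau=3r-g$. Dividing by $dr/d\tau$ gives the separable relation $\tfrac{d\ln\lambda}{dr}=\tfrac{3r-g}{(r+2)g}$, a rational function of $r$ integrable by partial fractions, whose behavior near the limiting value $r_*$ shows $\lambda\to+\infty$ in every case. For $T_{max}$ I would use $dt=\tfrac{3}{\lambda(r+2)g}\,dr$ and test convergence of this integral as $r\to r_*$: at a simple root $r_*=r_\infty$, or at $r_*=-2$, the integrand is integrable and $T_{max}<\infty$. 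The delicate case is exactly case (3), where $|m_0|=\tfrac12$ forces $g(r)=-\tfrac94 r^2$ to have a \emph{double} root at $0$, so $\lambda\sim r^{-2/3}$ and the time integral behaves like $\int r^{-4/3}\,dr$, which diverges, yielding blowup at infinity with $T_{max}=+\infty$. This threshold phenomenon---the double root pushing the time integral from convergent to divergent---is the main obstacle and the only place where the soft phase-line argument does not by itself settle the conclusion.
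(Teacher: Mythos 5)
Your proposal is correct, and after the shared preliminaries (deriving the parameter ODEs and the conserved ratio $m_0=k_0/(r_0+2)$, which are the paper's Propositions \ref{ParameterODE} and \ref{PhaseSpace}) it diverges substantially from the paper's route. The paper stays in the physical time variable and, case by case, proves monotonicity of $r$ via integrated differential inequalities, rules out $T_{max}=+\infty$ by contradiction using a logarithmic lower bound on $\int_0^t\lambda$, and extracts the limits $r\to r_\infty$ or $r\to -2$ by applying Taylor's theorem to $B(t)=\int_t^{T_{max}}b(\tau)\,\diff\tau$; cases 5 and 6 are then waved off as ``analogous.'' You instead reparametrize time by $\diff\tau=\tfrac13\lambda\,\diff t$, reduce everything to the autonomous phase line $\diff r/\diff\tau=(r+2)g(r)$ with $g(r)=(1-4m_0^2)(1+r)-(2+m_0^2)r^2$, read off all six cases at once from the attractor/repeller structure of the roots $r_\pm=\frac{(1-4m_0^2)\pm3\sqrt{1-4m_0^2}}{2m_0^2+4}$ and of $r=-2$, and settle the $T_{max}$ dichotomy by testing convergence of $\int \frac{3\,\diff r}{\lambda(r+2)g}$ via the separable relation $\diff\ln\lambda/\diff r=(3r-g)/((r+2)g)$. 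What your approach buys is uniformity (the omitted cases 5 and 6 come for free) and a transparent explanation of why case 3 is the unique non-generic case: the double root of $g$ at $r=0$ when $m_0^2=\tfrac14$ is exactly what pushes the time integral from convergent to divergent. Two small remarks. First, you should state explicitly that the time change maps $[0,T_{max})$ onto all of $[0,+\infty)$ in $\tau$ (this follows since $r$ stays in a compact interval and $\diff\ln\lambda/\diff\tau=3r-g$ is bounded there, so the $\tau$-system is global), as otherwise the phase-line limits as $\tau\to+\infty$ need not be the limits as $t\to T_{max}$. Second, your case-3 exponent $\lambda\sim r^{-2/3}$ is in fact the correct one: the paper's Lemma \ref{BoundaryCaseLemma} simplifies $\tfrac13\left(3r+\tfrac94 r^2\right)$ to $r\left(r+\tfrac34\right)$ rather than $r\left(1+\tfrac34 r\right)$ and so arrives at $\lambda= c_0 r^{-1/2}(r+2)^{-5/6}$ where the correct conserved quantity is $\lambda\, r^{2/3}(r+2)^{1/3}$; the discrepancy does not affect the qualitative conclusion that $T_{max}=+\infty$ with blowup at infinity, but your computation is the one to trust.
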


\begin{remark} \label{StabilityOfBlowup}
We will note that it is easy to compute that 
\begin{equation}
    k_\infty^2=1+r_\infty-2r_\infty^2,
\end{equation}
so the family of blowup solutions in Theorem \ref{BlowupFam}
where $0<r\leq 1$ and $k^2=1+r-2r^2$
is stable under small perturbations of the parameters $r,k$.
Furthermore, the set of initial data covered in conditions $1$ and $2$ of Theorem \ref{MainTheorem} approach the family of blowup solutions in Theorem \ref{BlowupFamIntro}
as $t\to T_{max}$. 

The family of solutions with global regularity and decay at infinity in Theorem \ref{BlowupFam},
the case where $-\frac{1}{2}<r<0$,
are likewise unstable with respect small perturbations of the parameters $r,k$. The stationary solution, the case where $r=0$, is also unstable. This solution can, however, be seen as an attractor for the borderline case where $r>0, k^2=\frac{1}{4}(r+2)^2$. In this case, there is a global smooth solution. This solution blows up at infinity, so it does not approach any specific stationary solution, but it blows up quite slowly---and hence at infinity and not in finite-time---precisely because $k\to 1, r\to 0$ as $t\to +\infty$, which means the parameters are approaching the stationary solution.
\end{remark}

\begin{remark}
In \cite{MillerStrain}, the author introduced the matrix ODE toy model for the strain equation
\begin{equation} \label{StrainODE}
    \partial_t S+S^2-\frac{1}{3}|S|^2I_3=0.
\end{equation}
It is easy to check that this equation is a special case of the strain-vorticity pair equation where vorticity is zero and also, equivalently, a special case of the matrix Navier--Stokes equation restricted to symmetric matrices.
The equation \eqref{StrainODE} was introduced as a toy model ODE for the strain equation, and it was not clear at this time that solutions of this equation yielded classical solutions to the Navier--Stokes equation with linear growth at infinity.

The author showed in \cite{MillerStrain}, that for initial data
\begin{equation}
    S^0=
    \left(
    \begin{array}{ccc}
         -(r_0+1)\lambda_0& 0 & 0  \\
          0& r_0 \lambda_0 &0 \\
          0 &0 &\lambda_0 
    \end{array}
    \right),
\end{equation}
if $\lambda_0>0$ and $-\frac{1}{2}<r_0\leq 1$, then
$T_{max}<+\infty$
and
\begin{equation}
    \lim_{t \to T_{max}} r(t)=1.
\end{equation}
This results is in fact a special case of Theorem \ref{MainTheorem}, condition 2, when we have $k_0=0$. Note that if $k_0=0$, then $m=0,$ and hence $k_\infty=0$, and $r_\infty=1$.
\end{remark}

\begin{remark}
There is a growing body of research suggesting that the self-amplification of strain is the dominant factor in the growth of enstrophy \cite{MillerStrainToyModel} 
and in the turbulent energy cascade \cites{Tsinober,CarboneBragg}, 
and is in fact more important than vortex stretching which had long been assumed to play the main role.
For mild solutions of the Navier--Stokes equation in $H^1$, the nonlinearity for the growth of enstrophy can be formulated equivalently in terms of the self-amplification of strain, given by $-4\int\det(S)$ and the vortex stretching, given by $\left<S,\omega\otimes\omega\right>$. While these formulations are equivalent however, the self-amplification of strain is more useful as it is entirely local, whereas vortex stretching requires a Riesz-type transform to get $S$ from $\omega$. See \cites{MillerStrain} for more details.

For this reason, it is interesting that the blowup solutions of the strain-vorticity pair equation remain stable in the presence of vorticity. This is particularly important given that any nonzero, finite-energy fluid cannot be irrotational, so any finite-time blowup for the Navier--Stokes equation in $H^1$ would have to be in the presence of vorticity. The interaction of the strain and vorticity may dampen the rate of quadratic blowup, noting in particular for the family of blowup solutions in Theorem \ref{BlowupFam} that $r<1$ when $k>0,$ but there is no depletion of nonlinearity, as we still have
\begin{equation}
    \partial_t\lambda=r\lambda^2.
\end{equation}
This suggests that any depletion of nonlinearity cannot come from the interaction of strain and vorticity and must come from other effects, particularly advection.
\end{remark}

\begin{remark}
This work does not, of course, resolve the Millennium Problem for the Navier--Stokes equations. While the blowup solutions we have exhibited are smooth, the fact that they are unbounded at infinity, and hence have infinite energy and are not in any Lebesgue or Sobolev space means that these blowup solutions can really only be seen as a toy model for Navier--Stokes. There is no natural way to extend the blowup for these solutions to blowup for mild solutions, where there must be decay at infinity. This is true in particular because the direction of the vorticity is constant for the class of solutions dealt with in Theorems \ref{BlowupFamIntro} and \ref{MainTheorem}, and the classical result of Constantin and Fefferman \cite{ConstantinFefferman} shows that the vorticity direction must vary rapidly in order for there to be blowup for mild solutions of the Navier--Stokes equation. Any method of trying to impose decay on the blowup solutions derived in this paper will necessarily run into enormous technical difficulties, as the non-locality of the equations can no longer be ignored.

The advantage of the toy model in this paper when compared with other toy models, is that the solutions are actually classical solutions of the Navier--Stokes equation. The general approach in proving finite-time blowup for model equation for Navier--Stokes is to relax the equation, but preserve the key function spaces
\cites{MillerStrainToyModel,TaoModel,MontgomerySmith,GallagherPaicu}. 
Here we have taken the opposite approach, relaxing the function space requirements, but keeping the equations intact. Insofar as the goal of Navier--Stokes analysis is to gain further mathematical insight into the physics of incompressible flow, this has certain benefits. Even though the equation reduces to an ODE, certain fundamental features of actual turbulent flows and the possible blowup of strong solutions are captured by this model, 
as we described in Remark \ref{VortAlign}. 
\end{remark}

\begin{remark}
Theorem \ref{MainTheorem} shows that the set of blowup solutions from Theorem \ref{BlowupFamIntro} are stable, while the the set of global solutions is unstable.
This is highly significant because,
while the solutions to the Navier--Stokes equations described in Theorem \ref{MainTheorem} may have issues globally in space due to their linear growth at infinity, these solutions are perfectly well behaved locally.
The family of blowup solutions in Theorem \ref{BlowupFamIntro}, and the solutions in Theorem \ref{MainTheorem} that approach the this family of blowup solutions as $t\to T_{max}$, do not exhibit any depletion of nonlinearity, and have fully quadratic blowup. This suggests that any depletion of nonlinearity for the Navier--Stokes equations relies strongly on nonlocal effects.

For strong solutions of the Navier--Stokes equation, we can invert either the strain or the vorticity to get the velocity, and therefore the strain and vorticity entirely determine each other in terms of zero order pseudo-differential operators \cite{MillerStrain}  with
\begin{align}
    S&=\nabla_{sym} \nabla \times
    (-\Delta)^{-1}\omega \\
    \omega &= -2 \nabla \times \divr
    (-\Delta)^{-1}S.
\end{align}
Because the vorticity and strain are just the symmetric and anti-symmetric parts of a generic trace free matrix in this set up, the matrix $M$ and the velocity $u=Mx$ cannot be recovered from just $S$ or $\omega$. The mutual dependence of $S$ and $\omega$ is nonlocal, and so the growth at infinity allows $S$ and $\omega$ to uncouple.
\end{remark}

\begin{remark}
There is a large body of research showing that advection plays a regularizing role in the equations of fluid dynamics and related models
\cites{DiegoCordoba,ElgindiAdvection,Sverak1DModels,HouLei}.
We should note that because $S$ and $\omega$ are constant in space, there is no advection of either strain or vorticity, as we have
\begin{align}
    (u\cdot\nabla)S&=0 \\
    (u\cdot\nabla ) \omega &=0.
\end{align}
This means that advection cannot play a regularizing role for the spatially linear Navier--Stokes equation, and so finite-time blowup for this equation is consistent with earlier works on the regularizing role of advection. This work suggests that the quadratic nonlinearities in the strain and vorticity are ``trying" to form coherent coherent structures that would lead to finite-time blowup, but that, for the full Navier--Stokes problem with a finite-energy constraint and decay at infinity, the large velocities generated may advect away such structures before they can blowup the equation. If there is a depletion of nonlinearity this is the most likely source, as the strain-vorticity interaction does not seem to lead to depletion of nonlinearity.
\end{remark}

\begin{remark}
Spatially linear solutions of the Navier--Stokes equation are not unique in the class of classical solutions to the Navier--Stokes equation.
For example take 
\begin{equation}
    u^0(x)=\left(\begin{array}{c}
         -2x_1  \\
         x_2 \\
         x_3
    \end{array}\right).
\end{equation}
We can see from Theorem \ref{BlowupFam}, that the solution of the spatially linear Navier--Stokes equation with this initial data is given by
\begin{equation}
    u(x,t)= \frac{1}{1-t}
    \left(\begin{array}{c}
         -2x_1  \\
         x_2 \\
         x_3
    \end{array}\right),
\end{equation}
with the corresponding pressure given by
\begin{equation}
    p(x,t)=-\frac{1}{1-t}|x|^2.
\end{equation}
This is not however, the only classical solution of the Navier--Stokes equation with this initial data.
In fact, with a different choice of the pressure,
\begin{equation}
    p(x)=-2x_1^2-\frac{1}{2}x_2^2-\frac{1}{2}x_3^2,
\end{equation}
we have that
\begin{equation}
    u(x)=\left(\begin{array}{c}
         -2x_1  \\
         x_2 \\
         x_3
    \end{array}\right)
\end{equation}
is a stationary solution of the Navier--Stokes equation.
More generally if we take a pressure of the form
\begin{equation}
    p(x)=-2x_1^1-\frac{1}{2}x_2^2-\frac{1}{2}x_3^2
    -f(t)\left(-x_1^2+\frac{1}{2}x_2^2+\frac{1}{2}x_3^2
    \right),
\end{equation}
where $f$ is a generic smooth function of time,
and we let $\partial_t \lambda=f,$ with $\lambda(0)=1$, then 
\begin{equation}
    u(x,t)=\lambda(t)
    \left(\begin{array}{c}
         -2x_1  \\
         x_2 \\
         x_3
    \end{array}\right)
\end{equation}
is a classical solution of the Navier--Stokes equation
with initial data $u^0$.
This makes it clear that classical solutions with linear growth at infinity are not only not unique, but can evolve essentially in any specified way, either growing, or shrinking, at any specified rate.

The source of this nonuniqueness is clearly the pressure. For $H^1$ solutions of the Navier--Stokes equations, the pressure is uniquely determined by the velocity by inverting the Laplacian with
\begin{align}
    -\Delta p
    &=
    \sum_{i,j=1}^3 \frac{\partial u_j}{\partial x_i}
    \frac{\partial u_i}{\partial x_j} \\
    &=
    |S|^2-\frac{1}{2}|\omega|^2.
\end{align}
For solutions with linear growth at infinity, we lose this unique determination of the pressure by the velocity, because we can no longer find a unique solution to Poisson's equation.
Nonetheless, spatially linear solutions of the Navier--Stokes equation are unique, because in requiring that $u(x,t)=M(t)x$ where
\begin{equation}
    \partial_t M+M^2-\frac{1}{3}\tr\left(M^2\right)=0,
\end{equation}
we have fixed pressure to be 
\begin{equation}
    p(x,t)=-\frac{1}{6}\tr\left(M(t)^2\right)|x|^2.
\end{equation}
While this is not the only choice of pressure, because our solution has a linear dependence on the spatial variable, the pressure can be seen as canonical because it is a radial function.
It is clear that the pressure function must be quadratic (note the above examples), because $\nabla p$ must be linear in order to preserve the linear structure of our equation.
Taking the pressure to be the quadratic, radial function that maintains the divergence free constraint is the natural canonical choice, and so in this sense, the solution given by Definition \ref{MatrixDef} 
for $u^0$ is the ``correct" solution.
\end{remark}

\begin{remark}
There are previous examples of infinite energy blowup solutions for the Euler equation \cite{GibbonInfiniteEnergy} and the Navier--Stokes equation \cites{BurgersVortex,MMPnonlocal}.
In particular, Maekawa, Miura, and Prange \cite{BurgersVortex} consider the evolution of the Burgers vortex a uniform strain of the form
\begin{equation}
        S(t)=
    \lambda(t)\left(
    \begin{array}{ccc}
         -1 & 0 & 0  \\
          0& -1  &0 \\
          0 &0 &2 
    \end{array}
    \right),
\end{equation}
with $\lambda(t)>0$,
which is quite interesting because this has precisely the opposite sign as the blowup solutions dealt with in this paper. For solutions of the form dealt with in Theorem \ref{MainTheorem}, finite-time blowup requires that $\lambda_2(t)>0$ for some $t>0$.
It is not immediately clear what in the structure of Burgers vortex allows solutions of this form, even with infinite energy, to blowup, but the difference is quite striking.
The more general form of linear strain with stretching in one direction and compression in two directions, given by
\begin{equation}
            S(t)=
    \lambda(t)\left(
    \begin{array}{ccc}
         -1 & 0 & 0  \\
          0& -m  &0 \\
          0 &0 &1+m 
    \end{array}
    \right),
\end{equation}
with $\lambda(t),m\geq 0$
was also considered in \cite{Beronov}, but this paper does not deal with finite-time blowup.
\end{remark}

\begin{remark}
One advantage of the present work compared with previous works on infinite energy blowup solutions to the Euler and Navier--Stokes equations is that the strain structure given by
    \begin{equation}
            S(t)=
    \lambda(t)\left(
    \begin{array}{ccc}
         -(1+r) & 0 & 0  \\
          0& r  &0 \\
          0 &0 &1 
    \end{array}
    \right),
\end{equation}
with $\lambda(t),r>0$, corresponds to a linear flow with a stagnation point at $x=0$. Physically this flow involves colliding jets moving along the $x$ axis, and stretching in the $yz$ plane, albeit at different rates along the $y$ and $z$ axes when $0<r<1$. 
Colliding jets have long been considered possible candidates for blowup for the Euler and Navier--Stokes equations, and recently played a major role in Elgindi's proof of finite-time blowup for $C^{1,\alpha}$ solutions of the Euler equation \cite{ElgindiBlowup}. One difficulty in such a program for proving finite-time blowup is that if the colliding jets are axisymetric and swirl-free---the case in this paper when $r=1,\omega=0,$---then this rules out blowup for the Navier--Stokes equation, and also for the Euler equation with sufficient regularity. 
The introduction of vorticity in the $y$ direction allows us to break the axisymmetry, because in this case we now have solutions in Theorem \ref{BlowupFamIntro}, where $0<r<1,$ which means the radial component of the velocity now depends on $\theta$. This means such solutions could be good candidates for the blowup of strong solutions of the Euler equations, if sufficient decay at infinity could be imposed.
\end{remark}

\begin{remark}
We will note that in Theorem \ref{MainTheorem}, we restrict ourselves to considering initial data where the vorticity aligns entirely with one of the eigenvectors of the strain. In general, the vorticity does not have to align completely with a particular eigenvector of the strain.
We in fact have a more general theorem, showing that the family of blowup solutions in Theorem \ref{BlowupFamIntro} is stable under generic perturbations, not merely under perturbations of the parameters in which the vorticity aligns entirely with one of the eigenvectors of the strain.
\end{remark}

\begin{theorem} \label{GeneralCase}
Suppose we have initial data 
$M^0\in\mathbb{R}^{3\times 3},
\tr\left(M^0\right)=0$,
with eigenvalues such that 
$\operatorname{Re}(\lambda_1) <
\operatorname{Re}(\lambda_2) \leq 
\operatorname{Re}(\lambda_3)$, and Jordan normal form
\begin{equation}
    M^0= Q J Q^{-1}.
\end{equation}
Then the solution of the matrix Navier--Stokes equation
$M\in C^1\left([0,T_{max};\mathbb{R}^{3\times 3}\right)$
blows up in finite-time $T_{max}<+\infty$,
and has the form
\begin{equation}
    M(t) = Q J(t) Q^{-1},
\end{equation}
where 
\begin{equation}
    J(t) \sim \frac{1}{T_{max}-t} \left(
    \begin{array}{ccc}
        -2 & 0 & 0  \\
         0& 1& 0 \\
         0 & 0 & 1
    \end{array} \right)
\end{equation}
as $t \to T_{max}$.
\end{theorem}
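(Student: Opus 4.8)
The plan is to strip off the conjugation and reduce the entire problem to a planar autonomous ODE in two scalar invariants of $M$. Since both $\tr(M^2)$ and $I_3$ are conjugation invariant, if $J(t)$ solves the matrix Navier--Stokes equation then so does $QJ(t)Q^{-1}$; as the right-hand side $-J^2+\tfrac13\tr(J^2)I_3$ is polynomial, hence locally Lipschitz, uniqueness gives $M(t)=QJ(t)Q^{-1}$ with $J(0)=J$. It therefore suffices to analyze $J(t)$. Because the upper-triangular matrices form a linear subspace invariant under the flow, $J(t)$ stays upper-triangular, and its diagonal entries $\mu_1(t),\mu_2(t),\mu_3(t)$ are the eigenvalues of $M(t)$. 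I would then introduce $P=\tr(M^2)$ and $R=\tr(M^3)$; differentiating along the flow, using $\tr(M)=0$ and the Cayley--Hamilton identity $\tr(M^4)=\tfrac12 P^2$ valid for trace-free $3\times3$ matrices, yields the closed planar system
\begin{equation}
\partial_t P=-2R,\qquad \partial_t R=-\tfrac12 P^2,
\end{equation}
which admits the conserved quantity
\begin{equation}
H=2R^2-\tfrac13 P^3,
\end{equation}
proportional to the discriminant of the characteristic polynomial $\mu^3-\tfrac12 P\mu-\tfrac13 R$.

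Next I would carry out the phase-plane analysis. The only equilibrium is $(P,R)=(0,0)$, corresponding to a nilpotent $M$, which the strict inequality $\operatorname{Re}(\lambda_1)<\operatorname{Re}(\lambda_2)$ rules out. Since $\partial_t R=-\tfrac12 P^2\le 0$, $R$ is non-increasing (strictly unless $P=0$), so every admissible trajectory, confined to its level curve $2R^2=\tfrac13 P^3+H$, crosses into $R<0$ and thereafter has $\partial_t P=-2R>0$, escaping along the lower branch to $P\to+\infty$, $R\to-\infty$. On that branch $R\sim-\tfrac{1}{\sqrt6}P^{3/2}$, so $\partial_t P\sim\sqrt{2/3}\,P^{3/2}$, which blows up in finite time with
\begin{equation}
P(t)\sim\frac{6}{(T_{max}-t)^2},\qquad R(t)\sim-\frac{6}{(T_{max}-t)^3}.
\end{equation}
This gives both $T_{max}<+\infty$ and the sharp rate.

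To recover the limiting configuration I would use that for a trace-free $3\times3$ matrix the eigenvalues are determined as a multiset by $(P,R)$. Rescaling $\mu=\tilde\mu/(T_{max}-t)$ turns the characteristic polynomial into $\tilde\mu^3-\tfrac12 P(T_{max}-t)^2\tilde\mu-\tfrac13 R(T_{max}-t)^3\to\tilde\mu^3-3\tilde\mu+2=(\tilde\mu-1)^2(\tilde\mu+2)$, so by continuity of roots in the coefficients $(T_{max}-t)\mu_i(t)\to\{-2,1,1\}$. To assign these limits correctly I would invoke the difference identity $\partial_t(\mu_i-\mu_j)=\mu_k(\mu_i-\mu_j)$ with $\{i,j,k\}=\{1,2,3\}$, which shows each difference keeps its sign in the real case and a complex-conjugate pair stays off the real axis, so the ordering $\operatorname{Re}\mu_1<\operatorname{Re}\mu_2\le\operatorname{Re}\mu_3$ persists and forces $\mu_1\to-2\lambda$, $\mu_2,\mu_3\to\lambda$ with $\lambda=1/(T_{max}-t)$. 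In the genuinely complex case $\mu_{2,3}=a\pm bi$, boundedness of the conserved $H$ as $a\to\infty$ forces $b/a\to0$, so the imaginary part is negligible after rescaling and the limit is the real configuration $\operatorname{diag}(-2,1,1)$.

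Finally I would control the nilpotent part. The strict inequality confines any nontrivial Jordan block to the $\mu_2=\mu_3$ eigenspace, and a direct computation on $\partial_t J=-J^2+\tfrac13 P I_3$ shows $J_{12},J_{13}$ remain $0$ while the only surviving off-diagonal entry $c=J_{23}$ obeys $\partial_t c=-2\mu_2 c$; since $\int^t 2\mu_2\sim-2\log(T_{max}-t)$, one gets $c(t)=O((T_{max}-t)^2)$, negligible even before rescaling, so $(T_{max}-t)J(t)\to\operatorname{diag}(-2,1,1)$, which is the claimed asymptotics. The main obstacle I anticipate is making the convergence of the \emph{shape} global and uniform across all Jordan types and the real/complex dichotomy: the planar system pins down $(P,R)$ cleanly, but transferring this to convergence of the genuine matrix $J(t)$ near the degenerate $(-2,1,1)$ configuration, where two eigenvalues merge and a Jordan block can form, is exactly where the careful bookkeeping of the difference dynamics and the $c$-equation above must be made rigorous.
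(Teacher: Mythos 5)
Your proposal is correct in substance but follows a genuinely different route from the paper. The paper splits the proof according to the three possible Jordan types under the hypothesis (real diagonalizable, complex diagonalizable, and a single real Jordan block), writes down explicit scalar ODEs for the entries of $J(t)$ in each case, and integrates them separately; the real diagonalizable case is delegated to an earlier result of the author. You instead work with the conjugation invariants $P=\tr(M^2)$ and $R=\tr(M^3)$, for which the flow closes into the planar system $\partial_t P=-2R$, $\partial_t R=-\tfrac12 P^2$ (I checked the Cayley--Hamilton step $\tr(M^4)=\tfrac12 P^2$ and the conservation of $H=2R^2-\tfrac13 P^3$; both are right, and $H$ is indeed a multiple of the discriminant). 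This buys you a single uniform blowup argument and the sharp rates $P\sim 6(T_{max}-t)^{-2}$, $R\sim -6(T_{max}-t)^{-3}$ across all Jordan types at once, plus a sharper decay of the imaginary part in the complex case ($b=O((T_{max}-t)^2)$ from conservation of $H$, versus the paper's weaker exponent). The cost is that you still need the case-by-case bookkeeping at the end (the difference identity $\partial_t(\mu_i-\mu_j)=\mu_k(\mu_i-\mu_j)$ and the $\partial_t c=\mu_1 c$ equation for the nilpotent entry, both of which are correct and reproduce the paper's $c\sim(T_{max}-t)^2$) to convert convergence of $(P,R)$ into convergence of the matrix $J(t)$ itself.

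One step is asserted too quickly: monotonicity of $R$ alone does not imply every trajectory enters $\{R<0\}$. The level set $H=0$ is a cuspidal cubic whose upper branch ($P>0$, $R>0$) flows into the nilpotent equilibrium as $t\to+\infty$ without ever reaching $R<0$; these are exactly the global, non-blowup solutions of the matrix equation. You must therefore check explicitly that the hypothesis $\operatorname{Re}(\lambda_1)<\operatorname{Re}(\lambda_2)$ excludes precisely the closure of this branch: $H=0$ forces a repeated eigenvalue, so the spectrum is $\{a,a,-2a\}$, and $R=3\det M^0=-6a^3\ge 0$ forces $a\le 0$, i.e.\ $\lambda_1=\lambda_2$, contradicting the strict inequality. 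With that sentence added (and a short argument that trajectories with $H>0$ traverse the turning point $R=0$ in finite time, which follows since $R=O(\sqrt{P-P_*})$ near the turning point makes the time integral converge), your phase-plane argument is complete.
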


\begin{remark}
Theorem \ref{GeneralCase} in fact shows that the set of blowup solutions is stable even when the vorticity is not aligned with one of the eigenvectors of the strain rate, and is in fact a global attractor for all cases where
$\operatorname{Re}(\lambda_1) <
\operatorname{Re}(\lambda_2) \leq 
\operatorname{Re}(\lambda_3)$.
This is because if 
\begin{equation}
    M=\left(\begin{array}{ccc}
         -(1+r) & 0 & k  \\
         0 & r & 0 \\
         -k &0 & 1
    \end{array}\right),
\end{equation}
where $0<r\leq 1, k^2=1+r-2r^2$, 
then $M=QDQ^{-1}$,
where
\begin{equation}
    D=\left(\begin{array}{ccc}
         -2r &0 &0  \\
         0 & r &0 \\
         0 &0 & r
    \end{array}
    \right),
\end{equation}
and
\begin{equation}
    Q=\left(\begin{array}{ccc}
         \frac{1+2r}{\sqrt{k^2+(1+2r)^2}} &0 &\frac{k}{\sqrt{k^2+(1+2r)^2}}  \\
         0 & 1 &0 \\
         \frac{k}{\sqrt{k^2+(1+2r)^2}} &0 & \frac{1+2r}{\sqrt{k^2+(1+2r)^2}}
    \end{array}
    \right).
\end{equation}
This means that the asympotics of blowup for the broad class of initial data in Theorem \ref{GeneralCase} is in fact precisely the blowup solutions from Theorem \ref{BlowupFamIntro}.
\end{remark}

\begin{remark}
The matrix Navier--Stokes equation
\begin{equation}
    \partial_t M +M^2
    -\frac{1}{3} \tr\left(M^2\right)I_3=0,
\end{equation}
was previously considered by Cantwell in \cite{Cantwell}.
Finite-time blowup was observed in this paper, as well as the alignment of the vorticity with the middle eigenvector of the strain for a certain class of blowup solutions. The analysis in \cite{Cantwell} was quite different than that in the present paper. It was based on a second order linear differential equation involving Jacobian elliptic functions, rather than a first order nonlinear equation involving the Jordan normal form. The author would like to thank Michele Dolce for bringing the paper \cite{Cantwell} to his attention.
\end{remark}

Thus far we have considered the Eulerian formulation of the fluid mechanics, where we look at the fluid velocity at each point in time. The other approach that can be taken is the Lagrangian formulation of fluid mechanics, where we follow the evolution of particles, tracking where a given fluid parcel at $t=0$ has moved for $t>0$. 
In general, the Eulerian formulation of fluid mechanics is more conducive to analysis, but the Lagrangian framework is also important, because it is easier from an observation point of view to track the flow of fluid parcels (using dyes for instance), then to measure the velocity at specific points. This means that the Lagrangian formulation can be necessary from a data gathering point of view. There is also a connection between the Euler equation and the principle of least action involving optimal transport that requires the Lagrangian formulation \cites{Brenier}.

For the Lagrangian formulation, we consider where a particle that begins at $y_0$ will be at time $t$.
This is governed by the differential equation
\begin{equation}
    \partial_t y(t)=u(y(t),t)
\end{equation}
We can also consider the general Lagrangian flow map associated to the velocity vector field.
The flow map $Y(y_0,t)$ can be determined by the velocity field with the differential equation
\begin{equation} \label{LagrangeDiffEq}
    \partial_t Y(y_0,t)
    =u(Y(y_0,t),t),
\end{equation}
with the initital condition
\begin{equation}
    Y(y_0,0)=y_0.
\end{equation}
The map $Y(y_0,t)$ tells us where a particle that starts out at $y_0$ will be at time $t$.
We will note that, subject to certain regularity conditions, the flow map $y(y_0,t)$ and the velocity field $u(x,t)$ completely determine each other. Both fully characterize the flow and the fluid dynamical problem. There is, however, no straightforward evolution equation for $Y$ without having previously solved for $u$.
Because the flow is incompressible, the map $Y(\cdot,t)$ must be volume preserving.
It can be shown that this is implied by differential equation \eqref{LagrangeDiffEq} and the divergence free constraint $\nabla\cdot u=0$.

We can solve explicitly for the Lagrangian flow map for the family of blowup solutions described in Theorem \ref{BlowupFamIntro}. The conclusions are rather interesting physically.

\begin{theorem} \label{LagrangianIntro}
Let $Y(y_0,t)$ be the Lagrangian map associated with the velocity field
\begin{equation}
    u(x,t)=M(t)x,
\end{equation}
where for all $0\leq t<T_{max}=\frac{1}{r\lambda_0}$
\begin{equation}
    M(t)=\frac{\lambda_0}{1-r\lambda_0 t}
    \left(\begin{array}{ccc}
         -(1+r)& 0 & k  \\
         0& r & 0 \\
         -k &0 & 1
    \end{array}
    \right),
\end{equation}
where $0<r\leq 1, k^2=1+r-2r^2$.
That is let
\begin{equation}
    \partial_t Y(y_0,t)=u(Y(y_0,t),t)
\end{equation}
and
\begin{equation}
    Y(y_0,0)=y_0.
\end{equation}
Then for all $0\leq t<T_{max}$,
\begin{equation}
    Y(y_0,t)=QD(t)Q^{-1}y_0,
\end{equation}
where 
\begin{equation}
    D(t)=\left(\begin{array}{ccc}
         (1-r\lambda_0t)^2 &0 &0  \\
         0 & \frac{1}{1-r\lambda_0t} &0 \\
         0 &0 & \frac{1}{1-r\lambda_0t}
    \end{array}
    \right),
\end{equation}
and
\begin{equation}
    Q=\left(\begin{array}{ccc}
         \frac{1+2r}{\sqrt{k^2+(1+2r)^2}} &0 &\frac{k}{\sqrt{k^2+(1+2r)^2}}  \\
         0 & 1 &0 \\
         \frac{k}{\sqrt{k^2+(1+2r)^2}} &0 & \frac{1+2r}{\sqrt{k^2+(1+2r)^2}}
    \end{array}
    \right),
\end{equation}

This can be equivalently expressed as an ODE.
Suppose
\begin{equation}
    \partial_t y(t)=u(y(t),t),
\end{equation}
and $y_0=c_1 v_1+c_2v_2+c_3v_3$, where $v_1,v_2,v_3$ are the column vectors of $Q$.
Then for all $0\leq t<T_{max}$,
\begin{equation}
    y(t)=c_1 (1-r\lambda_0 t)^2v_1
    +c_2\frac{1}{1-r\lambda_0t}v_2
    +c_3 \frac{1}{1-r\lambda_0t} v_3.
\end{equation}
\end{theorem}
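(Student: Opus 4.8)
The plan is to exploit the linearity of the velocity field in the spatial variable, which forces the Lagrangian flow map to be linear in the initial position and reduces the transport problem to a matrix-valued linear ODE. First I would write $Y(y_0,t)=\Phi(t)y_0$; the defining relation $\partial_t Y=u(Y,t)=M(t)Y$ then forces the fundamental matrix $\Phi$ to solve $\partial_t\Phi=M(t)\Phi$ with $\Phi(0)=I_3$, so the whole theorem is a statement about this one fundamental solution.

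The key structural observation is that $M(t)=f(t)M_0$, where $f(t)=\frac{\lambda_0}{1-r\lambda_0 t}$ is scalar (and coincides with the $\lambda(t)$ solving $\partial_t\lambda=r\lambda^2$) and $M_0$ is the constant matrix multiplying $f(t)$ in the statement. Since $M(t_1)$ and $M(t_2)$ commute for all times, the fundamental solution is the matrix exponential $\Phi(t)=\exp\!\big(F(t)M_0\big)$ with $F(t)=\int_0^t f(s)\,\diff s=-\tfrac{1}{r}\ln(1-r\lambda_0 t)$. I would then diagonalize $M_0=QD_0Q^{-1}$ with $D_0=\operatorname{diag}(-2r,r,r)$, exactly as recorded in the remark following Theorem \ref{GeneralCase}; concretely, $e_2$ is an eigenvector with eigenvalue $r$, and the characteristic polynomial of the $(1,3)$ block factors as $\mu^2+r\mu-2r^2=(\mu+2r)(\mu-r)$ using $k^2=1+r-2r^2$. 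Exponentiating entrywise gives $\Phi(t)=Q\exp\!\big(F(t)D_0\big)Q^{-1}=QD(t)Q^{-1}$, and the identities $e^{-2rF(t)}=(1-r\lambda_0 t)^2$ and $e^{rF(t)}=(1-r\lambda_0 t)^{-1}$ recover the claimed $D(t)$.

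The cleanest way to finish, sidestepping any subtlety about exponentials of non-normal matrices, is to verify the ansatz directly and appeal to uniqueness. Differentiating $Y(y_0,t)=QD(t)Q^{-1}y_0$ and using $M(t)Q=f(t)M_0Q=f(t)QD_0$ reduces $\partial_t Y=M(t)Y$ to the single diagonal identity $D'(t)=f(t)D_0 D(t)$ (the diagonal matrices $D_0$ and $D(t)$ commute), which one checks componentwise, while $D(0)=I_3$ supplies the initial condition. Since $M(t)$ is continuous on $[0,T_{max})$, the linear ODE has a unique solution, so $QD(t)Q^{-1}y_0$ is indeed the flow map. The ODE/eigenbasis formulation is then immediate: writing $y_0=c_1v_1+c_2v_2+c_3v_3$ in the columns of $Q$ (the eigenvectors of $M_0$), the operator $QD(t)Q^{-1}$ simply scales each $v_i$ by the corresponding diagonal entry of $D(t)$, giving the stated expression for $y(t)$.

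I do not expect a genuine obstacle here; the computation is routine once the scalar-times-constant structure $M(t)=f(t)M_0$ is recognized. The only points requiring care are the sign and the factor $\tfrac{1}{r}$ in the time change $F(t)$, and the observation that $Q$ is \emph{not} orthogonal---its two block eigenvectors are non-orthogonal because $M_0$ is not symmetric---so the inverse $Q^{-1}$ must genuinely be used rather than $Q^{tr}$.
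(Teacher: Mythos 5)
Your proposal is correct and, at its core, takes the same route as the paper: the paper likewise diagonalizes the constant matrix (Proposition \ref{Eigenvectors} gives the eigenvalues $-2r$ and $r$ with eigenvectors $v_1,v_2,v_3$), writes the candidate flow in that eigenbasis with the scalings $(1-r\lambda_0 t)^2$ and $(1-r\lambda_0 t)^{-1}$, and verifies $\partial_t y=M(t)y$ by direct differentiation, exactly as in your final verification step. Your preliminary derivation via the commuting family $M(t)=f(t)M_0$ and the matrix exponential $\exp(F(t)M_0)$ is a clean way to discover the ansatz, but since you then check the ansatz directly and invoke uniqueness for the linear ODE, the substance of the argument coincides with Theorem \ref{LangragianThmBody} and Corollary \ref{LagrangeCor}.
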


\begin{remark} \label{RotationRemarkIntro}
It may appear at first that there is no rotation in the Lagrangian frame, for the family of blowup solutions in Theorem \ref{BlowupFamIntro}, only equally strong stretching in the the direction of $v_2, v_3$, and compression in the direction of $v_1$. This would appear to contradict the both the presence of vorticity, and the fact that $\lambda_2<\lambda_3$ when $0<r<1$.
The rotation due to the vorticity comes into play when we consider the fact that the eigenbasis of the solutions $M(t)$ of the matrix Navier--Stokes equation is not an orthogonal basis when $0<r<1$, and consequently $k\neq 0$.
The effect of vorticity is to rotate the direction of greatest stretching and greatest compression about the $y$-axis, which is both the weaker direction of stretching and the direction of vorticity, so that they are no longer orthogonal. This means the rotation in the Lagrangian frame is built into the change of basis matrices $Q$ and $Q^{-1}$
in our expression
\begin{equation}
    Y(y_0,t)=QD(t)Q^{-1}y_0,
\end{equation}
precisely because the eigenbasis is not orthogonal, and consequently the matrix $Q$ is not orthogonal.
We will work through some examples in Section 5 showing that rotation does in fact occur in the Lagrangian frame, due to the presence of vorticity, for the family of blowup solutions from Theorem \ref{BlowupFamIntro} when $0<r<1$.
\end{remark}

\begin{remark}
The Lagrangian map for the family of blowup solutions in Theorem \ref{LagrangianIntro} maps the plane $\spn\{v_2,v_3\}$ to infinity at $t\to T_{max}$
and maps the axis $\spn\{v_1\}$ to the origin as $t\to T_{max}$.
All the points that are not in this plane or on that axis evolve according to a linear combination of these evolutions.
This means that the Lagrangian map for the family of blowup solutions involves unbounded planar stretching and axial compression globally. As previously mentioned, a regularity criterion on $\lambda_2^+$ proven first by Neustupa and Penel \cites{NeuPen1,NeuPen2} and by different methods by the author in \cite{MillerStrain} shows that blowup for the Navier--Stokes equation requires unbounded planar stretching and axial compression. For mild solutions with finite-time blowup, this will not be a global phenomenon, with an entire axis compressing to the origin and an entire plane stretching to infinity. The global character of this stretching/compression is a result of the linear dependence on the spatial variables of our solution. Nonetheless, the blowup for this toy model does seem to have some of the features required for actual Navier--Stokes blowup, from the alignment of the vorticity with the intermediate eigenvector to the presence of unbounded planar stretching and axial compression.
\end{remark}

In section 3, we will prove Theorem \ref{BlowupFamIntro}, dealing with the special family of blowup solutions. In section 4, we will consider the behaviour of this special family of solution in the Lagrangian framework, proving Theorem \ref{LagrangianIntro}. In section 5, we will show that the the family of blowup solutions with the vorticity and aligned with the middle eigenvector is stable and is in fact an attractor for a wide range of initial conditions, proving Theorem \ref{GeneralCase}.
Finally, in appendix A, we will prove Theorem \ref{MainTheorem}. The proof of this theorem is left to an appendix, because it only covers the case where the vorticity is aligned with one of the eigenvectors of the strain, and therefore is is less general than Theorem \ref{GeneralCase}. We will still prove this result separately, because in this special case we are able to get more precise asymptotics than in the general case, particularly for the boundary case, $k_0^2=\frac{1}{2}(r_0+2)^2, r_0>0$.

\section{Family of special solutions and phase space analysis}

In this section, we will prove Theorem \ref{BlowupFamIntro} and will also perform phase space analysis for the parameters $r,k$ that will be useful in the proof of Theorem \ref{MainTheorem}.
We will begin by proving Theorem \ref{BlowupFamIntro}, which is restated here for the reader's convenience.

\begin{theorem} \label{BlowupFam}
Suppose we have initial data
\begin{equation}
    S^0=
    \left(
    \begin{array}{ccc}
         -(r+1)\lambda_0& 0 & 0  \\
          0& r \lambda_0 &0 \\
          0 &0 &\lambda_0 
    \end{array}
    \right)
\end{equation}
and 
\begin{equation}
    \omega^0= 
    \left(
    \begin{array}{c}
         0  \\
         2k\lambda_0 \\
         0
    \end{array}
    \right),
\end{equation}
where $\lambda_0>0, -\frac{1}{2}\leq r\leq 1,$ and
$k^2=1+r-2r^2$.
If we let
\begin{equation}
    \partial_t\lambda=r\lambda^2,
\end{equation}
then the pair $(S,\omega)$ given by 
\begin{equation}
    S(t)=
    \left(
    \begin{array}{ccc}
         -(r+1)\lambda(t)& 0 & 0  \\
          0& r \lambda(t) &0 \\
          0 &0 &\lambda(t)
    \end{array}
    \right)
\end{equation}
and
\begin{equation}
        \omega(t)= 
    \left(
    \begin{array}{c}
         0  \\
         2k\lambda(t) \\
         0
    \end{array}
    \right)
\end{equation}
is a solution to the strain-vorticity pair equation
with initial data $\left(S^0,\omega^0\right)$.

The general form for this solution is given by
\begin{equation}
    \lambda(t)=\frac{\lambda_0}{1-r\lambda_0 t}.
\end{equation}
When $r>0$ this solution blows up in finite time,
$T_{max}= \frac{1}{r\lambda_0}$.
When $r=0$, this is a stationary solution, and hence $T_{max}=+\infty.$
When $r<0, T_{max}=+\infty$ and this solution decays to $0$ at infinity.
\end{theorem}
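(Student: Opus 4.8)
The plan is to verify directly that the prescribed pair $(S,\omega)$ satisfies both equations of the strain-vorticity pair equation, and then to integrate the scalar ODE for $\lambda$ and read off the asymptotics. The structural observation that makes this tractable is that $S(t)$ is diagonal while $\omega(t)$ points along $e_2$, the eigenvector of the middle eigenvalue $r\lambda$. Consequently every matrix entering the strain equation---namely $\partial_t S$, $S^2$, $\omega\otimes\omega$, and the scalar multiples of $I_3$---is diagonal, with $\omega\otimes\omega$ supported only in its $(2,2)$ entry. The off-diagonal components of the strain equation therefore vanish automatically, so it suffices to check the three diagonal entries together with the (vector) vorticity equation.

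First I would dispose of the vorticity equation $\partial_t\omega=S\omega$. Computing $S\omega=(0,\,r\lambda\cdot 2k\lambda,\,0)^{tr}=(0,\,2kr\lambda^2,\,0)^{tr}$ while $\partial_t\omega=(0,\,2k\,\partial_t\lambda,\,0)^{tr}$, the two agree exactly because we have imposed $\partial_t\lambda=r\lambda^2$. This is precisely the step that dictates the evolution law for $\lambda$: the middle eigenvalue $r\lambda$ of the strain is the rate at which the aligned vorticity is stretched, so consistency of the vorticity equation forces the factor $r$.

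Next I would check the strain equation entry by entry, using $|S|^2=\tr(S^2)=(2r^2+2r+2)\lambda^2$ and $|\omega|^2=4k^2\lambda^2$. After dividing by $\lambda^2$, each diagonal entry reduces to a polynomial identity in $r$ and $k$: the $(1,1)$ and $(3,3)$ entries both collapse to $r+1-\frac{2}{3}(r^2+r+1)-\frac{1}{3}k^2$, and the $(2,2)$ entry to $\frac{4}{3}r^2-\frac{2}{3}r-\frac{2}{3}+\frac{2}{3}k^2$. Substituting the constraint $k^2=1+r-2r^2$ makes all three expressions vanish identically. The key point is that this constraint is exactly the condition cutting out the invariant set on which $r$ and $k$ are frozen and only $\lambda$ evolves; I expect no genuine obstacle here beyond careful bookkeeping, since the whole verification is algebraic.

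Finally I would integrate $\partial_t\lambda=r\lambda^2$ with $\lambda(0)=\lambda_0$ by separation of variables, obtaining $\lambda(t)=\lambda_0/(1-r\lambda_0 t)$, and then read off the three regimes from the denominator. For $r>0$ the denominator vanishes at $t=1/(r\lambda_0)$, giving finite-time blowup with $T_{max}=1/(r\lambda_0)$ and $\lambda(t)\to+\infty$; for $r=0$ the equation gives the stationary solution $\lambda\equiv\lambda_0$ with $T_{max}=+\infty$; and for $r<0$ the denominator $1-r\lambda_0 t=1+|r|\lambda_0 t$ grows without bound, so the solution exists globally and decays to $0$ like $1/t$ as $t\to+\infty$.
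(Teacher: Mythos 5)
Your proposal is correct and follows essentially the same route as the paper: a direct entry-by-entry verification that the diagonal strain and aligned vorticity satisfy both equations of the strain-vorticity pair system (with the constraint $k^2=1+r-2r^2$ making the diagonal polynomial identities vanish, and $\partial_t\lambda=r\lambda^2$ forced by the vorticity equation), followed by elementary integration of the scalar ODE to read off the three regimes. The only differences are cosmetic --- you check the vorticity equation first and phrase the strain check as scalar identities rather than displayed matrices --- so nothing further is needed.
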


\begin{proof}
We must show that when $k=1+r-2r^2$ and $\partial_t\lambda=r\lambda,$ then $S,\omega$ is a solution of the strain-vorticity pair equation.

First we will observe that 
\begin{equation}
    \partial_t S=
    \left(
    \begin{array}{ccc}
         -(r+1)r\lambda^2& 0 & 0  \\
          0& r^2 \lambda^2 &0 \\
          0 &0 &r\lambda^2
    \end{array}
    \right).
\end{equation}
We can also see that
\begin{align}
    |S|^2&=\left(2+2r+2r^2\right)\lambda^2 \\
    |\omega|^2&=4k^2\lambda^2.
\end{align}
Therefore we can compute that
\begin{equation}
    S^2-\frac{1}{3}|S^2|I_3
    =
    \lambda^2 \left(
    \begin{array}{ccc}
         \frac{1}{3}r^2+\frac{4}{3}r+\frac{1}{3}
         & 0 & 0  \\
          0& \frac{1}{3}r^2-\frac{2}{3}r-\frac{2}{3}
          &0 \\
          0 &0 &-\frac{2}{3}r^2-\frac{2}{3}r+\frac{1}{3}
    \end{array}
    \right).
\end{equation}
Likewise, using the fact that $k^2=1+r-2r^2,$ we may compute that
\begin{equation}
    \frac{1}{4}\omega\otimes\omega
    -\frac{1}{12}|\omega|^2I_3
    =
    \lambda^2\left(
    \begin{array}{ccc}
         \frac{2}{3}r^2-\frac{1}{3}r-\frac{1}{3}
         & 0 & 0  \\
          0& -\frac{4}{3}r^2+\frac{2}{3}r+\frac{2}{3}
          &0 \\
          0 &0 &\frac{2}{3}r^2-\frac{1}{3}r-\frac{1}{3}
    \end{array}
    \right).
\end{equation}
Putting these together, we can compute that
\begin{equation}
    S^2-\frac{1}{3}|S^2|I_3
    +\frac{1}{4}\omega\otimes\omega
    -\frac{1}{12}|\omega|^2I_3
    =
    \lambda^2\left(
    \begin{array}{ccc}
         r^2+r& 0 & 0  \\
          0& -r^2  &0 \\
          0 &0 &-r
    \end{array}
    \right).
\end{equation}
Therefore we can conclude that
\begin{equation}
    \partial_t S+S^2-\frac{1}{3}|S^2|I_3
    +\frac{1}{4}\omega\otimes\omega
    -\frac{1}{12}|\omega|^2I_3
    =0.
\end{equation}

Doing the analogous calculation for the vorticity, we can see that
\begin{equation}
    \partial_t\omega
    =
    \left(
    \begin{array}{c}
         0  \\
         2kr\lambda^2 \\
         0
    \end{array}
    \right),
\end{equation}
and that likewise
\begin{equation}
    S\omega 
    =
    \left(
    \begin{array}{c}
         0  \\
         2kr\lambda^2 \\
         0
    \end{array}
    \right).
\end{equation}
We may therefore conclude that 
\begin{equation}
    \partial_t\omega-S\omega=0,
\end{equation}
and this completes the proof.
\end{proof}

Now we will prove several results involving the parameters $r,k,\lambda$ that will be useful when we prove Theorem \ref{MainTheorem} in the next section.

\begin{proposition}  \label{ParameterODE}
Suppose $\lambda, k,r\in\mathbb{R}\in 
C^1\left([0,T_{max})\right), \lambda> 0$
satisfy the differential equations
\begin{align}
    \partial_t\lambda &= \label{A}
    \frac{1}{3}\left(-1+2r+2r^2+k^2\right) \lambda^2 \\
    \partial_t r &= \label{B}
    \frac{1}{3}\lambda\left(2+3r-3r^2-2r^3-(r+2)k^2\right) \\
    \partial_t k &=\frac{1}{3}\lambda (1+r-2r^2-k^2)k. \label{C}
\end{align}
Then the pair $S,\omega\in C^1\left([0,T_{max})\right)$
is a solution of the strain-vorticity pair equation where
the strain is given by
\begin{equation}
    S(t)=
    \left(
    \begin{array}{ccc}
         -(r(t)+1)\lambda(t)& 0 & 0  \\
          0& r(t) \lambda(t) &0 \\
          0 &0 &\lambda(t) 
    \end{array}
    \right),
\end{equation}
and a vorticity given by
\begin{equation}
    \omega(t)= 
    \left(
    \begin{array}{c}
         0  \\
         2k(t)\lambda(t) \\
         0
    \end{array}
    \right),
\end{equation}
We will say that $\lambda,r,k$ satisfy the strain-vorticity parameter ODE if the differential equations \eqref{A},\eqref{B},\eqref{C} hold with $g$ defined as above.
\end{proposition}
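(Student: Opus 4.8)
The plan is to prove the proposition by direct substitution, exploiting the fact that the diagonal ansatz for $S$ together with the $e_2$-aligned ansatz for $\omega$ makes every matrix appearing in the strain-vorticity pair equation diagonal. Consequently the strain equation collapses to scalar identities on the diagonal, and the vorticity equation collapses to a single scalar identity in the $e_2$ component, while all off-diagonal and off-axis components vanish identically. I would first record the elementary computations
\[
    |S|^2=\tr\left(S^2\right)=\left(2r^2+2r+2\right)\lambda^2,
    \qquad
    |\omega|^2=4k^2\lambda^2,
\]
together with $S^2=\lambda^2\,\mathrm{diag}\left((r+1)^2,r^2,1\right)$ and $\tfrac{1}{4}\omega\otimes\omega=\lambda^2\,\mathrm{diag}\left(0,k^2,0\right)$, so that the strain equation becomes a statement about the diagonal entries of $\partial_t S+S^2-\tfrac{1}{3}|S|^2 I_3+\tfrac{1}{4}\omega\otimes\omega-\tfrac{1}{12}|\omega|^2 I_3$.

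The key structural observation I would use to shorten the computation is that the entire left-hand side of the strain equation is a \emph{trace-free} diagonal matrix: $\partial_t S$ is trace-free because $\tr(S)\equiv 0$, while $S^2-\tfrac{1}{3}|S|^2 I_3$ and $\tfrac{1}{4}\omega\otimes\omega-\tfrac{1}{12}|\omega|^2 I_3$ are trace-free by construction. Hence it suffices to verify the vanishing of the $(3,3)$ and $(2,2)$ entries, and the $(1,1)$ entry then vanishes automatically. For the $(3,3)$ entry I would substitute only \eqref{A} for $\partial_t\lambda$; after factoring out $\tfrac{1}{3}\lambda^2$ the bracket $\left(-1+2r+2r^2+k^2\right)+3-\left(2r^2+2r+2\right)-k^2$ collapses to zero. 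For the $(2,2)$ entry I would substitute both \eqref{A} and \eqref{B}; clearing the factor $\tfrac{1}{3}\lambda^2$ leaves a polynomial in $r$ and $k$ whose monomials (the constant, $r$, $r^2$, $r^3$, $k^2$, and $rk^2$ terms) cancel in each degree separately.

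For the vorticity equation I would note that $\partial_t\omega=2\left(\lambda\,\partial_t k+k\,\partial_t\lambda\right)e_2$ and, since $S$ is diagonal with middle entry $r\lambda$, that $S\omega=2rk\lambda^2\,e_2$. The equation $\partial_t\omega-S\omega=0$ thus reduces to the scalar identity $\lambda\,\partial_t k+k\,\partial_t\lambda=rk\lambda^2$. Inserting \eqref{A} and \eqref{C} and factoring out $\tfrac{1}{3}k\lambda^2$ produces the bracket $\left(1+r-2r^2-k^2\right)+\left(-1+2r+2r^2+k^2\right)=3r$, which yields exactly $rk\lambda^2$ and closes the verification.

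I do not expect any genuine obstacle: the proposition is a verification result, and its whole content is the polynomial bookkeeping described above, which is guaranteed to close because the three equations \eqref{A}--\eqref{C} were precisely engineered as the coefficient-matching conditions for this ansatz. The only point requiring a moment of care is confirming that the diagonal ansatz really does force every off-diagonal entry of the strain equation and every off-axis component of the vorticity equation to vanish; this is immediate here because $S$, $S^2$, $\partial_t S$, $\omega\otimes\omega$, and $I_3$ are all diagonal, while both $\partial_t\omega$ and $S\omega$ are scalar multiples of $e_2$.
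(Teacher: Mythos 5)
Your proposal is correct and follows essentially the same route as the paper: a direct verification by substituting the ODEs \eqref{A}--\eqref{C} into the diagonal ansatz, checking the diagonal entries of the strain equation and the $e_2$ component of the vorticity equation. The only difference is your use of trace-freeness to skip the $(1,1)$ entry, a valid but minor shortcut where the paper simply computes all three diagonal entries explicitly.
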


If we let $g=1+r-2r^2-k^2$, then these equations can be expressed nicely in terms of $g$ by
\begin{align}
    \partial_t\lambda&= 
    \left(r-\frac{1}{3}g\right)\lambda^2 \\
    \partial_t r&= 
    \frac{1}{3}\lambda(r+2)g \\
    \partial_t k&= 
    \frac{1}{3}\lambda gk.
\end{align}

\begin{remark}
Note that $g$ measures the defect in the condition $1+r-2r^2-k^2=0$ from Theorem \ref{BlowupFam}.
If $g=0$, then $\partial_t \lambda=r\lambda$, 
and $\partial_t r,\partial_t k=0$. 
It is easy to see that the condition $k^2=1+r-2r^2$ 
can be expressed equivalently as $g=0,$
so Theorem \ref{BlowupFam} can be seen as the special case of Proposition \ref{ParameterODE} where $g=0$, and therefore $r$ and $k$ are constant. 
\end{remark}

\begin{proof}
\begin{align}
    \partial_t\lambda &=
    \frac{1}{3}\left(-1+2r+2r^2+k^2\right) \lambda^2 \\
    \partial_t r &=
    \frac{1}{3}\lambda\left(2+3r-3r^2-2r^3-(r+2)k^2\right) \\
    \partial_t k &=\frac{1}{3}\lambda (1+r-2r^2-k^2)k.
\end{align}
We will begin by computing
\begin{equation}
    S^2= \lambda^2
    \left(\begin{array}{ccc}
         r^2+2r+1& 0 & 0  \\
          0& r^2 &0 \\
          0 &0 & 1
    \end{array}
    \right),
\end{equation}
and likewise that
\begin{equation}
    |S|^2=(2+2r+2r^2)\lambda^2.
\end{equation}
Therefore we can see that
\begin{equation}
    S^2-\frac{1}{3}|S|^2 I_3
    =\lambda^2 \left(\begin{array}{ccc}
         \frac{1}{3}+\frac{4}{3}r+\frac{1}{3}r^2
         & 0 & 0  \\
          0& -\frac{2}{3}-\frac{2}{3}r+\frac{1}{3}r^2
          &0 \\
          0 &0 & \frac{1}{3}-\frac{2}{3}r-\frac{2}{3}r^2
    \end{array}
    \right).
\end{equation}
Making the analogous calculation for the vorticity, we find that
\begin{equation}
    \frac{1}{4}\omega\otimes\omega
    -\frac{1}{12}|\omega|^2I_3
    =
    \lambda^2 \left(\begin{array}{ccc}
         -\frac{1}{3}k^2 & 0 & 0  \\
          0& \frac{2}{3}k^2&0 \\
          0 &0 & -\frac{1}{3}k^2.
    \end{array}
    \right).
\end{equation}
Putting these equations together we find that
\begin{equation}
    S^2-\frac{1}{3}|S|^2 I_3
    +\frac{1}{4}\omega\otimes\omega
    -\frac{1}{12}|\omega|^2I_3
     =\frac{1}{3}\lambda^2 
     \left(\begin{array}{ccc}
         1+4r+r^2-k^2 & 0 & 0  \\
          0& -2-2r+r^2+2k^2 &0 \\
          0 &0 & 1-2r-2r^2-k^2
    \end{array}
    \right).
\end{equation}

Now we will apply the strain vorticity parameter ODE directly to compute the time derivative of the strain, finding that 
\begin{align}
    \partial_t S
    &=
        \left(
    \begin{array}{ccc}
         -r'\lambda-(r+1)\lambda'& 0 & 0  \\
          0& r'\lambda+r\lambda' &0 \\
          0 &0 &\lambda' 
    \end{array}
    \right) \\
    &=
    \frac{\lambda^2}{3}\left(
    \begin{array}{ccc}
    -1-4r-r^2+k^2& 0 & 0  \\
    0& 2+2r-r^2-2k^2 &0 \\
    0 &0 & -1+2r+2r^2+k^2 
    \end{array}
    \right) 
\end{align}
Finally we can conclude that
\begin{equation}
    \partial_t S
    +S^2-\frac{1}{3}|S|^2 I_3
    +\frac{1}{4}\omega\otimes\omega
    -\frac{1}{12}|\omega|^2I_3
    =0.
\end{equation}

Now we will turn to the vorticity equation.
It is clear that
\begin{equation}
    S\omega= 
       \left(
    \begin{array}{c}
         0  \\
         2rk\lambda^2 \\
         0
    \end{array}
    \right),
\end{equation}
Now taking the time derivate and using the parameter ODE, we find that
\begin{align}
    \partial_t\omega
    &=
         \left(
    \begin{array}{c}
         0  \\
         2k'\lambda+2k\lambda' \\
         0
    \end{array}
    \right) \\
    &=
    \left(
    \begin{array}{c}
         0  \\
         2rk\lambda^2 \\
         0
    \end{array}
    \right).
\end{align}
Therefore we can conclude that
\begin{equation}
    \partial_t\omega-S\omega=0,
\end{equation}
and this completes the proof.
\end{proof}

\begin{proposition} \label{PhaseSpace}
Suppose $\lambda,r,k\in C^1\left([0,T_{max})\right)$ 
is a solution of the strain-vorticity parameter system of ODEs.
If $r_0\neq-2,$ then for all $0<t<T_{max}$,
\begin{equation}
    k(t)=m_0(r(t)+2),
\end{equation}
where
\begin{equation}
    m_0=\frac{k_0}{r_0+2}.
\end{equation}
If $r_0=-2,$ then for all $0<t<T_{max}$,
\begin{equation}
    r(t)=-2.
\end{equation}
\end{proposition}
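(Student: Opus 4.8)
The plan is to exploit the clean reformulation recorded just after Proposition \ref{ParameterODE}: writing $g=1+r-2r^2-k^2$, the relevant two equations of the strain-vorticity parameter ODE become
\begin{align*}
    \partial_t r &= \tfrac{1}{3}\lambda(r+2)g, \\
    \partial_t k &= \tfrac{1}{3}\lambda g k.
\end{align*}
The key structural fact I would emphasize is that $r+2$ and $k$ obey the \emph{same} scalar linear ODE. Since we are handed a fixed solution $\lambda,r,k\in C^1\left([0,T_{max})\right)$, the function
\begin{equation*}
    a(t):=\tfrac{1}{3}\lambda(t)g(t)=\tfrac{1}{3}\lambda(t)\left(1+r(t)-2r(t)^2-k(t)^2\right)
\end{equation*}
is a well-defined continuous (indeed $C^1$) function of $t$ alone; although $g$ is built from the unknowns, along the given solution it is just a known coefficient. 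With this notation, both $y=r+2$ and $y=k$ solve $\partial_t y=a(t)y$.

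From here the proof is an integrating-factor argument. Setting $\phi(t)=\exp\left(\int_0^t a(s)\diff s\right)$, which is strictly positive, uniqueness for the scalar linear ODE $\partial_t y=a(t)y$ gives
\begin{equation*}
    r(t)+2=(r_0+2)\phi(t), \qquad k(t)=k_0\phi(t)
\end{equation*}
for all $0\le t<T_{max}$. This pair of identities contains everything. If $r_0\neq-2$, then $r(t)+2=(r_0+2)\phi(t)\neq0$ for every $t$, since $\phi>0$, so dividing the two identities is legitimate and yields
\begin{equation*}
    \frac{k(t)}{r(t)+2}=\frac{k_0}{r_0+2}=m_0,
\end{equation*}
that is, $k(t)=m_0(r(t)+2)$. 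If instead $r_0=-2$, the first identity forces $r(t)+2\equiv0$, i.e. $r(t)=-2$ for all $t$.

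I do not expect a serious obstacle; the content is entirely in recognizing the shared linear structure of the $r$- and $k$-equations, from which both conclusions fall out simultaneously. The only point warranting care is the logical status of the coefficient $a(t)$: because $g$ depends on the solution, one might worry that the two equations are not genuinely linear. The resolution, which I would state explicitly, is that we are analyzing an \emph{already-existing} solution, so $a(t)$ is a fixed continuous function of time and the equations may be treated as bona fide scalar linear ODEs with a common known coefficient. An equivalent formulation that sidesteps division is to compute $\partial_t\bigl(k(r+2)^{-1}\bigr)$ directly on the region where $r\neq-2$ and check that it vanishes; however, the fundamental-solution formulation above has the advantage of handling the degenerate case $r_0=-2$ in the same stroke and of confirming that $r+2$ never vanishes when $r_0\neq-2$.
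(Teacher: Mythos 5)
Your proof is correct and takes essentially the same approach as the paper: both arguments rest on the observation that $r+2$ and $k$ satisfy the same scalar linear ODE $\partial_t y=\tfrac{1}{3}\lambda g\, y$ along the given solution, with the exponential formula for $r+2$ settling the $r_0=-2$ case and the non-vanishing of $r+2$ otherwise. The only cosmetic difference is that the paper establishes the conserved ratio by computing $\partial_t\left(k/(r+2)\right)=0$ directly, whereas you write both quantities as initial value times the common integrating factor and divide; these are interchangeable, as you yourself note.
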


\begin{proof}
We will begin by observing that
\begin{equation}
    \partial_t(r+2)=\frac{1}{3}\lambda g(r+2).
\end{equation}
For all $0<t<T_{max}, \frac{1}{3}\lambda g
\in L^1\left([0,t]\right)$, so we can observe that 
for all $0<t<T_{max}$,
\begin{equation}
    r(t)+2=(r_0+2)\exp\left(\frac{1}{3}\int_0^t
    \lambda(\tau)g(\tau)\diff\tau\right).
\end{equation}
We will note that this implies that if 
$r_0=-2,$ then for all $0<t<T_{max}, r(t)=-2$, so we are now done with the case where $r_0=-2$.
We can also observe that 
if $r_0>-2,$ then for all $0<t<T_{max}, r(t)>-2,$ and
$r_0<-2,$ then for all $0<t<T_{max}, r(t)<-2$

Now suppose that $r_0\neq -2$, and let
\begin{equation}
    m(t)=\frac{k(t)}{r(t)+2}.
\end{equation}
Note that we have just shown that if $r_0\neq -2$,
then for all $0<t<T_{max}, r(t)\neq -2$,
so it is clear that $m(t)$ is well defined.
Using the strain-vorticity parameter ODE to compute the derivative of $m$, we find that
\begin{align}
    \partial_t m
    &=
    \frac{(r+2) k'-k r'}{(r+2)^2} \\
    &=
    \frac{1}{(r+2)^2}
    \left(\frac{1}{3}(r+2)\lambda gk
    -\frac{1}{3}(r+2)\lambda gk\right) \\
    &=0.
\end{align}
Therefore we can see that for all $0<t<T_{max}$
\begin{equation}
    m(t)=m_0.
\end{equation}
This implies that for all $0<t<T_{max}$,
\begin{equation}
    \frac{k(t)}{r(t)+2}=m_0,
\end{equation}
and this completes the proof.
\end{proof}

\begin{corollary} \label{SimplifiedSystem}
Suppose we have initial data
\begin{equation}
    S^0=
    \left(
    \begin{array}{ccc}
         -(r+1)\lambda_0& 0 & 0  \\
          0& r \lambda_0 &0 \\
          0 &0 &\lambda_0 
    \end{array}
    \right)
\end{equation}
and 
\begin{equation}
    \omega^0= 
    \left(
    \begin{array}{c}
         0  \\
         2k\lambda_0 \\
         0
    \end{array}
    \right),
\end{equation}
where $\lambda_0>0, r_0>0$.
Then the pair $S,\omega\in C^1\left([0,T_{max})\right)$
is a solution of the strain-vorticity pair equation where
the strain is given by
\begin{equation}
    S(t)=
    \left(
    \begin{array}{ccc}
         -(r(t)+1)\lambda(t)& 0 & 0  \\
          0& r(t) \lambda(t) &0 \\
          0 &0 &\lambda(t) 
    \end{array}
    \right),
\end{equation}
and a vorticity given by
\begin{equation}
    \omega(t)= 
    \left(
    \begin{array}{c}
         0  \\
         2m_0(r(t)+2)\lambda(t) \\
         0
    \end{array}
    \right),
\end{equation}
where we again take $m_0=\frac{k_0}{r_0+2}$,
and the parameters $\lambda,r$ satisfy the differential equations
\begin{align}
        \partial_t\lambda &=
    \frac{1}{3}\left(-1+2r+2r^2+m_0^2(r+2)^2\right)
    \lambda^2, \\
    \partial_t r &=
    \frac{1}{3}\lambda(r+2)
    \left(1+r-2r^2-m_0^2(r+2)^2\right)
\end{align}
\end{corollary}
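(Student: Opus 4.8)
The plan is to derive this corollary directly from Proposition~\ref{ParameterODE} and Proposition~\ref{PhaseSpace}; the whole point is that the conserved quantity $m = k/(r+2)$ collapses the three-parameter system in $(\lambda, r, k)$ down to a two-parameter system in $(\lambda, r)$. First I would set $k_0 = m_0(r_0 + 2)$ and appeal to the standard Picard--Lindel\"of theory for the locally Lipschitz system \eqref{A}--\eqref{C} to produce a unique maximal $C^1$ solution $(\lambda, r, k)$ on an interval $[0, T_{max})$ with initial data $(\lambda_0, r_0, k_0)$. Proposition~\ref{ParameterODE} then guarantees that the pair $(S, \omega)$ assembled from these parameters is a solution of the strain-vorticity pair equation, with exactly the prescribed initial data $(S^0, \omega^0)$; uniqueness for the parameter ODE (equivalently, for the matrix Navier--Stokes equation via Proposition~\ref{StrainVortEquivalence}) shows this is the solution.

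Next I would eliminate $k$. Since $r_0 > 0$ we have in particular $r_0 \neq -2$, so Proposition~\ref{PhaseSpace} applies and yields the invariant $k(t) = m_0(r(t) + 2)$ for every $0 \le t < T_{max}$, where $m_0 = k_0/(r_0 + 2)$. Inserting this into the vorticity gives $\omega(t) = (0,\, 2m_0(r(t)+2)\lambda(t),\, 0)^{tr}$, which is the form asserted in the corollary.

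It then remains only to substitute $k^2 = m_0^2(r+2)^2$ into \eqref{A} and \eqref{B}. Equation \eqref{A} becomes $\partial_t \lambda = \frac{1}{3}\left(-1 + 2r + 2r^2 + m_0^2(r+2)^2\right)\lambda^2$ immediately. For $r$ it is cleanest to start from the $g$-form $\partial_t r = \frac{1}{3}\lambda(r+2)g$ with $g = 1 + r - 2r^2 - k^2$; replacing $k^2$ gives $g = 1 + r - 2r^2 - m_0^2(r+2)^2$ and hence $\partial_t r = \frac{1}{3}\lambda(r+2)\left(1 + r - 2r^2 - m_0^2(r+2)^2\right)$, as claimed. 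There is no genuine analytic obstacle: the substance of the argument was already carried out in Proposition~\ref{PhaseSpace}, and the only point needing attention is that the hypothesis $r_0 > 0$ keeps $r(t) \neq -2$ for all time, so that $m_0$ is well-defined and the relation $k = m_0(r+2)$ persists throughout $[0, T_{max})$.
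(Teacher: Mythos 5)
Your proposal is correct and follows essentially the same route as the paper, which simply cites Proposition~\ref{ParameterODE} and Proposition~\ref{PhaseSpace}; you have just spelled out the substitution $k(t)=m_0(r(t)+2)$ into the parameter ODE that the paper leaves implicit. The additional remarks on Picard--Lindel\"of existence and on $r(t)\neq -2$ persisting are harmless elaborations, not a different argument.
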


\begin{proof}
This follows immediately from Proposition \ref{ParameterODE} and Proposition \ref{PhaseSpace}.
\end{proof}

\section{Particle trajectories in the Lagrangian formulation}
\label{Lagrangian}

In this section we will consider the trajectories of particles in the Lagrangian formulation for the family of blowup solutions in Theorem \ref{BlowupFamIntro}.
We will begin by considering the eigenvalues and eigenvectors of matrices $M$ of the type in Theorem \ref{BlowupFamIntro}.

\begin{proposition} \label{Eigenvectors}
Let $k=1+r-2r^2, 0<r\leq 1$. Then the matrix 
\begin{equation}
    M=
    \left( \begin{array}{ccc}
         -(1+r)& 0& k  \\
         0& r& 0 \\
         -k & 0 &1
    \end{array}
    \right)
\end{equation}
has eigenvalues $\rho=-2r$ and $\rho=r$, where the latter has multiplicity two.
The eigenvector for the eigenvalue $\rho=-2r$ is
\begin{equation}
    v_1=
    \frac{1}{\sqrt{k^2+(1+2r)^2}}
    \left( \begin{array}{c}
         1+2r \\
         0 \\
         k
    \end{array}
    \right),
\end{equation}
and the eigenvectors for the eigenvalue $\rho=r$ are
\begin{equation}
    v_2=
        \left( \begin{array}{c}
         0 \\
         1 \\
         0
    \end{array}
    \right),
\end{equation}
and
\begin{equation}
    v_3=
    \frac{1}{\sqrt{k^2+(1+2r)^2}}
    \left( \begin{array}{c}
         k \\
         0 \\
         1+2r
    \end{array}
    \right).
\end{equation}
Furthermore, $\{v_1,v_2,v_3\}$ is a basis for $\mathbb{R}^3$.
\end{proposition}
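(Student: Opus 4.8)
The plan is to exploit the block structure of $M$. Because the second row and second column of $M$ vanish off the diagonal, the second coordinate decouples entirely: I would first observe that $Me_2 = r\,e_2$, so that $v_2 = (0,1,0)^{tr}$ is immediately an eigenvector with eigenvalue $r$, and that all the remaining action of $M$ is carried by the $2\times 2$ block in the first and third coordinates, namely $B = \begin{pmatrix} -(1+r) & k \\ -k & 1 \end{pmatrix}$. Everything then reduces to analyzing $B$, which is the natural organizing step.

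Next I would compute the characteristic polynomial of $B$. Its trace is $-(1+r)+1 = -r$, and, using the hypothesis $k^2 = 1+r-2r^2$, its determinant is $-(1+r)+k^2 = -2r^2$. Hence the eigenvalues of $B$ solve $\rho^2 + r\rho - 2r^2 = (\rho+2r)(\rho-r) = 0$, giving $\rho = -2r$ and $\rho = r$. Combined with the eigenvalue $r$ already produced by $v_2$, this shows $M$ has eigenvalue $-2r$ with algebraic multiplicity one and $r$ with algebraic multiplicity two, as claimed.

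For the eigenvectors I would verify the stated vectors directly rather than solving linear systems, since the candidates are handed to us. I would substitute the $(x,z)$-components $(1+2r,\,k)^{tr}$ into $(B+2rI)$ and $(k,\,1+2r)^{tr}$ into $(B-rI)$ and check each product vanishes. In both cases the antisymmetric off-diagonal pattern $\begin{pmatrix} * & k \\ -k & * \end{pmatrix}$ annihilates one component automatically, while the other collapses to the identity $k^2 = 1+r-2r^2$ after expanding $(r-1)(1+2r)$ or $(1-r)(1+2r)$. This confirms $v_1$ and $v_3$ up to scale, and the prefactor $\tfrac{1}{\sqrt{k^2+(1+2r)^2}}$ simply normalizes them to unit length.

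Finally, for the basis claim I would note that $v_2 = e_2$ lies outside the $(x,z)$-plane spanned by $v_1$ and $v_3$, so it suffices to check that $v_1$ and $v_3$ are independent, i.e. that $(1+2r)^2 - k^2 \neq 0$. Substituting $k^2 = 1+r-2r^2$ yields $(1+2r)^2 - k^2 = 3r(1+2r)$, which is strictly positive for $0 < r \leq 1$. The one point deserving care is that the eigenvalue $r$ has algebraic multiplicity two: I would emphasize that exhibiting the two manifestly independent eigenvectors $v_2$ and $v_3$ for $\rho = r$ shows the geometric multiplicity also equals two, so $M$ is diagonalizable and $\{v_1,v_2,v_3\}$ is a genuine basis. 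The argument is otherwise routine linear algebra; the only real bookkeeping is the collection of algebraic identities, every one of which collapses once $k^2 = 1+r-2r^2$ is used, with the range $0 < r \leq 1$ entering precisely at the nondegeneracy of the eigenbasis.
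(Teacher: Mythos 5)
Your proposal is correct and follows essentially the same route as the paper's proof, which likewise verifies the characteristic polynomial $(\rho+2r)(\rho-r)^2$, checks $Mv_i=\rho_i v_i$ directly, and notes linear independence; your reduction to the $2\times 2$ block $B$ in the first and third coordinates is just a cleaner bookkeeping of the same computation. The one small added value in your write-up is making explicit that $(1+2r)^2-k^2=3r(1+2r)>0$ for $0<r\leq 1$, which is where the paper's unstated "the set is linearly independent" actually gets used.
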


\begin{proof}
It is elementary linear algebra to compute, using the condition $k^2=1+r-2r^2$, that
\begin{equation}
    \det\left(\rho I_3-M\right)
    =
    (\rho+2r)(\rho-r)^2,
\end{equation}
and that
\begin{align}
    Mv_1 &= -2rv_1 \\
    Mv_2 &=r v_2 \\
    Mv_3 &= rv_3.
\end{align}
Finally, we observe that the set 
$\{v_1,v_2,v_3\}$ is linearly independent, 
and therefore forms of a basis of $\mathbb{R}^3$,
and this completes the proof.
\end{proof}

We will now prove Theorem \ref{LagrangianIntro}, with the result broken into two pieces.

\begin{theorem} \label{LangragianThmBody}
Suppose $y_0=c_1 v_1+c_2v_2+c_3v_3$, and
\begin{equation}
    \partial_t y(t)=u(y(t),t),
\end{equation}
where 
\begin{equation}
    u(x,t)=M(t)x,
\end{equation}
and for all $0\leq t<T_{max}=\frac{1}{r\lambda_0}$,
\begin{equation}
    M(t)=\frac{\lambda_0}{1-r\lambda_0 t}
    \left(\begin{array}{ccc}
         -(1+r)& 0 & k  \\
         0& r & 0 \\
         -k &0 & 1
    \end{array}
    \right).
\end{equation}
Then for all $0\leq t<T_{max}$,
\begin{equation}
    y(t)=c_1 (1-r\lambda_0 t)^2v_1
    +c_2\frac{1}{1-r\lambda_0t}v_2
    +c_3 \frac{1}{1-r\lambda_0t} v_3.
\end{equation}
\end{theorem}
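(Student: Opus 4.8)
The plan is to exploit that the time-dependent matrix factors as $M(t)=\lambda(t)M_0$, where $\lambda(t)=\frac{\lambda_0}{1-r\lambda_0 t}$ and $M_0$ is exactly the constant matrix appearing in Proposition~\ref{Eigenvectors}. Consequently the linear nonautonomous system $\partial_t y = M(t)y$ decouples completely in the time-independent eigenbasis $\{v_1,v_2,v_3\}$ furnished by that proposition, and the problem reduces to three scalar linear ODEs that can be integrated explicitly.

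First I would record, using Proposition~\ref{Eigenvectors}, that $M(t)v_1=-2r\lambda(t)v_1$, $M(t)v_2=r\lambda(t)v_2$, and $M(t)v_3=r\lambda(t)v_3$, and that $\{v_1,v_2,v_3\}$ is a basis of $\mathbb{R}^3$. I would then write $y(t)=a_1(t)v_1+a_2(t)v_2+a_3(t)v_3$ with $a_i\in C^1$. Since the vectors $v_i$ are constant in $t$ and linearly independent, substituting this expansion into $\partial_t y=M(t)y$ and matching coefficients yields the uncoupled system
\begin{equation}
    a_1'=-2r\lambda(t)a_1,\qquad a_2'=r\lambda(t)a_2,\qquad a_3'=r\lambda(t)a_3,
\end{equation}
subject to $a_i(0)=c_i$.

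Next I would integrate. The only computation needed is the antiderivative
\begin{equation}
    \int_0^t \lambda(\tau)\diff\tau=-\frac{1}{r}\ln\left(1-r\lambda_0 t\right),
\end{equation}
which is valid for $0\le t<T_{max}=\frac{1}{r\lambda_0}$ since $r>0$ there. Exponentiating the scalar solutions gives $a_1(t)=c_1(1-r\lambda_0 t)^2$, $a_2(t)=c_2(1-r\lambda_0 t)^{-1}$, and $a_3(t)=c_3(1-r\lambda_0 t)^{-1}$. Substituting back into the expansion reproduces the claimed formula for $y(t)$.

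I expect no genuine analytic difficulty here. The coefficients $M(t)$ are continuous on $[0,T_{max})$, so the initial value problem has a unique $C^1$ solution on that interval; hence it would in fact suffice merely to verify that the proposed formula solves the ODE and satisfies $y(0)=y_0$, with uniqueness doing the rest. The only point demanding care is correctly tracking the exponents when exponentiating $\int_0^t\lambda$: the eigenvalue $-2r$ produces $(1-r\lambda_0 t)^{2}$ while the (doubled) eigenvalue $r$ produces $(1-r\lambda_0 t)^{-1}$. Assembling the three coordinate solutions into matrix form then recovers the companion statement $Y(y_0,t)=QD(t)Q^{-1}y_0$ of Theorem~\ref{LagrangianIntro}, with $Q=[\,v_1\ v_2\ v_3\,]$ and $D(t)=\operatorname{diag}\big((1-r\lambda_0 t)^2,(1-r\lambda_0 t)^{-1},(1-r\lambda_0 t)^{-1}\big)$.
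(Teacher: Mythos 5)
Your proposal is correct and follows essentially the same route as the paper: both rest on Proposition \ref{Eigenvectors} to decouple the system in the constant eigenbasis $\{v_1,v_2,v_3\}$, the only cosmetic difference being that you integrate the three scalar ODEs forward while the paper simply verifies that the stated formula satisfies $\partial_t y = M(t)y$ with $y(0)=y_0$. Your explicit appeal to uniqueness for the linear initial value problem with continuous coefficients is a small but genuine improvement, since the paper's verification alone only shows the formula is \emph{a} solution rather than \emph{the} solution asserted in the theorem.
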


\begin{proof}
We will begin by letting
\begin{equation} \label{LagrangianMap}
    y(t)=
    c_1 (1-r\lambda_0 t)^2v_1
    +c_2\frac{1}{1-r\lambda_0t}v_2
    +c_3 \frac{1}{1-r\lambda_0t} v_3,
\end{equation}
for all $0\leq t<T_{max}$.
It is clear that $y(0)=y_0$, and that
$u(y(t),t)=M(t)y(t)$, so it suffices to show that
for all $0\leq t<T_{max}$,
\begin{equation}
    \partial_t y(t)=M(t)y(t).
\end{equation}
Applying Proposition \ref{Eigenvectors},
we can see that
\begin{align}
    M(t)v_1 &= \frac{-2r\lambda_0}{1-r\lambda_0 t}v_1 \\
    M(t)v_2 &= \frac{r\lambda_0}{1-r\lambda_0 t}v_2 \\
    M(t)v_3 &= \frac{r\lambda_0}{1-r\lambda_0 t}v_3.
\end{align}
Therefore we can compute that
\begin{equation}
    M(t)y(t)=-2c_1r\lambda_0(1-r\lambda_0 t)v_1
    +c_2\frac{r\lambda_0}{(1-r\lambda_0 t)^2}v_2
    ++c_3\frac{r\lambda_0}{(1-r\lambda_0 t)^2}v_3
\end{equation}
Likewise, differentiating \eqref{LagrangianMap}, we find that
\begin{equation}
    \partial_t y(t)=
    -2c_1r\lambda_0(1-r\lambda_0 t)v_1
    +c_2\frac{r\lambda_0}{(1-r\lambda_0 t)^2}v_2
    ++c_3\frac{r\lambda_0}{(1-r\lambda_0 t)^2}v_3.
\end{equation}
We have now verified that for all $0\leq t<T_{max}$,
\begin{equation}
    \partial_t y(t)=u(y(t),t),
\end{equation}
and this completes the proof.
\end{proof}

\begin{corollary} \label{LagrangeCor}
Let $Y(y_0,t)$ be the Lagrangian map associated with the velocity field
\begin{equation}
    u(x,t)=M(t)x,
\end{equation}
where for all $0\leq t<T_{max}=\frac{1}{r\lambda_0}$,
\begin{equation}
    M(t)=\frac{\lambda_0}{1-r\lambda_0 t}
    \left(\begin{array}{ccc}
         -(1+r)& 0 & k  \\
         0& r & 0 \\
         -k &0 & 1
    \end{array}
    \right).
\end{equation}
That is let
\begin{equation}
    \partial_t Y(y_0,t)=u(Y(y_0,t),t)
\end{equation}
and
\begin{equation}
    Y(y_0,0)=y_0.
\end{equation}
Then for all $0\leq t<T_{max}$,
\begin{equation}
    Y(y_0,t)=QD(t)Q^{-1}y_0,
\end{equation}
where 
\begin{equation}
    D(t)=\left(\begin{array}{ccc}
         (1-r\lambda_0t)^2 &0 &0  \\
         0 & \frac{1}{1-r\lambda_0t} &0 \\
         0 &0 & \frac{1}{1-r\lambda_0t}
    \end{array}
    \right),
\end{equation}
and
\begin{equation}
    Q=\left(\begin{array}{ccc}
         \frac{1+2r}{\sqrt{k^2+(1+2r)^2}} &0 &\frac{k}{\sqrt{k^2+(1+2r)^2}}  \\
         0 & 1 &0 \\
         \frac{k}{\sqrt{k^2+(1+2r)^2}} &0 & \frac{1+2r}{\sqrt{k^2+(1+2r)^2}}
    \end{array}
    \right),
\end{equation}
\end{corollary}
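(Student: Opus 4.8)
The plan is to derive this corollary directly from Theorem \ref{LangragianThmBody} by recasting its componentwise conclusion as a single matrix identity. First I would observe that the columns of $Q$ are precisely the eigenvectors $v_1, v_2, v_3$ produced in Proposition \ref{Eigenvectors}: reading off the three columns of the displayed matrix $Q$ reproduces $v_1$, $v_2$, and $v_3$ respectively. Since Proposition \ref{Eigenvectors} guarantees that $\{v_1, v_2, v_3\}$ is a basis of $\mathbb{R}^3$, the matrix $Q$ is invertible, and $Q^{-1}$ acts as the coordinate map sending a vector to its list of coefficients in the eigenbasis.

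Next, given arbitrary initial data $y_0$, I would expand it in the eigenbasis as $y_0 = c_1 v_1 + c_2 v_2 + c_3 v_3$, which in matrix form reads $y_0 = Q c$ with $c = (c_1, c_2, c_3)^{tr}$, so that $c = Q^{-1} y_0$. Theorem \ref{LangragianThmBody} then supplies the trajectory through $y_0$ explicitly as
\begin{equation*}
    y(t) = c_1 (1 - r\lambda_0 t)^2 v_1 + c_2 \frac{1}{1 - r\lambda_0 t} v_2 + c_3 \frac{1}{1 - r\lambda_0 t} v_3.
\end{equation*}
Recognizing the three scalar factors as exactly the diagonal entries of $D(t)$, and recalling that multiplying $Q$ on the right by a diagonal matrix rescales its columns, this expression is precisely $y(t) = Q D(t) c = Q D(t) Q^{-1} y_0$.

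Finally, I would invoke uniqueness of solutions to the linear, time-dependent ODE $\partial_t Y = M(t) Y$ on $[0, T_{max})$: by definition the flow map $Y(y_0, t)$ is the unique solution with initial value $Y(y_0, 0) = y_0$, while the formula $Q D(t) Q^{-1} y_0$ solves the same initial value problem (it equals $y_0$ at $t = 0$ since $D(0) = I_3$, and it satisfies the ODE by Theorem \ref{LangragianThmBody}). Hence the two coincide for every $y_0$, which is the claimed identity. I expect no genuine obstacle here, since the substantive analysis was already carried out in Theorem \ref{LangragianThmBody}; the corollary is purely a change-of-basis repackaging, whose only content is the identification of $Q$ as the eigenvector matrix and the assembly of the three independent scalar evolutions into the diagonal matrix $D(t)$.
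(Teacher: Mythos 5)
Your proposal is correct and follows essentially the same route as the paper's proof: identify the columns of $Q$ with the eigenvectors $v_1,v_2,v_3$ from Proposition \ref{Eigenvectors}, write $Q^{-1}y_0$ as the coordinate vector $(c_1,c_2,c_3)^{tr}$ in the eigenbasis, compute $QD(t)Q^{-1}y_0 = c_1(1-r\lambda_0 t)^2 v_1 + c_2\frac{1}{1-r\lambda_0 t}v_2 + c_3\frac{1}{1-r\lambda_0 t}v_3$, and invoke Theorem \ref{LangragianThmBody}. Your explicit appeal to uniqueness of the linear ODE flow is a slightly cleaner way to finish than the paper's direct verification, but it is not a substantively different argument.
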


\begin{proof}
We will begin by setting
\begin{equation}
    Y(y_0,t)=QD(t)Q^{-1}y_0,
\end{equation}
for all $y_0\in\mathbb{R}^3, 0\leq t<T_{max}$.
It is easy to see that for all 
$y_0\in\mathbb{R}^3, Y(y_0,0)=y_0,$
so it only remains to show that for all $y_0\in\mathbb{R}^3,$
and for all $0\leq t<T_{max}$, 
\begin{equation}
    \partial_t Y(y_0,t)=u(Y(y_0,t),t)
\end{equation}
We will begin by fixing $y_0\in\mathbb{R}^3$.
Applying Proposition \ref{Eigenvectors}, 
using the fact that $\{v_1,v_2,v_3\}$ 
is a basis for $\mathbb{R}^3$,
let $y_0=c_1v_1+c_2v_2+c_3v_3$.

Next we observe that $Q$ is the change of basis matrix for the change from the eigenbasis $\{v_1,v_2,v_3\}$
to the standard basis $\{e_1,e_2,e_3\}$.
Likewise we can see that $Q^{-1}$ is the change of basis matrix for the change from the standard basis $\{e_1,e_2,e_3\}$
to the eigenbasis $\{v_1,v_2,v_3\}$.
Therefore it is clear that
\begin{equation}
    Q^{-1}y_0=\left(\begin{array}{c}
         c_1  \\
          c_2 \\
          c_3
    \end{array}
    \right)
\end{equation}
gives the coordinates of $y_0$ in the eigenbasis,
and so
\begin{equation}
    y_0=c_1v_1+c_2v_2+c_3v_3
\end{equation}
We can also compute that
\begin{equation}
    QD(t)Q^{-1}y_0=
    c_1 (1-r\lambda_0 t)^2v_1
    +c_2\frac{1}{1-r\lambda_0t}v_2
    +c_3 \frac{1}{1-r\lambda_0t} v_3,
\end{equation}
Applying Theorem \ref{LangragianThmBody}, we can conclude that
\begin{equation}
    \partial_t Y(y_0,t)=u(Y(y_0,t),t),
\end{equation}
and this completes the proof.
\end{proof}

We will now consider the evolution of several circles under the map $Y(\cdot,t)$. These examples will provide some insight into the geometric character of the flow.
The circle in the plane given by $\spn\{v_2,v_3\}$ is stretched out towards infinity by the flow. The stretching out of the circle is the only deformation, as the shape of the circle is preserved.

\begin{proposition} \label{PropExpand}
Let $C_{R}$ be the circle in the plane $\spn\{v_2,v_3\}$
with radius $R$,
\begin{equation}
    C_R=\{R\cos(\theta)v_2+R\sin(\theta)v_3:
    -\pi<\theta\leq \pi\},
\end{equation}
and let $Y(\cdot,t)$ be the Lagrangian map from Corollary \ref{LagrangeCor}.
Then for all $0\leq t<T_{max}$, 
\begin{equation}
    Y(\cdot,t)(C_{R_0})=C_{\frac{R_0}{1-r\lambda_0 t}}
    =\left\{\frac{R_0}{1-r\lambda_0 t}\cos(\theta)v_2
    +\frac{R_0}{1-r\lambda_0 t}\sin(\theta)v_3:
    -\pi<\theta\leq \pi\right\}.
\end{equation}
\end{proposition}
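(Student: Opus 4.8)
The plan is to use the explicit formula for the Lagrangian flow established in Theorem \ref{LangragianThmBody}, which describes how a point decomposed in the eigenbasis $\{v_1, v_2, v_3\}$ evolves in time. The key structural observation is that the plane $\spn\{v_2, v_3\}$ is the eigenspace for the eigenvalue $\rho = r$ of $M$, so that $v_2$ and $v_3$ are stretched by the \emph{same} time-dependent factor $\frac{1}{1-r\lambda_0 t}$, while the $v_1$-component (which would carry the compression factor $(1-r\lambda_0 t)^2$) is absent for points starting in this plane.

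First I would take an arbitrary point of $C_{R_0}$, written as $y_0 = R_0\cos(\theta) v_2 + R_0\sin(\theta) v_3$ for some $-\pi < \theta \leq \pi$; this corresponds to the eigenbasis coordinates $c_1 = 0$, $c_2 = R_0\cos(\theta)$, $c_3 = R_0\sin(\theta)$. Next I would apply Theorem \ref{LangragianThmBody} directly to compute $Y(y_0, t)$, obtaining
\[
    Y(y_0, t) = \frac{R_0}{1-r\lambda_0 t}\cos(\theta) v_2 + \frac{R_0}{1-r\lambda_0 t}\sin(\theta) v_3,
\]
since the $c_1 = 0$ term drops out and both surviving terms carry the common factor $\frac{1}{1-r\lambda_0 t}$. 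This is precisely the point of $C_{R_0/(1-r\lambda_0 t)}$ at the same angle $\theta$, so $Y(\cdot, t)$ maps each point of $C_{R_0}$ into $C_{R_0/(1-r\lambda_0 t)}$.

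To conclude equality of the two sets rather than just an inclusion, I would note that the assignment $\theta \mapsto \theta$ is a bijection of the parameter interval $(-\pi, \pi]$ onto itself, so that as $\theta$ ranges over $(-\pi, \pi]$ the images $Y(y_0, t)$ sweep out all of $C_{R_0/(1-r\lambda_0 t)}$. There is no genuine obstacle here: the entire content of the statement is the observation that the flow acts as a pure, shape-preserving dilation by $\frac{1}{1-r\lambda_0 t}$ on the eigenplane $\spn\{v_2, v_3\}$, which is immediate once one has the diagonalized flow formula from Theorem \ref{LangragianThmBody}. The only point deserving a word of care is that ``circle'' here is defined relative to the (generally non-orthonormal) frame $\{v_2, v_3\}$, so the shape preservation is a statement about these coordinates and not about Euclidean geometry; but since both $C_{R_0}$ and its image are defined through the same frame, the conclusion follows verbatim from the coordinate computation.
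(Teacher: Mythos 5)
Your proposal is correct and follows exactly the paper's argument: both apply Theorem \ref{LangragianThmBody} to a point $R_0\cos(\theta)v_2+R_0\sin(\theta)v_3$ (with vanishing $v_1$-component) and read off the common dilation factor $\frac{1}{1-r\lambda_0 t}$. Your additional remarks on set equality and the non-orthonormality of the frame are fine but not needed beyond what the paper records.
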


\begin{proof}
Applying Theorem \ref{LangragianThmBody},
we can see that for all $0\leq t<T_{max}$,
\begin{equation}
    Y(R\cos(\theta)v_2+r\sin(\theta)v_3,t)=
    \frac{R_0}{1-r\lambda_0 t}\cos(\theta)v_2
    +\frac{R_0}{1-r\lambda_0 t}\sin(\theta)v_3.
\end{equation}
This completes the proof.
\end{proof}

Now we will consider the evolution a circle in the $yz$-plane under the flow $Y(\cdot,t)$. When $0<r<1$, this circle is not aligned with the eigenbasis, and so its behaviour will be more complicated. It will rotate and deform, in addition to spreading out to infinity.

\begin{proposition} \label{PropRotate}
Let $C_R$ be the unit circle in the $yz$-plane $\spn\{e_1,e_2\}$,
with radius $R$,
\begin{equation}
    C_R=\{R\cos(\theta)e_2+R\sin(\theta)e_3:
    -\pi<\theta\leq \pi\},
\end{equation}
and let $Y(\cdot,t)$ be the Lagrangian map from Corollary \ref{LagrangeCor}.
Furthermore, let
\begin{align}
    c_1 &= -\frac{k\sqrt{k^2+(1+2r)^2}}{3r(1+2r)} \\
    c_3 &= \frac{\sqrt{k^2+(1+2r)^2}}{3r}
\end{align}
Then for all $0\leq t<T_{max}$, 
\begin{equation}
    Y(\cdot,t)(C_{R_0})
    =\left\{c_1R_0(1-r\lambda_0 t)^2 v_1
    +\frac{R_0}{1-r\lambda_0 t}\cos(\theta)v_2
    +c_3\frac{R_0}{1-r\lambda_0 t}\sin(\theta)v_3:
    -\pi<\theta\leq \pi\right\}.
\end{equation}
\end{proposition}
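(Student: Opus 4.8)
The plan is to reduce everything to Theorem \ref{LangragianThmBody}, which already describes exactly how $Y(\cdot,t)$ acts on a point once that point is expressed in the eigenbasis $\{v_1,v_2,v_3\}$ of Proposition \ref{Eigenvectors}: it scales the $v_1$-coordinate by $(1-r\lambda_0 t)^2$ and each of the $v_2,v_3$-coordinates by $\frac{1}{1-r\lambda_0 t}$. Since the map is diagonal in this basis, the only substantive work is to rewrite the parametrization of $C_{R_0}$, which is given in the standard basis, in terms of $\{v_1,v_2,v_3\}$.

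First I would express $e_2$ and $e_3$ in the eigenbasis. For $e_2$ this is immediate, since $v_2=e_2$. For $e_3$, I observe that $v_1$ and $v_3$ both lie in $\spn\{e_1,e_3\}$, so I may write $e_3=c_1 v_1+c_3 v_3$ and determine $c_1,c_3$ by matching components. Writing $N=\sqrt{k^2+(1+2r)^2}$, the $e_1$-component yields $c_1(1+2r)+c_3 k=0$ and the $e_3$-component yields $c_1 k+c_3(1+2r)=N$.

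The key simplification, and essentially the only nontrivial step, is the algebraic identity
\begin{equation}
    (1+2r)^2-k^2=3r(1+2r),
\end{equation}
which follows by substituting the constraint $k^2=1+r-2r^2$. Using it to solve the $2\times 2$ system gives exactly the claimed coefficients $c_1=-\frac{k\sqrt{k^2+(1+2r)^2}}{3r(1+2r)}$ and $c_3=\frac{\sqrt{k^2+(1+2r)^2}}{3r}$; the hypothesis $0<r\leq 1$ ensures $r\neq 0$ and $1+2r\neq 0$, so these are well defined. With this decomposition, a generic point of $C_{R_0}$ becomes
\begin{equation}
    R_0\cos(\theta)e_2+R_0\sin(\theta)e_3
    =R_0 c_1\sin(\theta)\,v_1+R_0\cos(\theta)\,v_2+R_0 c_3\sin(\theta)\,v_3,
\end{equation}
and applying Theorem \ref{LangragianThmBody} to these eigenbasis coordinates produces the image set directly.

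I do not expect any analytic obstacle, since the diagonalization from Corollary \ref{LagrangeCor} carries all the dynamical content. The only place demanding care is the linear-algebra bookkeeping: correctly inverting the non-orthogonal change of basis, tracking the normalization factor $\sqrt{k^2+(1+2r)^2}$, and pinning down the sign of $c_1$. The geometric upshot worth emphasizing is that because the eigenbasis is not orthogonal, a circle lying in the coordinate $yz$-plane acquires a nonzero $v_1$-component under the decomposition, which is precisely what produces the rotation and deformation alluded to in Remark \ref{RotationRemarkIntro}, in contrast to the purely self-similar stretching of Proposition \ref{PropExpand}.
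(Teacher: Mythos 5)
Your proposal is correct and takes essentially the same route as the paper: express $e_2=v_2$ and $e_3=c_1v_1+c_3v_3$ in the eigenbasis, then apply Theorem \ref{LangragianThmBody} coordinatewise; the paper merely asserts the decomposition of $e_3$, whereas you supply the $2\times 2$ system and the identity $(1+2r)^2-k^2=3r(1+2r)$ that pins down $c_1,c_3$. One small point in your favor: your computation correctly produces a factor of $\sin(\theta)$ on the $v_1$-component of the image, which the proposition's displayed formula (and the paper's own final display, which also drops the $c_1$) omits --- a typo that your argument implicitly corrects.
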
.

\begin{proof}
We will begin by changing from the standard basis to the eigenbasis. We will compute that 
\begin{align}
    e_2&=v_2 \\
    e_3&=c_1v_1+c_3v_3.
\end{align}
Therefore we can see that
\begin{equation}
    C_{R_0}=\{c_1 R_0\sin(\theta)v_1
    +R_0\cos(\theta)v_2+c_3R_0\sin(\theta)v_3:
    -\pi<\theta\leq \pi\},
\end{equation}
and applying Theorem \ref{LangragianThmBody},
we find that for all $0\leq t<T_{max}$,
\begin{equation}
    Y(c_1 R_0\sin(\theta)v_1
    +R_0\cos(\theta)v_2+c_3R_0\sin(\theta)v_3,t)
    =
    R_0(1-r\lambda_0 t)^2 v_1
    +\frac{R_0}{1-r\lambda_0 t}\cos(\theta)v_2
    +c_3\frac{R_0}{1-r\lambda_0 t}\sin(\theta)v_3.
\end{equation}
This completes the proof.
\end{proof}

\begin{remark}
In Remark \ref{RotationRemarkIntro}, we discussed the apparent lack of rotation of the Lagrangian map---even in the case with vorticity when $0<r<1$---and explained that the rotation is embedded in the fact that the eigenbasis is no longer orthogonal.
The example in Proposition \ref{PropRotate} we can very clearly see that rotation is occurring. The circle begins in the $yz$-plane, but rotates about the $y$-axis as it evolves, until it approaches the plane spanned by ${v_2,v_3}$ as $t\to T_{max}$.
This rotation is generated by the vorticity, which is in the in the $y$-direction, so rotation generated by the vorticity is in fact present in the Lagrangian formulation.
\end{remark}

\begin{remark}
We should also note that we can see from the example in Proposition \ref{PropRotate} the fact that $v_2=e_2$ is the intermediate eigenvalue of the strain $S$ for $0<r<1$. In Proposition \ref{PropExpand}, the circle expands in the plane given by $\spn\{v_2,v_3\}$ with no other deformation. In this sense the fact that the deformation matrix is stretching stronger in the $z$ direction than the $y$ direction is obscured. However, when we start with a circle in the $yz$-plane, it still maps to infinity in the plane given by the $\spn\{v_2,v_3\}$, but this time it is asymptotically an ellipse, not a circle. As $t\to T_{max}$, $Y(\cdot,t)(C_R)$ expands out to infinity and its shape asymptotically is an ellipse, with $v_3$ as the major axis and $v_2=e_2$ as the minor axis,
because $c_3>1$.
The fact that the $y$-axis is the minor axis corresponds to the fact that the stretching due to the strain is weaker in this direction than in $z$-axis.
This connection between variable rates of stretching and rotation shows the central importance of the alignment of the vorticity with the eigenvector corresponding to the intermediate eigenvalue of the strain in the Lagrangian dynamics.
\end{remark}

\begin{remark}
We will now show that for the blowup solutions in Theorem \ref{BlowupFamIntro}, the Seregin and S\v{v}er\'ak condition is satisfied, with $p$ becoming unbounded below and 
$p+\frac{1}{2}|u|^2$ becoming unbounded above
as $t\to T_{max}$.
Suppose $u(x,t)=M(t)x$, where
\begin{equation}
    M(t)=\frac{\lambda_0}{1-r\lambda_0 t}
    \left(\begin{array}{ccc}
         -(1+r)& 0 & k  \\
         0& r & 0 \\
         -k &0 & 1
    \end{array}
    \right),
\end{equation}
with $0< r\leq 1$ and
$k^2=1+r-2r^2$.
Note that $u(x,t)$ is one of the solutions from the family of finite-time blowup solutions in Theorem \ref{BlowupFamIntro}, and $u$ blows up in at time $T_{max}=\frac{1}{r\lambda_0}.$
From Proposition \ref{Eigenvectors}, we know that the eigenvalues of $M(t)$ are $\frac{-2r\lambda_0}{1-r\lambda_0t}, 
\frac{r\lambda_0}{1-r\lambda_0t},
\frac{r\lambda_0}{1-r\lambda_0t}$,
and so 
\begin{equation}
    \frac{1}{6}\tr\left(M(t)^2\right)=
    \frac{r^2\lambda_0^2}{(1-r\lambda_0t)}.
\end{equation}
From this we may conclude that
\begin{equation}
    p(x,t)=-\frac{r^2\lambda_0^2}{(1-r\lambda_0t)}|x|^2,
\end{equation}
and so $p$ becomes unbounded from below as $t\to T_{max}$.
Likewise, again applying Proposition \ref{Eigenvectors},
we find that 
\begin{equation}
    M(t)v_1=-\frac{2r\lambda_0}{(1-r\lambda_0 t)}v_1.
\end{equation}
Considering the velocity at points along the $v_1$ axis,
we compute that
\begin{align}
    u(|x|v_1,t)
    &=
    M(t)|x|v_1 \\
    &=
    -|x|\frac{2r\lambda_0}{(1-r\lambda_0 t)}v_1
\end{align}
In turn we can compute that 
\begin{equation}
    \left(p+\frac{1}{2}|u|^2\right)(|x|v_1,t)=
    \frac{r^2\lambda_0^2}{(1-r\lambda_0t)}|x|^2,
\end{equation}
and so $p+\frac{1}{2}|u|^2$ becomes unbounded above 
as $t\to T_{max}$.
\end{remark}

\section{Stability of the blowup: the general case}.

In this section we will prove Theorem \ref{GeneralCase}, proving the finite-time blowup of solutions of the matrix Navier--Stokes equation for all initial data $M^0$ such that the eigenvalues of $M$ satisfy
\begin{equation}
    \mathrm{Re}(\lambda_1)<\mathrm{Re}(\lambda_2) 
    \leq \mathrm{Re}(\lambda_3).
\end{equation}
We will break this down into three cases. First, the case where $M^0$ is diagonalizable over $\mathbb{R}$, in which case the result reduces to one previously proven by the author in \cites{MillerStrain}. Second, the case where $M^0$ is diagonalizable over $\mathbb{C}$, but not $\mathbb{R}$, and in which the complex eigenvalue has a positive real part, in which case the Jordan normal form of $M^0$ must be given by
\begin{equation} \label{ComplexNormalForm}
    J=\left(\begin{array}{ccc}
         -2\lambda& 0 & 0  \\
         0 & \lambda-ia & 0 \\
         0 & 0 & \lambda+ia
    \end{array}\right).
\end{equation}
Finally, we will consider the case where $M^0$ is not diagonalizable over $\mathbb{C}$, and the positive eigenvalue is repeated, meaning that the Jordan normal form of $M^0$ must be given by
\begin{equation} \label{RepeatedNormalForm}
    J=\left(\begin{array}{ccc}
         -2\lambda& 0 & 0  \\
         0 & \lambda & 1 \\
         0 & 0 & \lambda
    \end{array}\right).
\end{equation}

Note that these three cases are the only possible condition because $M^0$ is a real valued matrix, and therefore if it has a complex eigenvalue, then the complex conjugate must also be an eigenvalue. 
This immediately implies that if $\lambda+ia$ is an eigenvalue, then $\lambda-ia$ must be as well, and so if $M^0$ is diagonalizable over $\mathbb{C}$, but not $\mathbb{R}$, then the Jordan normal form must be given by \eqref{ComplexNormalForm}.
Likewise, if $M^0$ is not diagonalizable over $\mathbb{C}$, then its characteristic polynomial must have a repeated root. This repeated root must be real, otherwise its complex conjugate would also be a repeated root, which would lead to a combined algebraic multiplicity of $4$, contradicting 
$M^0\in\mathbb{R}^{3 \times 3}, \tr\left(M^0\right)=0$.
The condition $\tr\left(M^0\right)=0$ then implies that all eigenvalues of $M^0$ are real.
This means that if $M^0$ is not diagonalizable over $\mathbb{C}$ and there is a positive repeated root, then the Jordan normal form must be given by \eqref{RepeatedNormalForm}.

In order to prove Theorem \ref{GeneralCase} in each of these cases, we will first derive an evolution equation for the Jordan normal form.

\begin{proposition} \label{JordanDiffEq}
Suppose the initial data has the Jordan normal form
\begin{equation}
    M^0=QJ^0 Q^{-1}.
\end{equation}
If $J\in C^1\left([0,T_{max});\mathbb{R}^{3\times 3}\right)$
satisfies the equation
\begin{equation}
    \partial_t J+ J^2-\frac{1}{3}\tr\left(J^2\right)I_3=0,
\end{equation}
then 
\begin{equation}
    M(t)=Q J(t) Q^{-1}
\end{equation}
satisfies the matrix Navier--Stokes equation,
where $Q$ is a constant matrix.
\end{proposition}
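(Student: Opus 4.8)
The plan is to exploit the fact that the matrix Navier--Stokes operator $N(X)=\partial_t X+X^2-\frac{1}{3}\tr\left(X^2\right)I_3$ is \emph{equivariant} under conjugation by a constant invertible matrix. Because $Q$ does not depend on time, differentiation commutes with the conjugation, so $\partial_t M=Q\left(\partial_t J\right)Q^{-1}$. First I would record the three elementary identities that render the conjugation harmless: the middle factor $Q^{-1}Q$ cancels to give $M^2=QJ^2Q^{-1}$; the trace is invariant under similarity by cyclicity, so $\tr\left(M^2\right)=\tr\left(J^2\right)$; and the identity is fixed by conjugation, $QI_3Q^{-1}=I_3$.

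Combining these, the entire left-hand side factors through the conjugation:
\[
\partial_t M+M^2-\tfrac{1}{3}\tr\left(M^2\right)I_3
=Q\left(\partial_t J+J^2-\tfrac{1}{3}\tr\left(J^2\right)I_3\right)Q^{-1}.
\]
By hypothesis the bracketed quantity vanishes identically on $[0,T_{max})$, and conjugating the zero matrix yields the zero matrix, so the first equation of the matrix Navier--Stokes system holds for $M$.

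It then remains only to verify the trace-free condition $\tr(M)=0$. Since $\tr(M)=\tr\left(QJQ^{-1}\right)=\tr(J)$, it suffices to show $\tr(J)\equiv 0$, and this is where the only genuine bookkeeping lies: I would obtain it by taking the trace of the $J$-equation, where $\tr(I_3)=3$ makes the two $\tr\left(J^2\right)$ contributions cancel, leaving $\partial_t\tr(J)=0$. Hence $\tr(J)$ is constant, and its initial value is $\tr\left(J^0\right)=\tr\left(QJ^0Q^{-1}\right)=\tr\left(M^0\right)=0$ in the setting of Theorem \ref{GeneralCase}; therefore $\tr(J)\equiv 0$ and $\tr(M)\equiv 0$. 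The argument presents no real obstacle---it is pure similarity invariance---so the main point is simply to confirm that the ODE for $J$ propagates trace-freeness, which reduces the study of $M$ to the study of its Jordan form $J$ and sets up the subsequent case analysis.
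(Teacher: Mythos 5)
Your proof is correct and follows essentially the same route as the paper: both arguments rest on the conjugation-equivariance identities $\partial_t M = Q(\partial_t J)Q^{-1}$, $M^2 = QJ^2Q^{-1}$, and $\tr(M^2)=\tr(J^2)$, so that the whole operator factors through $Q(\cdot)Q^{-1}$. Your additional verification that $\tr(J)$ is propagated by the ODE (via $\tr(I_3)=3$ cancelling the two trace contributions) is a small point the paper's proof leaves implicit, but it does not change the argument.
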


\begin{proof}
Begin by observing that
\begin{equation}
    \partial_t M(t)= Q\partial_t J(t)Q^{-1},
\end{equation}
We can also see that 
\begin{equation}
    M(t)^2=QJ(t)^2Q^{-1}
\end{equation}
and
\begin{equation}
    \tr\left(M(t)^2\right)=
    \tr\left(J(t)^2\right).
\end{equation}
Therefore can compute that
\begin{align}
    \partial_t M(t)+M(t)^2-\frac{1}{3}\tr\left(M(t)\right)^2
    &=
    Q\partial_t J(t)Q^{-1}+Q J(t)^2Q^{-1}
    -\frac{1}{3}\tr\left(J(t)^2\right)I_3 \\
    &=
    Q \left(
    \partial_t J+ J^2-\frac{1}{3}\tr\left(J^2\right)I_3
    \right) Q^{-1} \\
    &=0,
\end{align}
and this completes the proof.
\end{proof}

\begin{remark}
We will note that Proposition \ref{JordanDiffEq} shows that the generalized eigenbasis for $M^0$ does not evolve in time, it is only the matrix $J(t)$---which is $M(t)$ represented in this eigenbasis---which evolves in time. 
\end{remark}

\begin{theorem}
Suppose $M^0$ is diagonalizable over $\mathbb{R}$ with $\lambda_1<\lambda_2 \leq \lambda_3$. Without loss of generality we will take $M^0=QDQ^{-1}$,
\begin{equation}
    D=\left(
    \begin{array}{ccc}
         -(1+r_0)\lambda_0 &0 &0  \\
         0& r_0\lambda_0 & 0 \\
         0 &0 &\lambda_0
    \end{array}\right).
\end{equation}
with $\lambda_0>0, -\frac{1}{2}<r_0\leq 1$.
Let $M\in C^1\left([0,T_{max});\mathbb{R}^{3\times 3}\right)$
be the solution of the matrix Navier--Stokes equation with initial data $M^0$.
Then $M(t)=QD(t)Q^{-1}$,
where 
\begin{equation}
    D=\left(
    \begin{array}{ccc}
         -1-r(t) &0 &0  \\
         0& r(t) & 0 \\
         0 &0 &1
    \end{array}\right) \lambda(t).
\end{equation}
Furthermore, $M$ blows up at finite-time $T_{max}<+\infty$
with
\begin{equation}
    \lim_{t\to T_{max}} r(t)=1
\end{equation}
and
\begin{equation}
    \lambda(t) \sim \frac{1}{T_{max}-t}
\end{equation}
as $t\to T_{max}$.
\end{theorem}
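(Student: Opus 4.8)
The plan is to collapse the matrix equation onto a scalar system via the Jordan form and then read off blowup from an elementary phase-line analysis. By Proposition \ref{JordanDiffEq}, since $M^0=QDQ^{-1}$ with $Q$ constant and $D$ diagonal, it suffices to solve $\partial_t D+D^2-\frac13\tr(D^2)I_3=0$ with diagonal initial data. The vector field $-D^2+\frac13\tr(D^2)I_3$ is diagonal whenever $D$ is and is polynomial, hence locally Lipschitz, so by uniqueness $D(t)$ stays diagonal, while $\frac{d}{dt}\tr D=-\tr(D^2)+\tr(D^2)=0$ keeps it trace free. Writing $D(t)=\lambda(t)\operatorname{diag}(-(1+r(t)),r(t),1)$, the general trace-free diagonal matrix normalized by its largest entry, and substituting into the componentwise equations $\partial_t d_i=-d_i^2+\frac13\sum_j d_j^2$, I recover exactly the $k\equiv 0$ specialization of the parameter system of Proposition \ref{ParameterODE}:
\begin{align*}
    \partial_t\lambda&=\tfrac13(2r^2+2r-1)\lambda^2,\\
    \partial_t r&=\tfrac13\lambda\,(r+2)(2r+1)(1-r).
\end{align*}
This is precisely the diagonal (equivalently symmetric, zero-vorticity) case already treated in \cite{MillerStrain}, which one could simply cite; below I indicate a self-contained route.

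The next step is a monotonicity analysis of the eigenvalue gaps, which does most of the work. With $d_1=-(1+r)\lambda$, $d_2=r\lambda$, $d_3=\lambda$ and $\sum_i d_i=0$, the scalar equations give the linear homogeneous relations $\partial_t(d_3-d_2)=-(d_3-d_2)(1+r)\lambda$ and $\partial_t(d_2-d_1)=+(d_2-d_1)\lambda$. The first keeps $d_3-d_2=(1-r)\lambda$ positive and decreasing, so $r<1$ for all time; the second keeps $d_2-d_1=(1+2r)\lambda$ positive and increasing, so $r>-\tfrac12$ for all time and, using $1+2r\le 3$, bounds $\lambda$ below by the positive constant $c_0=\tfrac13(1+2r_0)\lambda_0$. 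On $(-\tfrac12,1)$ the factor $(r+2)(2r+1)(1-r)$ is strictly positive, so $r$ increases monotonically and has a limit $r_*\le 1$. The case $r_0=1$ is immediate, since then $r\equiv 1$, $\partial_t\lambda=\lambda^2$, and $\lambda(t)=\lambda_0/(1-\lambda_0 t)$ with $T_{max}=1/\lambda_0$.

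Finally I would show $r_*=1$ and extract the blowup rate. The lower bound $\lambda\ge c_0$ forces $r\to 1$: if $r_*<1$ then $(r+2)(2r+1)(1-r)\ge\delta>0$ throughout, so $\partial_t r\ge\frac{\delta}{3}\lambda\ge\frac{\delta c_0}{3}$, which is incompatible with $r$ bounded when $T_{max}=\infty$; and when $T_{max}<\infty$ the blowup $\lambda\to\infty$ forces the coefficient $\tfrac13(2r^2+2r-1)$ to be eventually positive, giving $\partial_t\lambda\ge\kappa\lambda^2$ and hence $\lambda\gtrsim(T_{max}-t)^{-1}$, whose non-integrability contradicts $\int_0^{T_{max}}\lambda\le\frac{3}{\delta}(r_*-r_0)<\infty$. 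Thus $r\to 1$, and the same Riccati bound (now with coefficient $\to 1$) shows $T_{max}<\infty$. For the sharp rate I pass to $1/\lambda$, which satisfies $\partial_t(1/\lambda)=-\tfrac13(2r^2+2r-1)\to -1$ as $t\to T_{max}$; since $1/\lambda\to 0$, integrating from $t$ to $T_{max}$ gives $1/\lambda(t)=\int_t^{T_{max}}\tfrac13(2r^2+2r-1)\,ds\sim(T_{max}-t)$, i.e.\ $\lambda(t)\sim 1/(T_{max}-t)$. The step requiring the most care is exactly this coordination of the three conclusions — that $r\to 1$, that $T_{max}<\infty$, and that the rate is sharply quadratic — because a priori $\lambda$ could decay and stall $r$ below $1$; the monotone quantity $(1+2r)\lambda$, which simultaneously bounds $\lambda$ away from zero and certifies $r\to 1$, is what rules this out.
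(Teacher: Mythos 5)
Your proposal is correct in outline and is genuinely more self-contained than the paper's own proof: after the common first step (Proposition \ref{JordanDiffEq} reduces everything to the diagonal equation), the paper simply observes that the resulting system is the zero-vorticity case treated in \cite{MillerStrain} and cites that result, modulo the reparametrization $\tilde{r}=-\lambda_1/\lambda_3$ versus $r=\lambda_2/\lambda_3$. You instead rebuild the blowup argument from scratch, and your key device --- the pairwise differences $d_i-d_j$, each satisfying the linear equation $\partial_t(d_i-d_j)=-(d_i-d_j)(d_i+d_j)$, so that the ordering is preserved, $r$ stays in $(-\tfrac12,1]$, and the monotone quantity $(1+2r)\lambda$ bounds $\lambda$ below by $c_0=\tfrac13(1+2r_0)\lambda_0$ --- is a clean way to coordinate the three conclusions without importing anything external. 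Your ODE reduction and the identification with the $k\equiv 0$ case of Proposition \ref{ParameterODE} check out, as does the asymptotic rate via $\partial_t(1/\lambda)=-\tfrac13(2r^2+2r-1)\to-1$, which is the same $B(t)$-type argument the paper uses in its appendix for the vorticity cases.

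One step is stated with the Riccati comparison pointing the wrong way. In the sub-case $r_*<1$, $T_{\max}<\infty$, you write that $\partial_t\lambda\ge\kappa\lambda^2$ gives $\lambda\gtrsim(T_{\max}-t)^{-1}$. It does not: integrating $\partial_t\lambda\ge\kappa\lambda^2$ forward produces a lower bound that diverges at some $T_1\ge T_{\max}$ (this is how one proves $T_{\max}<\infty$, which you already assumed here), while combined with $\lambda\to\infty$ it yields the \emph{upper} bound $\lambda(t)\le\frac{1}{\kappa(T_{\max}-t)}$. The non-integrable lower bound you need comes from the complementary inequality: since $2r^2+2r-1\le 3$ on $r\le 1$, one has $\partial_t\lambda\le\lambda^2$, hence $\partial_t(1/\lambda)\ge-1$, and together with $\liminf_{t\to T_{\max}}1/\lambda=0$ this gives $\lambda(t)\ge\frac{1}{T_{\max}-t}$, contradicting $\int_0^{T_{\max}}\lambda\le\tfrac3\delta(r_*-r_0)<\infty$. (Even more simply: $\lambda\in L^1$ and $|\partial_t\log\lambda|=|b|\lambda\le\lambda$ force $\lambda$ bounded on $[0,T_{\max})$, which together with the boundedness of $r$ lets the solution continue past $T_{\max}$, a contradiction.) With that one implication repaired, the argument is complete; the final steps --- $r\to1$, then $\partial_t\lambda\ge\kappa\lambda^2$ used in the correct direction to force $T_{\max}<\infty$, then the sharp rate --- are fine.
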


\begin{proof}
Applying Proposition \ref{JordanDiffEq}, this reduces to proving blowup and these asymptotic estimates for the equation 
\begin{equation}
    \partial_t D(t)+D(t)^2
    -\frac{1}{3} \tr\left(D(t)^2\right)I_3=0.
\end{equation}
The author proved precisely these results in \cites{MillerStrain}, although in that paper using the parameterization
\begin{equation}
    \tilde{r}=-\frac{\lambda_1}{\lambda_3},
\end{equation}
rather than
\begin{equation}
    r=\frac{\lambda_2}{\lambda_3}.
\end{equation}
This difference in parameterization means that the asymptotics are stated as 
\begin{equation}
    \lim_{t\to T_{max}}\tilde{r}(t)=2.
\end{equation}
This is equivalent because
\begin{equation}
    \lim_{t\to T_{max}}
    \frac{-\lambda_1(t)}{\lambda_3(t)}=2
\end{equation}
if and only if
\begin{equation}
    \lim_{t\to T_{max}}
    \frac{\lambda_2(t)}{\lambda_3(t)}=1.
\end{equation}
\end{proof}

\begin{theorem}
Suppose $M^0$ is diagonalizable over $\mathbb{C}$ with eigenvalues $-2\lambda_0, \lambda_0-ia_0, \lambda_0+ia_0$ 
where $\lambda_0>0, a_0>0$.
Then $M^0$ can be written as $M=QDQ^{-1}$ where
\begin{equation}
    D=\left(\begin{array}{ccc}
         -2\lambda_0 & 0 & 0  \\
         0 & \lambda_0-ia_0 &0 \\
         0 &0 & \lambda_0+ia_0
    \end{array} \right).
\end{equation}
Let $M\in C^1\left([0,T_{max});\mathbb{R}^{3\times 3}\right)$
be the solution of the matrix Navier--Stokes equation with initial data $M^0$.
Then $M(t)=Q D(t) Q^{-1}$,
where
\begin{equation}
    D(t)=\left(\begin{array}{ccc}
         -2\lambda(t) & 0 & 0  \\
         0 & \lambda(t)-ia(t) &0 \\
         0 &0 & \lambda(t)+ia(t)
    \end{array} \right).
\end{equation}
Furthermore there is finite-time blowup at $T_{max}<+\infty$
with the asymptotics
\begin{equation}
    \lim_{t\to T_{max}} a(t)=0
\end{equation}
and
\begin{equation}
    \lambda(t) \sim \frac{1}{T_{max}-t},
\end{equation}
as $t \to T_{max}$.
\end{theorem}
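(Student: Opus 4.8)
The plan is to invoke Proposition \ref{JordanDiffEq}, which reduces the matrix equation to the diagonal ODE
\[
\partial_t D + D^2 - \tfrac{1}{3}\tr\left(D^2\right)I_3 = 0.
\]
Since $D$ is diagonal this decouples into a system for the three diagonal entries $\mu_j$, namely $\partial_t\mu_j = -\mu_j^2 + \tfrac13\sum_k\mu_k^2$, with $\sum_j\mu_j$ conserved and equal to $0$. The first task is to show the complex-conjugate structure is preserved in time. Because the system has real coefficients, $t\mapsto\overline{\mu_2(t)}$ is also a solution, and it agrees with $\mu_3$ at $t=0$; by uniqueness $\mu_3(t)=\overline{\mu_2(t)}$, so writing $\mu_2 = \lambda - ia$ forces $\mu_3 = \lambda + ia$ and $\mu_1 = -\mu_2-\mu_3 = -2\lambda$. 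Substituting this ansatz and separating real and imaginary parts (using $\tr(D^2)=6\lambda^2-2a^2$) collapses the system to
\[
\partial_t\lambda = \lambda^2 + \tfrac{1}{3}a^2, \qquad \partial_t a = -2\lambda a,
\]
confirming that the stated form of $D(t)$ is self-consistent.

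With this reduced system the qualitative blowup is immediate: since $\lambda_0 > 0$ and $\partial_t\lambda \ge \lambda^2$, comparison with $\partial_t\tilde\lambda = \tilde\lambda^2$ shows $\lambda$ is increasing and blows up no later than $t = 1/\lambda_0$, so $T_{max} < +\infty$. Because $\partial_t a = -2\lambda a$ with $\lambda > 0$, the quantity $|a|$ is nonincreasing and bounded by $a_0$; since the eigenvalue magnitudes are $2\lambda$ and $\sqrt{\lambda^2 + a^2}$, boundedness of $a$ forces $\lambda(t)\to+\infty$ as $t\to T_{max}$, as otherwise the solution would extend past $T_{max}$.

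The key technical step is to prove that $\int_0^{T_{max}}\lambda\,ds$ diverges. For this I would use $\partial_t\lambda \le \lambda^2 + \tfrac13 a_0^2$ together with $\lambda\ge\lambda_0$ to bound $\frac{d}{dt}\ln\lambda = \partial_t\lambda/\lambda \le \lambda + \tfrac{a_0^2}{3\lambda_0}$; integrating and using that $\ln\lambda(t)\to+\infty$ while the remainder stays bounded forces $\int_0^{T_{max}}\lambda = +\infty$. Divergence of this integral then yields $a(t) = a_0\exp\!\left(-2\int_0^t\lambda\right)\to 0$, giving $\lim_{t\to T_{max}}a(t)=0$. Moreover, setting $w = a/\lambda$ one computes $\frac{d}{dt}\ln|w| = -\lambda\left(3 + \tfrac13 w^2\right)\le -3\lambda$, so the same divergence gives $w = a/\lambda\to 0$.

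Finally, to pin down the rate I would work with $f = 1/\lambda$, for which the first equation gives $-\partial_t f = \partial_t\lambda/\lambda^2 = 1 + \tfrac13 w^2$. Integrating from $t$ to $T_{max}$ and using $f(T_{max}^-) = 0$ yields
\[
\frac{1}{\lambda(t)} = (T_{max}-t) + \tfrac13\int_t^{T_{max}} w(s)^2\,ds;
\]
since $w\to 0$, the last integral is $o(T_{max}-t)$, so $\lambda(t)(T_{max}-t)\to 1$, i.e.\ $\lambda(t)\sim\frac{1}{T_{max}-t}$. I expect the main obstacle to be the divergence of $\int_0^{T_{max}}\lambda$, since the vanishing of $a$, the vanishing of $a/\lambda$, and the negligibility of the $\tfrac13 a^2$ term that produces the clean blowup rate all rest on that single estimate.
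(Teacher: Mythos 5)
Your proposal is correct and follows essentially the same route as the paper: reduce to the diagonal system $\partial_t\lambda=\lambda^2+\tfrac13 a^2$, $\partial_t a=-2\lambda a$ via Proposition \ref{JordanDiffEq}, get $T_{max}\le 1/\lambda_0$ by comparison with $\lambda'=\lambda^2$, read off the rate from $\frac{1}{\lambda(t)}=\int_t^{T_{max}}\bigl(1+\tfrac13 a^2/\lambda^2\bigr)$, and send $a\to 0$ through $a=a_0\exp\bigl(-2\int_0^t\lambda\bigr)$. The only (immaterial) difference is how the divergence of $\int_0^{T_{max}}\lambda$ is obtained --- you use a $\ln\lambda$ estimate, while the paper extracts the quantitative lower bound $\lambda(t)\ge c/(T_{max}-t)$ from the same integral identity; both close the argument.
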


\begin{proof}
We will begin by computing that
\begin{equation}
    D^2=\left(\begin{array}{ccc}
         4\lambda^2 & 0 & 0  \\
         0 & \lambda^2-a^2-2i a\lambda  &0 \\
         0 &0 & \lambda^2+\frac{1}{3}a^2+2i a\lambda 
    \end{array} \right),
\end{equation}
and therefore
\begin{equation}
    \frac{1}{3}\tr\left(D^2\right)=2\lambda^2-\frac{2}{3}a^2.
\end{equation}
This means we can compute that
\begin{equation}
    -D^2+\frac{1}{3}\tr\left(D^2\right)I_3
    =\left(\begin{array}{ccc}
         -2\lambda^2 -\frac{2}{3}a^2 & 0 & 0  \\
         0 & \lambda^2+\frac{1}{3}a^2-2ia\lambda  &0 \\
         0 &0 & \lambda^2-a^2+2i \lambda a
    \end{array} \right).
\end{equation}
Applying Proposition \ref{JordanDiffEq}, we know that
\begin{equation}
    \partial_tD+D^2-\frac{1}{3}\tr\left(D^2\right)I_3=0,
\end{equation}
and so we can conclude that $\lambda, a$ satisfy the differential equations
\begin{align}
    \partial_t\lambda &=\lambda^2+\frac{1}{3}a^2 \\
    \partial_ta &=-2\lambda a, \label{EQ3}
\end{align}
Clearly we can see that $\lambda$ is increasing and $a$ is decreasing, and also that for all $0<t<T_{max}$
\begin{equation}
    \partial_t\lambda>\lambda^2.
\end{equation}
Integrating this differential inequality we find
for all $0<t<T_{max}$,
\begin{equation}
    \lambda(t)>\frac{\lambda_0}{1-\lambda_0t},
\end{equation}
and so we have that $T_{max}\leq \frac{1}{\lambda_0}$.

Next we observe that
\begin{equation}
    \partial_t \lambda(t)
    =\left(1+\frac{1}{3}\frac{a^2}{\lambda^2}\right)
    \lambda^2,
\end{equation}
and consequently, integrating this differential inequality from $t$ to $T_{max}$, we can see that for all $0<t<T_{max}$
\begin{equation} \label{EQ4}
    \frac{1}{\lambda(t)}
    =\int_t^{T_{max}}\left(1+\frac{1}{3}
    \frac{a(\tau)^2}{\lambda(\tau)^2}\right) 
    \diff\tau.
\end{equation}
Clearly we have
\begin{equation}
    \lim_{t\to T_{max}} \frac{a(t)}{\lambda(t)}=0,
\end{equation}
and so we can see that
\begin{equation}
    \lim_{t\to T_{max}} (T_{max}-t)\lambda(t)=1,
\end{equation}
and consequently
\begin{equation}
    \lambda(t) \sim \frac{1}{T_{max}-t},
\end{equation}
as $t \to T_{max}$.

Now we will show that $\lim_{t\to T_{max}}a(t)=0$.
Plugging into the differential equation \eqref{EQ3},
we find that tor all $0<t<T_{max}$,
\begin{equation}
    a(t)=a_0\exp\left(-2\int_0^t
    \lambda(\tau) \diff\tau \right).
\end{equation}
Plugging into \eqref{EQ4}, and using the fact that for all $0<t<T_{max}$,
\begin{equation}
    1+\frac{a(t)^2}{3\lambda(t)^2}
    < 
    1+\frac{a_0^2}{3\lambda_0^2},
\end{equation}
we find that for all $0<t<T_{max}$,
\begin{equation}
    \lambda(t)>\frac{1}
    {\left(1+\frac{a_0^2}{3\lambda_0^2}\right)
    (T_{max}-t)}.
\end{equation}
Integrating this upper bound, we find that for all $0<t<T_{max}$,
\begin{equation}
    -2\int_0^t \lambda(\tau) \diff\tau 
    <
    2\frac{2}{\left(1+\frac{a_0^2}{3\lambda_0^2}\right)}
    (T_{max}-t),
\end{equation}
which in turn implies that for all $0<t<T_{max}$,
\begin{equation}
    a(t)<a_0 (T_{max}-t)^\frac{2}
    {\left(1+\frac{a_0^2}{3\lambda_0^2}\right)}.
\end{equation}
This clearly implies that
\begin{equation}
    \lim_{t \to T_{max}} a(t)=0,
\end{equation}
and so this completes the proof.
\end{proof}

\begin{theorem}
Suppose $M^0$ is not diagonalizable over $\mathbb{C}$ and has a positive repeated root $\lambda_0>0$. Then $M^0$ can be written in Jordan normal form as $M=QJQ^{-1}$, where
\begin{equation}
    J=\left(
    \begin{array}{ccc}
         -2\lambda_0 & 0 &0  \\
         0 & \lambda_0 & 1 \\
         0 & 0 & \lambda_0
    \end{array} \right).
\end{equation}
Let $M\in C^1\left([0,T_{max});\mathbb{R}^{3\times 3}\right)$
be the solution of the matrix Navier--Stokes equation with initial data $M^0$.
Then $M(t)=QJ(t)Q^{-1}$,
with
\begin{equation}
    J(t)=\left(\begin{array}{ccc}
         -2\lambda(t)& 0 & 0 \\
         0 & \lambda(t) & r(t) \\ 
         0 & 0 & \lambda(t)
    \end{array}\right),
\end{equation}
where
\begin{align}
    \lambda(t)&=\frac{\lambda_0}{1-\lambda_0 t} \\
    r(t)&= (1-\lambda_0 t)^2.
\end{align}
In particular, this implies that there is blowup at $T_{max}=\frac{1}{\lambda_0}$.
\end{theorem}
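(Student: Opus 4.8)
The plan is to invoke Proposition \ref{JordanDiffEq}, which reduces the problem to solving the reduced matrix ODE
\begin{equation}
    \partial_t J + J^2 - \tfrac{1}{3}\tr\left(J^2\right)I_3 = 0
\end{equation}
for $J(t)$ with initial data $J^0$, since then $M(t) = QJ(t)Q^{-1}$ automatically solves the matrix Navier--Stokes equation. Because the right-hand side of this ODE is a quadratic polynomial in the entries of $J$, it is locally Lipschitz, so the solution is unique. It therefore suffices to exhibit a solution of the claimed upper-triangular form, verify that it satisfies the equation, and check that it matches the initial data.

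Concretely, I would make the ansatz that $J(t)$ retains the structure with diagonal entries $-2\lambda(t), \lambda(t), \lambda(t)$ and $(2,3)$-entry $r(t)$, and then compute directly. A short calculation gives $J^2$ with diagonal $\left(4\lambda^2, \lambda^2, \lambda^2\right)$ and $(2,3)$-entry $2\lambda r$, so that $\tr\left(J^2\right) = 6\lambda^2$ and hence $\tfrac{1}{3}\tr\left(J^2\right) = 2\lambda^2$. Reading off the entries of the ODE, the three diagonal equations all collapse to the single scalar equation $\partial_t \lambda = \lambda^2$, while the $(2,3)$-entry yields the linear equation $\partial_t r = -2\lambda r$.

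Solving $\partial_t \lambda = \lambda^2$ with $\lambda(0) = \lambda_0 > 0$ gives
\begin{equation}
    \lambda(t) = \frac{\lambda_0}{1-\lambda_0 t},
\end{equation}
which blows up as $t \to \tfrac{1}{\lambda_0}$, establishing $T_{max} = \tfrac{1}{\lambda_0}$. Substituting this into $\partial_t r = -2\lambda r$ and integrating the resulting linear equation with $r(0) = 1$ gives $r(t) = (1-\lambda_0 t)^2$. Since the constructed $J(t)$ has the correct initial data and satisfies the reduced ODE, uniqueness identifies it as the solution, and the formula for $\lambda$ yields the blowup conclusion at once.

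The main point requiring care is not an analytic obstacle but a consistency check: one must confirm that the three diagonal entries of the equation do not impose conflicting demands on $\lambda$. They all reduce to $\partial_t \lambda = \lambda^2$ precisely because the trace subtraction $\tfrac{1}{3}\tr\left(J^2\right) = 2\lambda^2$ is arranged so, and the off-diagonal entry decouples into a linear equation whose coefficient is already determined by $\lambda$. Once this self-consistency is verified, the remaining steps are immediate.
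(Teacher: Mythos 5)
Your proposal is correct and follows essentially the same route as the paper: reduce via Proposition \ref{JordanDiffEq} to the ODE for $J$, compute $J^2-\frac{1}{3}\tr\left(J^2\right)I_3$ entrywise to obtain $\partial_t\lambda=\lambda^2$ and $\partial_t r=-2\lambda r$, and integrate. Your explicit appeal to local Lipschitz uniqueness to justify the ansatz is a small but welcome addition that the paper leaves implicit.
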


\begin{proof}
We will begin by observing that
\begin{equation}
    J^2-\frac{1}{3}\tr\left(J^2\right)I_3
    =
    \left(\begin{array}{ccc}
         2\lambda^2& 0 & 0 \\
         0 & -\lambda^2 & 2\lambda r \\ 
         0 & 0 & -\lambda^2
    \end{array}\right),
\end{equation}
and so we can see that
$\lambda,r$ satisfy the differential equations
\begin{align}
    \partial_t\lambda &=\lambda^2 \label{EQ1} \\
    \partial_t r&=-2\lambda r. \label{EQ2}
\end{align}
Integrating the differential equation \eqref{EQ1}, we find that
\begin{equation}
    \lambda(t)=\frac{\lambda_0}{1-\lambda_0t}.
\end{equation}
Integrating the differential equation \eqref{EQ2}, we find that
\begin{align}
    r(t)&=\exp\left(-2\int_0^t
    \frac{\lambda_0}{1-\lambda_0 \tau}\diff\tau\right) \\
    &=
    \exp\left(2\log(1-\lambda_0 t)\right) \\
    &=
    (1-\lambda_0 t)^2,
\end{align}
and this completes the proof.
\end{proof}

\section*{Acknowledgments}
This publication was supported in part by the Fields Institute for Research in the Mathematical Sciences while the author was in residence during the Fall 2020 semester. Its contents are solely the responsibility of the author and do not necessarily represent the official views of the Institute.
This material is based upon work supported by the National Science Foundation under Grant No. DMS-1440140 while the author participated in a program that was hosted by the Mathematical Sciences Research Institute in Berkeley, California, during the Spring 2021 semester.

\bibliography{Bib}
\bibliographystyle{amsplain}

\appendix

\section{Asymptotics of blowup: the vorticity/eigenframe alignment case}

In this section, we will prove Theorem \ref{MainTheorem}, with each of the different conditions on the parameters listed in Theorem \ref{MainTheorem} considered separately in order to make the proofs easier to follow.
The results in section are less general than the results in section 5, however in certain cases the asymptotics are more precise. See in particular the blowup at $T=+\infty$ in the borderline case in Theorem \ref{BoundaryCase}.

\begin{theorem} \label{BlowupFromBelow}
Suppose we have initial data
\begin{equation}
    S^0=
    \left(
    \begin{array}{ccc}
         -(r_0+1)\lambda_0& 0 & 0  \\
          0& r_0 \lambda_0 &0 \\
          0 &0 &\lambda_0 
    \end{array}
    \right)
\end{equation}
and 
\begin{equation}
    \omega^0= 
    \left(
    \begin{array}{c}
         0  \\
         2k_0\lambda_0 \\
         0
    \end{array}
    \right),
\end{equation}
where $\lambda_0>0, -\frac{1}{2}<r_0< 1$ and 
$k_0^2<1+r_0-2r_0^2$,
and suppose the pair 
$S,\omega\in C^1\left([0,T_{max})\right)$
is a solution of the strain-vorticity pair equation where
the strain is given by
\begin{equation}
    S(t)=
    \left(
    \begin{array}{ccc}
         -(r(t)+1)\lambda(t)& 0 & 0  \\
          0& r(t) \lambda(t) &0 \\
          0 &0 &\lambda(t) 
    \end{array}
    \right),
\end{equation}
and a vorticity given by
\begin{equation}
    \omega(t)= 
    \left(
    \begin{array}{c}
         0  \\
         2 k(t) \lambda(t) \\
         0
    \end{array}
    \right).
\end{equation}
Then $T_{max}<+\infty$ and 
\begin{align*}
    \lim_{t\to T_{max}} r(t)&=r_\infty, \\
    \lim_{t\to T_{max}}k(t)&=k_\infty, \\
    \lim_{t\to T_{max}}g(t)&=0, \\
    \lim_{t\to T_{max}}\lambda(t)&=+\infty,
\end{align*}
where
\begin{align}
    r_\infty&=\frac{1-4m_0^2
    +3\sqrt{1-4m_0^2}}{2m_0^2+4},
    \\
    k_\infty&= m_0 \left(
    \frac{9+3\sqrt{1-4m_0^2}}{2m_0^2+4}\right).
\end{align}
and
\begin{equation}
    m_0=\frac{k_0}{r_0+2}
\end{equation}
\end{theorem}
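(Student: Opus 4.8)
The plan is to reduce the three-parameter system to a single autonomous equation governing $r$. Since $r_0>-\tfrac12$, in particular $r_0\neq-2$, Proposition \ref{PhaseSpace} gives $k(t)=m_0(r(t)+2)$ for all $t$, so by Corollary \ref{SimplifiedSystem} the evolution is governed by $\partial_t r=\tfrac13\lambda(r+2)g(r)$ and $\partial_t\lambda=(r-\tfrac13 g)\lambda^2$, where $g(r)=1+r-2r^2-m_0^2(r+2)^2$ is now a downward-opening quadratic in $r$ alone. First I would record the algebraic facts. A short computation shows $4k_0^2<4(1+r_0-2r_0^2)=(r_0+2)^2-9r_0^2\le(r_0+2)^2$, so $m_0^2=k_0^2/(r_0+2)^2<\tfrac14$; hence $1-4m_0^2>0$, the two roots of $g$ are real, the larger root equals $r_\infty$, and $g>0$ precisely on the open interval between the roots. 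Since the hypothesis $k_0^2<1+r_0-2r_0^2$ is exactly $g(r_0)>0$, the initial value $r_0$ lies strictly between the roots.

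Next I would establish monotonicity and trapping. Because $\lambda>0$ and $r+2\ge r_0+2>0$, the sign of $\partial_t r$ equals the sign of $g(r)$; starting from $g(r_0)>0$ and using that $r_\infty$ is an equilibrium that cannot be crossed by uniqueness, $r$ is strictly increasing and confined to $[r_0,r_\infty)$, hence converges to a limit $r_*\in(r_0,r_\infty]$. The heart of the argument is to show $r_*=r_\infty$. For this I would pass to the rescaled time $\tau(t)=\int_0^t\lambda\,ds$, under which $r$ obeys the autonomous scalar equation $\tfrac{dr}{d\tau}=\tfrac13(r+2)g(r)$; equivalently, since $r$ is monotone, I would parametrize everything by $r$ and use $\tfrac{dt}{dr}=\tfrac{3}{\lambda(r+2)g(r)}$ together with $\tfrac{d\ln\lambda}{dr}=\tfrac{3r-g}{(r+2)g}$.

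Suppose toward a contradiction that $r_*<r_\infty$. Then $g$ is bounded below by a positive constant on $[r_0,r_*]$, so the integral defining $\ln(\lambda/\lambda_0)$ converges as $r\to r_*$, giving a finite positive limit $\lambda_*=\lim_{t\to T_{max}}\lambda\in(0,\infty)$. If $T_{max}<\infty$ this means $(r,\lambda)$ extends continuously to $t=T_{max}$ with finite data, and smoothness of the vector field lets us continue past $T_{max}$, contradicting maximality; if $T_{max}=\infty$ then $\partial_t r\to\tfrac13\lambda_*(r_*+2)g(r_*)>0$, forcing $r\to\infty$ and contradicting $r_*<\infty$. Either way $r_*=r_\infty$, whence $r\to r_\infty$, $k=m_0(r+2)\to m_0(r_\infty+2)=k_\infty$, and $g\to g(r_\infty)=0$.

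It remains to read off the blowup of $\lambda$ and the finiteness of $T_{max}$ from the behaviour near $r_\infty$. Because $g(r)\to0$ like $g'(r_\infty)(r-r_\infty)$ while $3r-g\to3r_\infty>0$, the integrand $\tfrac{3r-g}{(r+2)g}$ is non-integrable at $r_\infty$, so $\ln\lambda\to+\infty$ and $\lambda\to+\infty$. Finally, near $r_\infty$ one has $\tfrac{dt}{dr}=\tfrac{3}{\lambda(r+2)g}\sim\tfrac{1}{r_\infty}\tfrac{1}{\lambda}\tfrac{d\ln\lambda}{dr}=-\tfrac{1}{r_\infty}\tfrac{d}{dr}\big(\tfrac1\lambda\big)$, so $t(r)$ converges as $r\to r_\infty$ and $T_{max}<\infty$; here the crucial positivity $r_\infty=\tfrac{2\sqrt{1-4m_0^2}}{3-\sqrt{1-4m_0^2}}>0$ is exactly what makes the blowup occur in finite rather than infinite time. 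I expect the main obstacle to be precisely this coupling: $\lambda$ need not be monotone, since it decreases while $-1+2r+2r^2+k^2<0$, so one cannot simply bound $\partial_t\lambda$ below by $c\lambda^2$ from the outset. The fast-time reparametrization is what decouples the geometry of $r$ from the amplitude $\lambda$ and resolves this difficulty.
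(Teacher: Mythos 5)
Your argument is correct, and it reaches the conclusion by a genuinely different route than the paper. The paper works entirely in physical time $t$: it bounds $f(r)=-1+2r+2r^2+m_0^2(r+2)^2$ from below to get $\int_0^t\lambda\geq 2\log(2/\lambda_0+t)$, proves $T_{max}<+\infty$ by a contradiction argument built on that logarithmic lower bound, and then extracts $r\to r_\infty$ from the integral identity $1/\lambda(t)=\int_t^{T_{max}}b(\tau)\,\diff\tau$ together with Taylor's theorem, which as a by-product yields an explicit algebraic rate $|r(t)-r_\infty|\lesssim(T_{max}-t)^{d/c}$. You instead use the monotonicity of $r$ to reparametrize by $r$, reducing everything to the quadratures $\frac{\diff\ln\lambda}{\diff r}=\frac{3r-g}{(r+2)g}$ and $\frac{\diff t}{\diff r}=\frac{3}{3r-g}\left(-\frac{\diff}{\diff r}\frac{1}{\lambda}\right)$, so that convergence of $r$ to the endpoint $r_\infty$, divergence of $\lambda$, and finiteness of $T_{max}$ all become integrability statements at $r=r_\infty$. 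This is shorter and more transparent, and it has the additional merit of actually proving $\lim_{t\to T_{max}}\lambda(t)=+\infty$ (the paper invokes this fact mid-proof without an explicit argument, leaving it to the implicit continuation criterion), as well as isolating exactly where the positivity $r_\infty=\frac{2\sqrt{1-4m_0^2}}{3-\sqrt{1-4m_0^2}}>0$ enters. What it gives up is the quantitative rate of convergence of $r$ to $r_\infty$ that the paper's Taylor-expansion step provides. Two small points to tighten in a final write-up: the asymptotic relation $\frac{\diff t}{\diff r}\sim\frac{1}{r_\infty}\left(-\frac{\diff}{\diff r}\frac{1}{\lambda}\right)$ should be replaced by the one-sided bound $\frac{\diff t}{\diff r}\leq\frac{2}{r_\infty}\left(-\frac{\diff}{\diff r}\frac{1}{\lambda}\right)$ valid for $r$ close enough to $r_\infty$ (using $3r-g\geq\tfrac{3}{2}r_\infty$ there) before integrating; and the phrase ``cannot be crossed by uniqueness'' deserves one sentence noting that $r\equiv r_\infty$ together with the associated $\lambda$-equation is itself a solution of the planar system, so the invariant line $\{r=r_\infty\}$ cannot be reached in finite time.
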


\begin{proof}
We begin by applying Corollary \ref{SimplifiedSystem}, which implies that the dynamics are completely determined the simplified system
\begin{align}
        \partial_t\lambda &=
    \frac{1}{3}\left(-1+2r+2r^2+m_0^2(r+2)^2\right)
    \lambda^2, \\
    \partial_t r &=
    \frac{1}{3}\lambda(r+2)
    \left(1+r-2r^2-m_0^2(r+2)^2\right).
\end{align}
We will define
\begin{equation}
    f(r)=-1+2r+2r^2+m_0^2(r+2)^2.
\end{equation}
We can see that 
\begin{equation}
    f'(r)=2+4r+m_0^2(r+2)
\end{equation},
and therefore that $f$ is an increasing function
on the interval $-\frac{1}{2}\leq r\leq 1$.
This implies that for all $-\frac{1}{2}\leq r\leq 1$,
\begin{equation}
    f(r)>f\left(-\frac{1}{2}\right)\geq -\frac{3}{2}.
\end{equation}

Therefore we can see that for all $0<t<T_{max},$
\begin{equation}
    \partial_t\lambda(t)\geq -\frac{1}{2}\lambda^2.
\end{equation}
Integrating this differential inequality we find that for all $0<t<T_{max}$,
\begin{equation}
    \lambda(t)\geq \frac{\lambda_0}
    {1+\frac{1}{2}\lambda_0 t}.
\end{equation}
Integrating this inequality we find that 
for all $0<t<T_{max}$,
\begin{equation}
    \int_0^t\lambda(\tau)\diff\tau
    \geq 
    2\log\left(\frac{2}{\lambda_0}+t\right).
\end{equation}

Next we will let
\begin{equation}
    g(r)=1+r-2r^2-m_0^2(r+2)^2.
\end{equation}
We will observe that $g$ has zeroes at 
\begin{equation}
    r=\frac{\left(1-4m_0^2\right) 
    \pm 3\sqrt{1-4m_0^2}}
    {2m_0^2+4}.
\end{equation}
Recall that 
\begin{equation}
    r_\infty=
    \frac{\left(1-4m_0^2\right) 
    + 3\sqrt{1-4m_0^2}}
    {2m_0^2+4},
\end{equation}
and let
\begin{equation}
    r_*=
    \frac{\left(1-4m_0^2\right) 
    - 3\sqrt{1-4m_0^2}}
    {2m_0^2+4}.
\end{equation}
By hypothesis, $g(r_0)>0,$ and therefore
\begin{equation}
    r_*<r_0<r_\infty.
\end{equation}
Furthermore, we can see that if $r_*<r(t)<r_\infty,$
then $r'(t)>0$.

We will now show that for all $0<t<T_{max}, r_0<r(t)<r_\infty,$ and $r$ is an increasing function.
Given the two zeroes, $r_*,r_\infty$,
we can rewrite $g$ in the form
\begin{equation}
    g(r)=C(r-r_*)(r-r_\infty),
\end{equation}
for some $C\in\mathbb{R}$.
Plugging in for $g(1)$ we find
\begin{equation}
    g(0)=Cr_*r_\infty=1-4m_0^2. 
\end{equation}
Therefore, observing that
\begin{equation}
    r_*r_\infty=-2\frac
    {\left(1-4m_0^2\right)\left(1+2m_0^2\right)}
    {\left(m_0^2+2\right)^2},
\end{equation}
we can see that
\begin{align}
    C
    &=
    \frac{1-4m_0^2}{r_* r_\infty}\\
    &=
    -\frac{1}{2}\frac{\left(m_0^2+2\right)^2}
    {\left(1+2m_0^2\right)}.
\end{align}
This means that
\begin{equation}
    g(r)=
    -\frac{1}{2}\frac{\left(m_0^2+2\right)^2}
    {\left(1+2m_0^2\right)}
    (r-r_*)(r-r_\infty).
\end{equation}
We can see, therefore, that for all $r_0<r(t)<r_\infty$,
\begin{equation}
    \partial_t (r-r_\infty) \leq 
     -\frac{1}{6}\frac{\left(m_0^2+2\right)^2}
    {\left(1+2m_0^2\right)}
    (r_\infty+2)
    (r_\infty-r_*)\lambda(r-r_\infty).
\end{equation}
Integrating this differential inequality we find that
\begin{equation}
    r(t)-r_\infty 
    \leq
    (r_0-r_\infty)
    \exp\left( -\frac{1}{6}\frac{\left(m_0^2+2\right)^2}
    {\left(1+2m_0^2\right)}
    (r_\infty+2)
    (r_\infty-r_*)
    \int_0^t\lambda(\tau) \diff\tau \right).
\end{equation}
Rearranging this implies that
\begin{equation}
    r(t)\leq r_\infty-
    (r_\infty-r_0)
        \exp\left( -\frac{1}{6}\frac{\left(m_0^2+2\right)^2}
    {\left(1+2m_0^2\right)}
    (r_\infty+2)
    (r_\infty-r_*)
    \int_0^t\lambda(\tau) \diff\tau \right).
\end{equation}
We have already shown that if $r_*<r(t)<r_\infty,$
then $r'(t)>0,$
so we can see that for all $0<t<T_{max}, r_0<r(t)<r_\infty$ and $r$ is increasing.

We will now show that $T_{max}<+\infty$. We will do this by showing that there exists a $0<T<T_{max},$
such that $f\left(r(T)\right)>0$.
We know that $f\left(-\frac{1}{2}\right)=-\frac{3}{2}$
and that $f(r_\infty)=\frac{1}{3}r_\infty$.
We know that $f$ is increasing, so this implies that
there exists a unique $-\frac{1}{2}<\Tilde{r}<r_\infty,$
such that $f(\Tilde{r})=0$.

Suppose towards contradiction that $T_{max}=+\infty$.
Then clearly for all $0<t<+\infty$,
\begin{equation}
    r(t)\leq \Tilde{r}.
\end{equation}
Otherwise there exists $0<t<+\infty$, such that
$r(t)>\Tilde{r}$. We know that $r$ is an increasing function of time, and $f$ is an increasing function for $-\frac{1}{2}<r<1$, so we can see that for all $t>T$,
\begin{equation}
    f(r(t))>f(\Tilde{r})>0.
\end{equation}
This would imply that for all $t>T,$
\begin{equation}
    \partial_t \lambda(t) \geq
    \frac{1}{3}f(\Tilde{r}) \lambda^2.
\end{equation}
This clearly contradicts $T_{max}=+\infty$,
so the hypothesis $T_{max}=+\infty$ clearly implies that for all $0<t<+\infty, r(t)\leq \Tilde{r}$.
Let 
\begin{equation}
    \epsilon=\frac{1}{3}(\Tilde{r}+2)
    \min(g(r_0),g(\Tilde{r})>0.
\end{equation}
Then we have that for all $0<t<+\infty$,
\begin{equation}
    \partial_t r\geq \epsilon \lambda.
\end{equation}
Integrating this differential inequality, we find that
for all $0<t<+\infty$,
\begin{align}
    r(t)&\geq r_0+
    \epsilon\int_0^t\lambda(\tau)\diff\tau \\
    &\geq 
    r_0+ 2\epsilon
    \log\left(\frac{2}{\lambda_0}+t\right).
\end{align}
This clearly contradicts the claim that for all $0<t<+\infty, r(t)<\Tilde{r}$, so we can conclude
that $T_{max}<+\infty$.

It now remains to show that
\begin{align}
    \lim_{t\to T_{max}} r(t)&=r_\infty \\
    \lim_{t\to T_{max}} k(t)&=k_\infty.
\end{align}
We know that $r$ is an increasing function and for all $0<t<T_{max}, r(t)<r_\infty$, so it is clearly the first limit exists and must be less than or equal to $r_\infty$,
so let
\begin{equation}
    \lim_{t\to T_{max}} r(t)=L\leq r_\infty.
\end{equation}
We will now show that $L=r_\infty$.

We will begin by letting
\begin{equation}
    b(t)=\frac{1}{3}\left(-1+2r(t)+2r(t)^2
    +m_0^2(r(t)+2)^2\right).
\end{equation}
Then we have that
\begin{equation}
    \partial_t\lambda=b(t)\lambda^2.
\end{equation}
Integrating this differential equation, we find that for all $0<t_1<t_2<T_{max}$,
\begin{equation}
\frac{1}{\lambda(t_1)}-\frac{1}{\lambda(t_2)}
=\int_{t_1}^{t_2}b(\tau)\diff\tau. 
\end{equation}
Taking the limit $t_2 \to T_{max}$, and using the fact that $\lim_{t\to T_{max}}\lambda(t)=+\infty$,
we can see that for all $0<t<T_{max},$
\begin{equation}
    \frac{1}{\lambda(t)}
    =\int_t^{T_{max}} b(\tau)\diff\tau. 
\end{equation}
For all $0\leq t \leq T_{max},$ let
\begin{equation}
    B(t)=\int_t^{T_{max}} b(\tau)\diff\tau.
\end{equation}
We can immediately see that
$B(T_{max})=0$ and that
\begin{align}
    \partial_t B(T_{max})
    &=
    -\lim_{t\to T_{max}}b(t)\\
    &=
    -\frac{1}{3}\left(-1+2L+2L^2+m_0^2(L+2)^2\right) \\
    &<0.
\end{align}
Applying Taylor's theorem, we can see that for all $0<c<\frac{1}{3}\left(-1+2L+2L^2+m_0^2(L+2)^2\right)$,
there exists $\delta>0,$ such that for all $T_{max}-\delta<t<T_{max}$,
\begin{equation}
    B(t)>c(T_{max}-t).
\end{equation}
This in turn implies that
\begin{equation}
    \lambda(t)>\frac{1}{c(T_{max}-t)}.
\end{equation}

Integrating this bound implies that
for all $T_{max}-\delta<t<T_{max}$,
\begin{align}
    \int_{T_{max}-\delta}^t \lambda(\tau)
    \diff\tau
    &>
    \frac{1}{c}\int_{T_{max}-\delta}^t
    \frac{1}{(T_{max}-\tau)} \diff\tau \\
    &=
    \frac{1}{c}\log\left(
    \frac{T_{max}-\delta}{T_{max}-t}\right).
\end{align}
Recall that for all $0<t<T_{max},$
\begin{equation}
    \partial_t (r-r_\infty)
    =
    -\frac{1}{6}\frac{\left(m_0^2+2\right)^2}
    {\left(1+2m_0^2\right)}\lambda(r+2)
    (r-r_*)\lambda (r-r_\infty)
\end{equation}
Letting
\begin{equation}
    d=-\frac{1}{6}\frac{\left(m_0^2+2\right)^2}
    {\left(1+2m_0^2\right)}(r(T_{max}-\delta)+2)
    (r(T_{max}-\delta)-r_*),
\end{equation}
we can see that
for all $T_{max}-\delta<t<T_{max}$
\begin{equation}
    \partial_t(r-r_{\infty})\geq
    -d\lambda (r-r_\infty)
\end{equation}
Therefore, we can integrate this differential inequality, and find that for all $T_{max}-\delta<t<T_{max}$,
\begin{align}
    r(t)-r_{\infty}
    &\geq 
    \left(r(T_{max}-\delta)-r_\infty\right)
    \exp\left(-d \int_{T_{max}-\delta}^t
    \lambda(\tau)\diff\tau \right) \\
    &>
    \left(r(T_{max}-\delta)-r_\infty\right)
    \exp\left(
    -\frac{d}{c}\log\left(
    \frac{T_{max}-\delta}{T_{max}-t}\right)
    \right) \\
    &=
    \left(r(T_{max}-\delta)-r_\infty\right)
    \left(
    \frac{T_{max}-\delta}{T_{max}-t}
    \right)^{-\frac{d}{c}}.
\end{align}
This means that
for all $T_{max}-\delta<t<T_{max}$,
\begin{equation}
    r_\infty-
    \left(r_\infty- r(T_{max}-\delta)\right)
    \left(
    \frac{T_{max}-t}{T_{max}-\delta}
    \right)^\frac{d}{c}
    <r(t)<r_\infty,
\end{equation}
and so we may conclude that
\begin{equation}
    \lim_{t\to T_{max}}r(t)=r_\infty.
\end{equation}

Finally, we must show that
\begin{equation}
    \lim_{t\to T_{max}}k(t)=k_\infty,
\end{equation}
Using the fact that $k(t)=m_0(r(t)+2),$
we can see that
\begin{align}
    \lim_{t\to T_{max}}k(t)
    &=
    m_0(r_\infty+2) \\
    &=
     m_0 \left(
    \frac{9+3\sqrt{1-4m_0^2}}{2m_0^2+4}\right) \\
    &=
    k_\infty.
\end{align}
This completes the proof.
\end{proof}

\begin{theorem}
Suppose we have initial data
\begin{equation}
    S^0=
    \left(
    \begin{array}{ccc}
         -(r_0+1)\lambda_0& 0 & 0  \\
          0& r_0 \lambda_0 &0 \\
          0 &0 &\lambda_0 
    \end{array}
    \right)
\end{equation}
and 
\begin{equation}
    \omega^0= 
    \left(
    \begin{array}{c}
         0  \\
         2k_0\lambda_0 \\
         0
    \end{array}
    \right),
\end{equation}
where $\lambda_0>0, r_0>0$,
and
$1+r_0-2r_0^2<k_0^2<\frac{1}{4}(r_0+2)^2$,
and suppose the pair 
$S,\omega\in C^1\left([0,T_{max})\right)$
is a solution of the strain-vorticity pair equation where
the strain is given by
\begin{equation}
    S(t)=
    \left(
    \begin{array}{ccc}
         -(r(t)+1)\lambda(t)& 0 & 0  \\
          0& r(t) \lambda(t) &0 \\
          0 &0 &\lambda(t) 
    \end{array}
    \right),
\end{equation}
and a vorticity given by
\begin{equation}
    \omega(t)= 
    \left(
    \begin{array}{c}
         0  \\
         2 k(t) \lambda(t) \\
         0
    \end{array}
    \right).
\end{equation}
Then $T_{max}<+\infty$ and 
\begin{align*}
    \lim_{t\to T_{max}} r(t)&=r_\infty, \\
    \lim_{t\to T_{max}}k(t)&=k_\infty, \\
    \lim_{t\to T_{max}}g(t)&=0, \\
    \lim_{t\to T_{max}}\lambda(t)&=+\infty,
\end{align*}
where
\begin{align}
    r_\infty&=\frac{1-4m_0^2
    +3\sqrt{1-4m_0^2}}{2m_0^2+4},
    \\
    k_\infty&= m_0 \left(
    \frac{9+3\sqrt{1-4m_0^2}}{2m_0^2+4}\right).
\end{align}
and
\begin{equation}
    m_0=\frac{k_0}{r_0+2}
\end{equation}
\end{theorem}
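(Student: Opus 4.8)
The plan is to mirror the strategy of Theorem \ref{BlowupFromBelow}, exploiting the fact that the present hypotheses place the initial data on the opposite side of the equilibrium $r_\infty$ of the $r$-dynamics. First I would invoke Corollary \ref{SimplifiedSystem} to reduce the strain-vorticity pair equation to the planar system $\partial_t\lambda = \frac{1}{3}f(r)\lambda^2$ and $\partial_t r = \frac{1}{3}\lambda(r+2)g(r)$, where $f(r) = -1+2r+2r^2+m_0^2(r+2)^2$ and $g(r)=1+r-2r^2-m_0^2(r+2)^2$, while Proposition \ref{PhaseSpace} guarantees $k(t) = m_0(r(t)+2)$ throughout. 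As in the earlier proof I would factor $g(r) = C(r-r_*)(r-r_\infty)$ with $C = -\frac{1}{2}(m_0^2+2)^2/(1+2m_0^2) < 0$, noting that the upper bound $k_0^2 < \frac{1}{4}(r_0+2)^2$ is exactly $m_0^2 < \frac{1}{4}$, which makes $r_*,r_\infty$ real with $r_* < 0 < r_\infty$. The decisive preliminary observation is that the hypothesis $k_0^2 > 1+r_0-2r_0^2$ means $g(r_0) < 0$; combined with $r_0 > 0$ and the sign pattern of $g$ (positive only on $(r_*,r_\infty)$), this forces $r_0 > r_\infty$. Thus, unlike in Theorem \ref{BlowupFromBelow}, the orbit starts above $r_\infty$.

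Next I would show that $r$ decreases monotonically and remains in $(r_\infty, r_0]$. For $r > r_\infty$ we have $(r+2) > 0$ and $g(r) < 0$, so $\partial_t r < 0$; writing the equation for $r-r_\infty$ as $\partial_t(r-r_\infty) = \frac{1}{3}\lambda(r+2)C(r-r_*)(r-r_\infty)$ exhibits it as linear in $(r-r_\infty)$ with a negative coefficient, so an exponential/Gronwall estimate keeps $r-r_\infty$ positive for all time. Finite-time blowup is then immediate, and this is precisely the step where the present case is easier than Theorem \ref{BlowupFromBelow}: since $f(r)+g(r) = 3r$ we have $f(r_\infty) = 3r_\infty > 0$, and $f$ is increasing for $r>0$, so the bound $r > r_\infty$ gives $f(r) > 3r_\infty$ and hence $\partial_t\lambda \ge r_\infty\lambda^2$. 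Comparison with the separable equation yields $\lambda(t) \ge \lambda_0/(1-r_\infty\lambda_0 t)$, so $T_{max} \le 1/(r_\infty\lambda_0) < +\infty$ and $\lambda(t) \to +\infty$ as $t \to T_{max}$, the solution being able to terminate only through blowup of $\lambda$ since $r$ stays bounded.

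It remains to pin down the limits, and this is the genuinely delicate step, identical in spirit to the corresponding part of Theorem \ref{BlowupFromBelow}. Monotonicity and the lower barrier give $\lim_{t\to T_{max}} r(t) = L$ for some $L \ge r_\infty$, and I must rule out $L > r_\infty$. To do so I would integrate $\partial_t\lambda = \frac{1}{3}f(r)\lambda^2$ to write $1/\lambda(t) = \int_t^{T_{max}}\frac{1}{3}f(r(\tau))\diff\tau =: B(t)$, using $\lambda \to +\infty$; since $B(T_{max}) = 0$ and $B'(T_{max}) = -\frac{1}{3}f(L) < 0$, Taylor's theorem gives $\lambda(t) \gtrsim (T_{max}-t)^{-1}$ near $T_{max}$, whence $\int^{T_{max}}\lambda\,\diff\tau = +\infty$. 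Feeding this into the linear estimate for $r-r_\infty$, whose coefficient $\frac{1}{3}(r+2)|C|(r-r_*)$ is bounded below by a positive constant $d$ as $r \to r_\infty$, yields $\partial_t(r-r_\infty) \le -d\lambda(r-r_\infty)$ near $T_{max}$, so $r-r_\infty$ decays like $\exp(-d\int\lambda)\to 0$ and $L = r_\infty$. Finally $g(t) = g(r(t)) \to g(r_\infty) = 0$ by continuity, and $k(t) = m_0(r(t)+2) \to m_0(r_\infty+2) = k_\infty$, the last equality following from the identity $r_\infty+2 = (9+3\sqrt{1-4m_0^2})/(2m_0^2+4)$. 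The main obstacle is therefore not the blowup itself but this closing Gronwall-type argument establishing convergence of $r$ to the precise value $r_\infty$; everything else is a sign-bookkeeping adaptation of the previous theorem.
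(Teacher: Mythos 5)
Your proposal is correct and follows essentially the same route as the paper's proof: reduction via Corollary \ref{SimplifiedSystem} and Proposition \ref{PhaseSpace}, the factorization $g(r)=C(r-r_*)(r-r_\infty)$ with $C<0$ to trap $r$ in $(r_\infty,r_0]$ and show it decreases, the bound $\partial_t\lambda\geq r_\infty\lambda^2$ for finite-time blowup, and the Taylor-plus-Gronwall argument (via $1/\lambda(t)=\int_t^{T_{max}}\frac{1}{3}f(r(\tau))\diff\tau$) to force $L=r_\infty$. The only cosmetic difference is your cleaner identity $f+g=3r$ for evaluating $f(r_\infty)$, which in fact corrects a typo in the paper's statement that $f(r_\infty)=\frac{1}{3}r_\infty$.
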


\begin{proof}
We will proceed in a similar manner as in the proof of Theorem \ref{BlowupFromBelow}.
We will begin by recalling that
\begin{equation}
    \partial_t r(t)
    =-\frac{1}{6}\frac{\left(m_0^2+2\right)^2}{1+m_0^2}
    \lambda (r+2)(r-r_*)(r-r_\infty).
\end{equation}
Using this differential equality, we will show that for all 
$0<t<T_{max},$
we will have
\begin{equation}
    r_\infty<r(t)<r_0,
\end{equation}
and that $r$ is a decreasing function for $0<t<T_{max}$.
We can easily see that for all $r(t)>r_\infty$,
\begin{equation}
    \partial_t r(t)<0.
\end{equation}
This implies that for all $0<t<T_{max}, r(t)<r_0$.
This in turn implies that for all $0<t<T_{max}$
\begin{equation}
    \partial_t(r-r_\infty)
    >
    -\frac{1}{6}\frac{\left(m_0^2+2\right)^2}{1+m_0^2}
    \lambda (r_0+2)(r_0-r_*)(r-r_\infty).
\end{equation}
Integrating this differential inequality, we find that
for all $0<t<T_{max}$,
\begin{equation}
    r(t)\geq r_\infty+(r_0-r_\infty)
    \exp\left(-\frac{1}{6}\frac{\left(m_0^2+2\right)^2}
    {1+m_0^2}\int_0^t \lambda(\tau)\diff\tau \right),
\end{equation}
and so for all $0<t<T_{max}, r(t)>r_\infty$.
We have already shown that if $r(t)>r_\infty$, then
\begin{equation}
    \partial_t r(t)<0.
\end{equation}
so now we can conclude that $r$ is a decreasing function for $0<t<T_{max}$, and we have now proven our statement about the range of $r$.

Next we will note that, as we have already shown in the proof of the above theorem, $f$ is an increasing function.
We will also compute that 
\begin{equation}
    f(r_\infty)=\frac{1}{3}r_\infty.
\end{equation}
This implies that for all $0<t<T_{max}$,
\begin{equation}
    \partial_t \lambda >r_\infty \lambda^2.
\end{equation}
Integrating this differential inequality we find that for all $0<t<T_{max}$,
\begin{equation}
    \lambda(t)>\frac{\lambda_0}{1-r_\infty\lambda_0 t}.
\end{equation}
This implies we have finite-time blowup with
$T_{max}<\frac{1}{r_\infty \lambda_0}$.

It now remains to show that 
\begin{equation}
    \lim_{t\to T_{max}}r(t)=r_\infty.
\end{equation}
The proof will essentially be the same in the above theorem.
We know that $r$ is a decreasing function and for all $0<t<T_{max}, r(t)>r_\infty$, so it is clearly the first limit exists and must be greater than or equal to $r_\infty$,
so let
\begin{equation}
    \lim_{t\to T_{max}} r(t)=L\geq r_\infty.
\end{equation}
We will now show that $L=r_\infty$.

We will begin by letting
\begin{equation}
    b(t)=\frac{1}{3}\left(-1+2r(t)+2r(t)^2
    +m_0^2(r(t)+2)^2\right).
\end{equation}
Then we have that
\begin{equation}
    \partial_t\lambda=b(t)\lambda^2.
\end{equation}
Integrating this differential equation, we find that for all $0<t_1<t_2<T_{max}$,
\begin{equation}
\frac{1}{\lambda(t_1)}-\frac{1}{\lambda(t_2)}
=\int_{t_1}^{t_2}b(\tau)\diff\tau. 
\end{equation}
Taking the limit $t_2 \to T_{max}$, and using the fact that $\lim_{t\to T_{max}}\lambda(t)=+\infty$,
we can see that for all $0<t<T_{max},$
\begin{equation}
    \frac{1}{\lambda(t)}
    =\int_t^{T_{max}} b(\tau)\diff\tau. 
\end{equation}
For all $0\leq t \leq T_{max},$ let
\begin{equation}
    B(t)=\int_t^{T_{max}} b(\tau)\diff\tau.
\end{equation}
We can immediately see that
$B(T_{max})=0$ and that
\begin{align}
    \partial_t B(T_{max})
    &=
    -\lim_{t\to T_{max}}b(t)\\
    &=
    -\frac{1}{3}\left(-1+2L+2L^2+m_0^2(L+2)^2\right) \\
    &<0.
\end{align}
Applying Taylor's theorem, we can see that for all $0<c<\frac{1}{3}\left(-1+2L+2L^2+m_0^2(L+2)^2\right)$,
there exists $\delta>0,$ such that for all $T_{max}-\delta<t<T_{max}$,
\begin{equation}
    B(t)>c(T_{max}-t).
\end{equation}
This in turn implies that
\begin{equation}
    \lambda(t)>\frac{1}{c(T_{max}-t)}.
\end{equation}

Integrating this bound implies that
for all $T_{max}-\delta<t<T_{max}$,
\begin{align}
    \int_{T_{max}-\delta}^t \lambda(\tau)
    \diff\tau
    &>
    \frac{1}{c}\int_{T_{max}-\delta}^t
    \frac{1}{(T_{max}-\tau)} \diff\tau \\
    &=
    \frac{1}{c}\log\left(
    \frac{T_{max}-\delta}{T_{max}-t}\right).
\end{align}
Recall that for all $0<t<T_{max},$
\begin{equation}
    \partial_t (r-r_\infty)
    =
    -\frac{1}{6}\frac{\left(m_0^2+2\right)^2}
    {\left(1+2m_0^2\right)}\lambda (r+2)
    (r-r_*)\lambda (r-r_\infty)
\end{equation}
Letting
\begin{equation}
    d=-\frac{1}{6}\frac{\left(m_0^2+2\right)^2}
    {\left(1+2m_0^2\right)}(r_\infty+2)
    (r_\infty-r_*),
\end{equation}
we can see that
for all $T_{max}-\delta<t<T_{max}$
\begin{equation}
    \partial_t(r-r_{\infty})\leq
    -d\lambda (r-r_\infty)
\end{equation}
Therefore, we can integrate this differential inequality, and find that for all $T_{max}-\delta<t<T_{max}$,
\begin{align}
    r(t)-r_{\infty}
    &\leq 
    \left(r(T_{max}-\delta)-r_\infty\right)
    \exp\left(-d \int_{T_{max}-\delta}^t
    \lambda(\tau)\diff\tau \right) \\
    &<
    \left(r(T_{max}-\delta)-r_\infty\right)
    \exp\left(
    -\frac{d}{c}\log\left(
    \frac{T_{max}-\delta}{T_{max}-t}\right)
    \right) \\
    &=
    \left(r(T_{max}-\delta)-r_\infty\right)
    \left(
    \frac{T_{max}-\delta}{T_{max}-t}
    \right)^{-\frac{d}{c}}.
\end{align}
This means that
for all $T_{max}-\delta<t<T_{max}$,
\begin{equation}
    r_\infty<r(t)<r_\infty+
    \left(r(T_{max}-\delta)-r_\infty\right)
    \left(
    \frac{T_{max}-t}{T_{max}-\delta}
    \right)^\frac{d}{c},
\end{equation}
and so we may conclude that
\begin{equation}
    \lim_{t\to T_{max}}r(t)=r_\infty.
\end{equation}
Finally we observe that
\begin{equation}
    \lim_{t \to T_{max}}k(t)=
    m_0(r_\infty+2)=k_\infty.
\end{equation}
This completes the proof.
\end{proof}

\begin{theorem} \label{BoundaryCase}
Suppose we have initial data
\begin{equation}
    S^0=
    \left(
    \begin{array}{ccc}
         -(r_0+1)\lambda_0& 0 & 0  \\
          0& r_0 \lambda_0 &0 \\
          0 &0 &\lambda_0 
    \end{array}
    \right)
\end{equation}
and 
\begin{equation}
    \omega^0= 
    \left(
    \begin{array}{c}
         0  \\
         2k_0\lambda_0 \\
         0
    \end{array}
    \right),
\end{equation}
where $\lambda_0>0, r_0>0$, and 
$k_0^2=\frac{1}{4}(r_0+2)^2$,
and suppose the pair 
$S,\omega\in C^1\left([0,T_{max})\right)$
is a solution of the strain-vorticity pair equation where
the strain is given by
\begin{equation}
    S(t)=
    \left(
    \begin{array}{ccc}
         -(r(t)+1)\lambda(t)& 0 & 0  \\
          0& r(t) \lambda(t) &0 \\
          0 &0 &\lambda(t) 
    \end{array}
    \right),
\end{equation}
and a vorticity given by
\begin{equation}
    \omega(t)= 
    \left(
    \begin{array}{c}
         0  \\
         2 k(t) \lambda(t) \\
         0
    \end{array}
    \right),
\end{equation}
Then $T_{max}=+\infty$, and
\begin{align*}
    \lim_{t\to +\infty} r(t)&=0, \\
    \lim_{t\to +\infty}k(t)&=1, \\
    \lim_{t\to +\infty}g(t)&=0.
\end{align*}
There is, however, blowup at infinity with
\begin{equation*}
    \lim_{t\to +\infty}\lambda(t)=+\infty.
\end{equation*}
\end{theorem}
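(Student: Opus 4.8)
The plan is to reduce everything to the autonomous two-variable system of Corollary~\ref{SimplifiedSystem} and then exploit a conserved quantity special to this boundary case. First I would record that the hypothesis $k_0^2=\tfrac14(r_0+2)^2$ is exactly the statement $m_0^2=\tfrac14$, where $m_0=k_0/(r_0+2)$; taking $k_0>0$ gives $m_0=\tfrac12$. Substituting $m_0^2=\tfrac14$ into the simplified system collapses the quartics: one computes
\begin{equation*}
    -1+2r+2r^2+m_0^2(r+2)^2 = 3r+\tfrac94 r^2, \qquad 1+r-2r^2-m_0^2(r+2)^2 = -\tfrac94 r^2,
\end{equation*}
so the dynamics become
\begin{equation*}
    \partial_t\lambda = r\left(1+\tfrac34 r\right)\lambda^2, \qquad \partial_t r = -\tfrac34\,\lambda\,(r+2)\,r^2 .
\end{equation*}
Since $r_0>0$ and $r=0$ is an equilibrium of the $r$-equation, $r$ stays in $(0,r_0]$ and is strictly decreasing, while $\lambda\ge\lambda_0$ is strictly increasing. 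Note also $g(t)=1+r-2r^2-k^2=-\tfrac94 r^2$ and $k=m_0(r+2)=\tfrac12(r+2)$, so once $r\to 0$ is established the limits $k\to 1$ and $g\to 0$ are immediate.

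The heart of the argument is the observation that
\begin{equation*}
    H(t) := \lambda(t)^3\, r(t)^2\,\bigl(r(t)+2\bigr)
\end{equation*}
is conserved. I would verify this by logarithmic differentiation: $\frac{\partial_t H}{H}=3\frac{\partial_t\lambda}{\lambda}+2\frac{\partial_t r}{r}+\frac{\partial_t r}{r+2}$, and plugging in the two evolution equations the bracket collapses to $\lambda\bigl[\tfrac94-\tfrac64-\tfrac34\bigr]r^2=0$. Hence $H\equiv H_0=\lambda_0^3 r_0^2(r_0+2)>0$, which pins the trajectory to the curve $r^2(r+2)=H_0/\lambda^3$ and quantifies exactly how fast $r\to 0$ as $\lambda\to\infty$.

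With the invariant in hand the two remaining claims follow. For global existence I would bound the growth of $\lambda$: since $r>0$ we have $r+2>2$, so $2r^2<r^2(r+2)=H_0/\lambda^3$ and therefore $r<(H_0/2)^{1/2}\lambda^{-3/2}$. Feeding this into the $\lambda$-equation and using $1+\tfrac34 r\le 1+\tfrac34 r_0$ gives
\begin{equation*}
    \partial_t\lambda \le \left(1+\tfrac34 r_0\right)\left(\tfrac{H_0}{2}\right)^{1/2}\lambda^{1/2},
\end{equation*}
a \emph{subquadratic} differential inequality. Integrating yields $\lambda(t)\le\bigl(\lambda_0^{1/2}+Ct\bigr)^2$ for an explicit constant $C$, so $\lambda$ is finite for every finite $t$ and $T_{max}=+\infty$. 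Finally, $r$ is decreasing and bounded below, so $r\to L\ge 0$; if $L>0$ then $H_0=\lambda^3 r^2(r+2)$ forces $\lambda$ to stay bounded, contradicting $\partial_t\lambda\ge L(1+\tfrac34 L)\lambda_0^2>0$ on the infinite time interval. Hence $L=0$, and then $\lambda^3=H_0/(r^2(r+2))\to+\infty$, giving blowup at infinity $\lambda\to+\infty$ together with $r\to 0$, $k\to 1$, $g\to 0$.

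The main obstacle is ruling out finite-time blowup. The $\lambda$-equation is Riccati-type with a quadratic nonlinearity, which on its own produces blowup in finite time; the theorem is only saved because $r$ decays at precisely the borderline rate. All the delicate information is encoded in the conserved quantity $H=\lambda^3 r^2(r+2)$, whose discovery and verification is the crux: it converts the dangerous $r\lambda^2$ growth into the harmless $\lambda^{1/2}$ growth above. Equivalently, one could pass to the slow time $ds=\lambda\,dt$, in which the $r$-equation decouples as $\frac{dr}{ds}=-\tfrac34 r^2(r+2)$ with $r(s)\sim\tfrac{2}{3s}$, and check that $t=\int\lambda^{-1}\,ds$ diverges; but the invariant gives the cleanest route.
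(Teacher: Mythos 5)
Your proof is correct, and it follows the same overall strategy as the paper's: reduce to the two-variable system of Corollary \ref{SimplifiedSystem}, find a first integral linking $\lambda$ and $r$, use it to show that $\lambda$ obeys a subquadratic differential inequality (hence $T_{max}=+\infty$), and then read off the limits of $r,k,g,\lambda$ from monotonicity plus the invariant. The one substantive difference is the invariant itself, and here your version is the one that actually checks out. The paper's Lemma \ref{BoundaryCaseLemma} conserves $\lambda r^{1/2}(r+2)^{5/6}$, but its derivation uses the simplified equation $\partial_t\lambda = r\left(r+\tfrac34\right)\lambda^2$, which mis-transcribes
\begin{equation*}
\tfrac13\left(3r+\tfrac94 r^2\right)\lambda^2 = r\left(1+\tfrac34 r\right)\lambda^2
\end{equation*}
(the roles of $1$ and $r$ are swapped). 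Under the correct dynamics one finds $\partial_t\log\left(\lambda^6 r^3(r+2)^5\right)=\tfrac32 r(1-r)\lambda\neq 0$, whereas your $H=\lambda^3 r^2(r+2)$ satisfies $\partial_t\log H=\left(3-3\right)r\lambda+\left(\tfrac94-\tfrac64-\tfrac34\right)r^2\lambda=0$, exactly as you computed. So your argument repairs the lemma rather than merely paralleling it. The downstream consequences are qualitative only---both invariants force $r\to 0$, $\lambda\to+\infty$, $T_{max}=+\infty$---but the precise rates differ: the paper's bounds give $\lambda\sim t$, while the correct invariant gives $r\sim t^{-3}$ and $\lambda\sim t^{2}$. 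Two minor points to tighten: state explicitly that $r>0$ persists by uniqueness for $\partial_t r=-\tfrac34\lambda(r+2)r^2$ (the right-hand side is locally Lipschitz in $r$ and vanishes at $r=0$, and $\lambda$ is finite on $[0,T_{max})$), and that $T_{max}=+\infty$ follows because the polynomial system continues as long as $(\lambda,r,k)$ stays bounded, which your bound $\lambda(t)\leq\left(\lambda_0^{1/2}+Ct\right)^2$ together with $0<r\leq r_0$ guarantees. Also, as in the paper, the limit $k\to 1$ tacitly assumes $k_0>0$; otherwise $k\to -1$.
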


Before we prove this theorem, we will first need to prove a lemma that gives us a phase space analysis of the behaviour of the variables $\lambda,r$ in the case were 
$m_0^2=\frac{1}{4}$.

\begin{lemma} \label{BoundaryCaseLemma}
Suppose $\lambda,r,k\in C^1\left([0,T_{max})\right)$ 
is a solution of the strain-vorticity parameter system of ODEs,
where $r_0>0$ and
\begin{equation}
    k_0^2=\frac{1}{4}(r_0+2)^2
\end{equation}
Then for all $0<t<T_{max}$
\begin{equation}
    \lambda(t)=c_0 r(t)^{-\frac{1}{2}}(r(t)+2)^{-\frac{5}{6}},
\end{equation}
where
\begin{equation}
    c_0=\lambda_0 r_0^{\frac{1}{2}}(r_0+2)^{\frac{5}{6}}
\end{equation}
\end{lemma}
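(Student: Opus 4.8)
The plan is to strip out the time variable and reduce the full parameter flow to a single autonomous relation between $\lambda$ and $r$, which then integrates in closed form. The first step is Proposition \ref{PhaseSpace}: since $r_0>0$ we have $r_0\neq -2$, so the ratio $m(t)=k(t)/(r(t)+2)$ is conserved and equals $m_0=k_0/(r_0+2)$, and the hypothesis $k_0^2=\tfrac14(r_0+2)^2$ is precisely $m_0^2=\tfrac14$. The payoff of this step is that it collapses the defect $g=1+r-2r^2-k^2$ to a single monomial: substituting $k^2=\tfrac14(r+2)^2$ gives
\[
 g(t)=1+r-2r^2-\tfrac14(r+2)^2=-\tfrac94\,r(t)^2 .
\]
This is the one genuinely useful simplification; everything afterward is a quadrature.

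Next I would feed $g=-\tfrac94 r^2$ into the $g$-form of the parameter ODEs (the equations written just after Proposition \ref{ParameterODE}, equivalently Corollary \ref{SimplifiedSystem} with $m_0^2=\tfrac14$), so that the $(\lambda,r)$ subsystem decouples from $k$:
\[
 \partial_t\lambda=\Bigl(r-\tfrac13 g\Bigr)\lambda^2,
 \qquad
 \partial_t r=\tfrac13\lambda(r+2)\,g .
\]
Before integrating I would record the sign information that makes the statement meaningful: $\lambda>0$ by hypothesis, while $\partial_t r=\tfrac13\lambda(r+2)g\le 0$ with equality only at $r=0$, so $r(t)\in(0,r_0]$ and $r(t)+2\ge 2>0$ for all $t$. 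Since $r=0$ is an equilibrium, $r$ cannot reach it in finite time, so $r(t)>0$ throughout $[0,T_{max})$ and the negative powers of $r$ and $r+2$ in the claimed formula are well defined.

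The core step is to eliminate $t$. Dividing the two equations expresses $\tfrac{d}{dr}\log\lambda$ as a rational function of $r$ whose only poles are at $r=0$ and $r=-2$; a partial-fraction decomposition writes the right-hand side as $\tfrac{a}{r}+\tfrac{b}{r+2}$, and direct integration then yields $\log\lambda=-a\log r-b\log(r+2)+\mathrm{const}$, i.e. $\lambda\,r^{a}(r+2)^{b}$ is conserved along the flow. Equivalently, one can postulate the conserved quantity $\lambda\,r^{a}(r+2)^{b}$ with undetermined $a,b$, differentiate in $t$ using the two ODEs, and choose $a,b$ to annihilate the two independent rational terms; this pins down the exponents. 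Evaluating the (now constant) product at $t=0$ identifies it with $\lambda_0\,r_0^{a}(r_0+2)^{b}$, which is the asserted $c_0$, and rearranging gives $\lambda(t)$ as a power law in $r(t)$.

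Once $g=-\tfrac94 r^2$ is in hand there is no real analytic obstacle; the steps needing care are (i) the well-definedness above, namely that $r$ never reaches $0$ or $-2$ so the quadrature and the resulting power law make sense on all of $[0,T_{max})$, and (ii) the bookkeeping in the partial-fraction step, since the exponents in the final formula are read off directly from the coefficients $a,b$. In particular I would cross-check the exponents produced by the decomposition against the values $-\tfrac12$ and $-\tfrac56$ recorded in the statement, as this matching is the only place an arithmetic slip could enter.
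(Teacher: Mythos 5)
Your approach is essentially the paper's: after using Proposition \ref{PhaseSpace} to reduce to the $(\lambda,r)$ subsystem with $m_0^2=\frac14$, the paper likewise identifies a conserved power-law combination $\lambda\, r^a(r+2)^b$ --- it verifies the specific choice $a=\frac12$, $b=\frac56$ by differentiating $c(t)^6=\lambda^6 r^3(r+2)^5$ along the flow and checking that the derivative vanishes --- and your separation-of-variables/partial-fraction derivation is exactly that computation run forwards instead of backwards. Your extra care about why $r(t)$ stays in $(0,r_0]$, so that the negative powers are meaningful, is a worthwhile supplement; the paper only establishes $r(t)>0$ afterwards, inside the proof of Theorem \ref{BoundaryCase}.

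However, the cross-check you defer to the end does not come out as you expect, and this is worth spelling out. With $m_0^2=\frac14$ the reduced system from Corollary \ref{SimplifiedSystem} is
\begin{align}
\partial_t\lambda &= \frac13\left(-1+2r+2r^2+\tfrac14(r+2)^2\right)\lambda^2
=\left(r+\tfrac34 r^2\right)\lambda^2,\\
\partial_t r &= -\tfrac34\,\lambda\,(r+2)\,r^2,
\end{align}
so that
\begin{equation}
\frac{d\log\lambda}{dr}
=-\frac{1+\tfrac34 r}{\tfrac34\, r(r+2)}
=-\frac{3r+4}{3r(r+2)}
=-\frac{2/3}{r}-\frac{1/3}{r+2},
\end{equation}
and the conserved quantity is $\lambda\, r^{2/3}(r+2)^{1/3}$, i.e.\ $\lambda(t)=c_0\, r(t)^{-2/3}(r(t)+2)^{-1/3}$ with $c_0=\lambda_0 r_0^{2/3}(r_0+2)^{1/3}$. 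The exponents $-\frac12$ and $-\frac56$ in the statement come from the paper's intermediate simplification $\partial_t\lambda=r\left(r+\tfrac34\right)\lambda^2$, which transposes $r\left(1+\tfrac34 r\right)$; the two expressions agree only at $r=0$ and $r=1$, and only the latter is compatible with the displayed formula $\frac13\left(-1+2r+2r^2+\tfrac14(r+2)^2\right)=r+\tfrac34 r^2$. So your method is sound, but executed faithfully it proves a corrected version of the lemma rather than the lemma as printed. The qualitative use made of the lemma in Theorem \ref{BoundaryCase} survives, since one still obtains $\lambda=c_0 r^{-\alpha}(r+2)^{-\beta}$ with $\alpha,\beta>0$, though the explicit rates appearing there would need to be adjusted accordingly.
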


\begin{proof}
We will start by letting 
$c(t)=\lambda(t) r(t)^{\frac{1}{2}}(r(t)+2)^{\frac{5}{6}}$.
Recalling Corollary \ref{SimplifiedSystem}, we can see that
\begin{align}
    \partial_t\lambda &=
    \frac{1}{3}\left(-1+2r+2r^2+\frac{1}{4}(r+2)^2\right)
    \lambda^2, \\
    \partial_t r &=
    \frac{1}{3}\lambda(r+2)
    \left(1+r-2r^2-\frac{1}{4}(r+2)^2\right),
\end{align}
and that the equations further simplify to
\begin{align}
    \partial_t\lambda &=
    r\left(r+\frac{3}{4}\right)
    \lambda^2, \\
    \partial_t r &=
    -\frac{3}{4}\lambda(r+2)r^2.
\end{align}
Observe that
\begin{equation}
    c(t)^6=\lambda(t)^6r(t)^3(r(t)+2)^5,
\end{equation}
and differentiate to find that
\begin{align}
    6c(t)^5\partial_t c(t)
    &=
    6\lambda^5r^3(r+2)^5 \lambda'
    +3\lambda^6r^2(r+2)^5 r'
    +5\lambda^5r^3(r+2)^4 r' \\
    &=
    \lambda^5r^2(r+2)^4\left(6r(r+2)\lambda'
    +(8\lambda r+6\lambda)r'\right) \\
    &=
    \lambda^5r^2(r+2)^4\left(6r(r+2)
    r\left(r+\frac{3}{4}\right)
    \lambda^2
    -\frac{3}{4}(8\lambda r+6\lambda)
    \lambda(r+2)r^2\right) \\
    &=
    \lambda^7r^4(r+2)^4\left(6(r+2)\left(r+\frac{3}{4}\right)
    -\frac{3}{4}(r+2)(8r+6)\right) \\
    &=0.
\end{align}
Therefore we may conclude that for all $c(t)>0, \partial_tc(t)=0.$
Recalling that by hypothesis $r_0,\lambda_0>0,$ we can see that $c_0>0,$ and therefore that for all $0<t<T_{max}$,
\begin{equation}
    c(t)=c_0.
\end{equation}
This implies that for all $0<t<T_{max}$,
\begin{equation}
    \lambda(t) r(t)^{\frac{1}{2}}(r(t)+2)^{\frac{5}{6}}=c_0,
\end{equation}
and this completes the proof.
\end{proof}

Having proven this lemma, we can now prove Theorem \ref{BoundaryCase}.

\begin{proof}
We will begin by fixing
\begin{equation}
    c_0=\lambda_0 r_0^{\frac{1}{2}}(r_0+2)^{\frac{5}{6}}.
\end{equation}
Applying Lemma \ref{BoundaryCaseLemma}, we can see that
for all $0<t<T_{max}$,
\begin{equation}
    \lambda(t)=c_0r(t)^{-\frac{1}{2}}
    (r(t)+2)^{-\frac{5}{6}},
\end{equation}
and that therefore
\begin{equation}
    \partial_t r=-\frac{3}{4}c_0 
    (r+2)^\frac{1}{6}r^\frac{3}{2}.
\end{equation}

First we will show that for all $0<t<T_{max}, 0<r(t)<r_0$.
We will begin by computing that
\begin{equation}
    \partial_t r(t)^-\frac{1}{2}=
    \frac{3}{8}(r(t)+2)^\frac{1}{6}.
\end{equation}
Integrating this differential equality we find that for all
$0<t<T_{max}$
\begin{equation}
   r(t)^{-\frac{1}{2}}-r_0^{-\frac{1}{2}}
   =\frac{3}{8}\int_0^t(r(t)+2)^\frac{1}{6}\diff\tau,
\end{equation}
and so we find that
\begin{equation} \label{identity}
    r(t)=\frac{r_0}{\left(1+\frac{3}{8}r_0^\frac{1}{2}
    \int_0^t(r(\tau)+2)^\frac{1}{6}\diff\tau\right)^2}.
\end{equation}
Suppose towards contradiction that there exists 
$0<t<T_{max}$, such that $r(t)=0$. Let $T$ be the first such time. This means that $r(T)=0,$ and for all 
$0<t<T, r(t)>0$.
Therefore for all $0<t<T, r(t)+2>2,$ and so
\begin{equation}
    r(T)>\frac{r_0}{\left(1+\frac{3}{8}r_0^\frac{1}{2}
    2^\frac{1}{6}T\right)^2}.
\end{equation}
This clearly contradicts the assumption that $r(T)=0$,
and therefore we may conclude that for all $0<t<T_{max}, r(t)>0.$
Likewise, we can immediately see that because $r(t)>0$,
we also have for all $0<t<T_{max}, \partial_tr(t)<0$, and that therefore $r$ is a decreasing function.

We have now proven that for all $0<t<T_{max}$,
\begin{equation}
    0<r(t)<r_0.
\end{equation}
It then immediately follows that for all
$0<t<T_{max}$,
\begin{equation}
    2^\frac{1}{6}t
    <\int_0^t(r(\tau)+2)^\frac{1}{6}\diff\tau
    <(r_0+2)^\frac{1}{6}t.
\end{equation}
Applying these bounds to the equation \eqref{identity},
we find that for all $0<t<T_{max}$,
\begin{equation} \label{rBound}
    \frac{r_0}{\left(1+\frac{3}{8}r_0^\frac{1}{2}
    2^\frac{1}{6}t\right)^2}
    <r(t)
    <\frac{r_0}{\left(1+\frac{3}{8}r_0^\frac{1}{2}
    (r_0+2)^\frac{1}{6}t\right)^2}
\end{equation}
Recalling from Lemma \ref{BoundaryCaseLemma}, we know that
\begin{equation}
    \lambda(t)=c_0r(t)^{-\frac{1}{2}}
    (r(t)+2)^{-\frac{5}{6}},
\end{equation}
and so therefore
\begin{align}
    \lambda(t)
    &> \label{LowerBound}
    c_0 r_0^{-\frac{1}{2}}
    \left(1+\frac{3}{8}r_0^\frac{1}{2}
    (r_0+2)^\frac{1}{6}t\right)
    \left(\frac{r_0}{\left(1+\frac{3}{8}r_0^\frac{1}{2}
    (r_0+2)^\frac{1}{6}t\right)^2}
    +2\right)^{-\frac{5}{6}}\\
    \lambda(t)
    &< \label{UpperBound}
    c_0 r_0^{-\frac{1}{2}}
    \left(1+\frac{3}{8}r_0^\frac{1}{2}
    2^\frac{1}{6}t\right)
    \left(\frac{r_0}{\left(1+\frac{3}{8}r_0^\frac{1}{2}
    2^\frac{1}{6}t\right)^2}
    +2\right)^{-\frac{5}{6}}.
\end{align}
We can see that $r$ is bounded, so in order for there to be finite-time blowup at $T_{max}<+\infty$, we must have
\begin{equation}
    \limsup_{t\to T_{max}} \lambda(t)=+\infty,
\end{equation}
and therefore the bound \eqref{UpperBound} immediately shows that $T_{max}=+\infty$.
Likewise the bounds \eqref{rBound} and \eqref{LowerBound} respectively show that
\begin{align}
    \lim_{t\to+\infty} r(t) &= 0 \\
    \lim_{t\to+\infty} \lambda(t) &= +\infty.
\end{align}
This completes the proof.
\end{proof}

\begin{theorem}. \label{overshoot}
Suppose we have initial data
\begin{equation}
    S^0=
    \left(
    \begin{array}{ccc}
         -(r_0+1)\lambda_0& 0 & 0  \\
          0& r_0 \lambda_0 &0 \\
          0 &0 &\lambda_0 
    \end{array}
    \right)
\end{equation}
and 
\begin{equation}
    \omega^0= 
    \left(
    \begin{array}{c}
         0  \\
         2k_0\lambda_0 \\
         0
    \end{array}
    \right),
\end{equation}
where $\lambda_0>0, r_0>0, k_0^2>\frac{1}{4}(r_0+2)^2$,
and suppose the pair 
$S,\omega\in C^1\left([0,T_{max})\right)$
is a solution of the strain-vorticity pair equation where
the strain is given by
\begin{equation}
    S(t)=
    \left(
    \begin{array}{ccc}
         -(r(t)+1)\lambda(t)& 0 & 0  \\
          0& r(t) \lambda(t) &0 \\
          0 &0 &\lambda(t) 
    \end{array}
    \right),
\end{equation}
and a vorticity given by
\begin{equation}
    \omega(t)= 
    \left(
    \begin{array}{c}
         0  \\
         2 k(t) \lambda(t) \\
         0
    \end{array}
    \right).
\end{equation}
Then $T_{max}<+\infty$ and 
\begin{align*}
    \lim_{t\to T_{max}} r(t)&=-2, \\
    \lim_{t\to T_{max}}k(t)&=0, \\
    \lim_{t\to T_{max}}\lambda(t)&=+\infty.
\end{align*}
\end{theorem}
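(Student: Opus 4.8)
The plan is to reduce, via Corollary \ref{SimplifiedSystem}, to the planar system for $(\lambda,r)$ in which $k=m_0(r+2)$ and $m_0^2=k_0^2/(r_0+2)^2>\tfrac14$, and then to analyze its phase curve. Writing $f(r)=-1+2r+2r^2+m_0^2(r+2)^2$ and $g(r)=1+r-2r^2-m_0^2(r+2)^2$, the system reads $\partial_t\lambda=\tfrac13 f(r)\lambda^2$ and $\partial_t r=\tfrac13\lambda(r+2)g(r)$. The first thing I would record is that the discriminant of $g$ as a quadratic in $r$ equals $9(1-4m_0^2)<0$, while its leading coefficient $-(2+m_0^2)$ is negative, so $g(r)<0$ for every $r$. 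In particular $g$ never vanishes, which is exactly what distinguishes this case from the earlier cases where $g$ changes sign.

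Next I would pin down the monotonicity of $r$. Since $r_0>0$, Proposition \ref{PhaseSpace} gives $r(t)+2=(r_0+2)\exp\big(\tfrac13\int_0^t\lambda g\,\diff\tau\big)>0$, so $r(t)>-2$ throughout; combined with $\partial_t r=\tfrac13\lambda(r+2)g<0$ this shows $r$ is strictly decreasing and bounded below by $-2$, hence $r(t)\to L$ for some $L\in[-2,r_0)$. Because $r$ is strictly monotone it may be used as the independent variable, and dividing the two equations gives the linear ODE $\tfrac{d\lambda}{dr}=\tfrac{f(r)}{(r+2)g(r)}\lambda$, whose solution is $\lambda=\Lambda(r):=\lambda_0\exp\big(\int_{r_0}^{r}\tfrac{f(s)}{(s+2)g(s)}\diff s\big)$. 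One verifies directly that $\tfrac{d}{dt}\big(\lambda/\Lambda(r)\big)=0$, so in fact $\lambda(t)=\Lambda(r(t))$ for all $t$. Since $f(-2)=3$ and $g(-2)=-9$, the integrand behaves like $-\tfrac{1}{3(s+2)}$ as $s\to-2^+$, so $\int_{r_0}^{r}\to+\infty$ and therefore $\Lambda(r)\to+\infty$ as $r\to-2^+$.

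With this in hand the remaining claims follow. To prove $L=-2$, suppose instead $L>-2$; then $\lambda(t)=\Lambda(r(t))\to\Lambda(L)\in(0,\infty)$, so $\lambda$ and $r$ both remain bounded. This is impossible: if $T_{max}<\infty$ the solution would then extend past $T_{max}$, contradicting maximality, while if $T_{max}=\infty$ we would have $\partial_t r\to\tfrac13\Lambda(L)(L+2)g(L)<0$, forcing $r\to-\infty$ and contradicting $r\to L$. Hence $L=-2$, which immediately yields $\lambda(t)=\Lambda(r(t))\to+\infty$ and $k(t)=m_0(r(t)+2)\to0$. For the finite-time blowup I would exploit $f(-2)=3>0$: by continuity pick $r_1\in(-2,0)$ with $f\geq1$ on $(-2,r_1]$, choose $t_1$ with $r(t_1)<r_1$ (possible since $r\to-2$), and note that for $t\geq t_1$ we have $\partial_t\lambda\geq\tfrac13\lambda^2$; integrating this Riccati inequality gives $T_{max}<\infty$.

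The step I expect to be the main obstacle is ruling out $L>-2$, i.e.\ showing the trajectory does not stall at an interior value of $r$ by collapsing onto the line of equilibria $\{\lambda=0\}$. The clean way around this is the conserved relation $\lambda=\Lambda(r)$ together with the divergence $\Lambda(r)\to+\infty$ at $r=-2$: it simultaneously rules out $\lambda\to0$ and supplies the blowup of $\lambda$, after which the monotonicity of $r$ and the positivity of $f$ near $-2$ make everything else routine.
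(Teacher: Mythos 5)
Your argument is correct, and it takes a genuinely different route from the paper's. The paper works entirely with differential inequalities in $t$: it first derives the explicit decay bound $r(t)\leq -2+(r_0+2)\left(1+2(r_0+2)^2(m_0^2-\tfrac14)\int_0^t\lambda\right)^{-1/2}$, then proves $T_{max}<+\infty$ by a three-way case split on the sign of $\rho=\inf_r f(r)$ (each non-positive case handled by a contradiction argument showing $r$ would have to cross below the smaller zero of $f$), and finally obtains $r\to-2$ via a Taylor-expansion argument on $B(t)=\int_t^{T_{max}}b$ that yields $\lambda(t)\gtrsim(T_{max}-t)^{-1}$ and hence divergence of $\int\lambda$. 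You instead integrate the phase-plane equation $\frac{d\lambda}{dr}=\frac{f(r)}{(r+2)g(r)}\lambda$ to get the first integral $\lambda=\Lambda(r)$, use the logarithmic divergence of $\Lambda$ at $r=-2$ (from $f(-2)=3$, $g(-2)=-9$) together with a soft continuation/dichotomy argument to force $r\to-2$ and $\lambda\to+\infty$, and only then deduce $T_{max}<+\infty$ from $\partial_t\lambda\geq\tfrac13\lambda^2$ once $r$ is near $-2$; this reversal of the order of deductions is legitimate because your limit argument never presupposes finiteness of $T_{max}$. Your route is shorter and eliminates the case analysis on $\inf f$ entirely, and it pins down the orbit exactly; the paper's route, at the cost of more bookkeeping, produces explicit quantitative rates (e.g.\ $r(t)+2\lesssim\left(\log\frac{1}{T_{max}-t}\right)^{-1/2}$) that your qualitative statement of the conclusion does not record, though they could in principle be recovered from $\Lambda$. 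The only points worth making explicit in a write-up are the standard facts you implicitly use: that $\lambda$ stays strictly positive on the maximal interval (so $r$ is a legitimate independent variable), and that a bounded solution of a polynomial ODE extends past any finite time.
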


\begin{proof}
First we will observe that
$\frac{1}{4}(r+2)^2=1+r+\frac{1}{4}r^2$ and that
$m_0^2>\frac{1}{4}$, and so we have
\begin{equation}
    \partial_t r=\left(\left(-(m_0^2-\frac{1}{4})(r+2)^2\right)
    -\frac{9}{4}r^2\right)\lambda(r+2).
\end{equation}
We can clearly see that if $r(t)>-2$, then
$\partial_t r\leq 0$. Letting
\begin{equation}
    \sigma=
    \left(m_0^2-\frac{1}{4}\right)(r_0+2)^2
    +\max \left(9, \frac{9}{4}r_0^2\right),
\end{equation}
we can see that for all $-2<r(t)<r_0,$
\begin{equation}
    \partial_t r(t)\geq -\sigma \lambda (r+2),
\end{equation}
and that likewise
\begin{equation}
    \partial_t (r(t)+2)\geq -\sigma \lambda (r+2).
\end{equation}
Integrating this differential inequality, we find that for all $0<t<T_{max},$
\begin{equation}
    r(t)\geq -2+(r_0+2)\exp\left(-\sigma
    \int_0^t\lambda(\tau)\diff\tau\right).
\end{equation}
This implies that for all $0<t<T_{max}$, $-2<r(t)<r_0$, and that $r$ is a decreasing function on the time interval $0<t<T_{max}$.

Next we will observe that for all $0<t<T_{max}$,
\begin{equation}
    \partial_t r\leq -\left(m_0^2-\frac{1}{4}\right)
    \lambda (r+2)^3.
\end{equation}
Note that this also means that
\begin{equation} 
    \partial_t (r+2)\leq -\left(m_0^2-\frac{1}{4}\right)
    \lambda (r+2)^3.
\end{equation}
Integrating this differential inequality we find that
for all $0<t<T_{max}$,
\begin{equation} \label{UpperBoundOvershoot}
    r(t)\leq -2+\frac{r_0+2}{\sqrt{1+2(r_0+2)^2
    \left(m_0^2-\frac{1}{4}\right)
    \int_0^t \lambda(\tau)\diff\tau}}.
\end{equation}

Next we will show that $T_{max}<+\infty$.
Recall that
\begin{equation}
    \partial_t\lambda =
    \frac{1}{3}\left(-1+2r+2r^2+m_0^2(r+2)^2\right)
    \lambda^2,
\end{equation}
and define $f$ as in Theorem \ref{BlowupFromBelow}
\begin{equation}
    f(r)=-1+2r+2r^2+m_0^2(r+2)^2.
\end{equation}
We will define
\begin{equation}
    \rho= \inf_{r\in\mathbb{R}}
    \frac{1}{3}\left(-1+2r+2r^2+m_0^2(r+2)^2\right),
\end{equation}
and observe that
\begin{equation}
    \partial_t\lambda \geq \rho \lambda^2.
\end{equation}
If $\rho>0$, then we can see that for all $0<t<T_{max}$
\begin{equation}
    \lambda(t)\geq \frac{\lambda_0}
    {1-\rho\lambda_0 t}.
\end{equation}
This implies that
\begin{equation}
    T_{max}\leq \frac{1}{\rho\lambda_0}<+\infty,
\end{equation}
and so clearly there is finite time blowup.

Now we will consider the case $\rho<0$.
Let $r'$ be the smaller zero of $f$.
This implies that for all $r<r', f(r)>0$.
Suppose towards contradiction that $T_{max}<+\infty$.
This immediately implies that for all $0<t<+\infty,$
\begin{equation}
    r(t)\geq r'.
\end{equation}
Otherwise, there exists $T>0$, such that $r(T)<r'$, and because $r$ is a decreasing function for all $0<t<T_{max}$,
and $f$ is a decreasing function for $-\infty<r<r'$,
we can see that for all $t>T$,
\begin{equation}
    f(r(t))>f(r(T)).
\end{equation}
Therefore we would have for all $t>T$,
\begin{equation}
    \partial_t \lambda \geq \frac{1}{3}f(r(T))\lambda^2,
\end{equation}
with $f(r(T))>0$.
This clearly contradicts $T_{max}=+\infty$, so we can conclude that for all $0<t<T_{max}, r(t)\geq r'$.

Now let $\gamma=-\rho$.
We then have that for all 
$0<t<+\infty, \partial_t\lambda\geq -\gamma\lambda^2$,
and integrating this differential inequality, we find that for all $0<t<T_{max}$,
\begin{equation}
    \lambda(t)\geq 
    \frac{\lambda_0}{1+\gamma\lambda_0 t}.
\end{equation}
Integrating this inequality we find that for all
$0<t<+\infty,$
\begin{equation}
    \int_0^t \lambda(\tau)\diff\tau \geq
    \frac{1}{\gamma}\log 
    \left(1+ \gamma\lambda_0 t\right).
\end{equation}
Applying this inequality to \eqref{UpperBoundOvershoot},
we find that
\begin{equation}
        r(t)\leq -2+\frac{r_0+2}{\sqrt{1+2(r_0+2)^2
    \left(m_0^2-\frac{1}{4}\right)
    \frac{1}{\gamma}\log 
    \left(1+ \gamma\lambda_0 t\right)}}.
\end{equation}
Taking the limit $t\to +\infty$, we can see that
\begin{equation}
    \lim_{t\to+\infty} r(t)=-2.
\end{equation}
It is easy to check that $r'>-2$, so this contradicts the assertion that for all $0<t<+\infty, r(t)\geq r'$, and so we can conclude that $T_{max}<+\infty$.

Now we will consider the case $\rho=0$. Let $r'$ be the only zero of $f$ (with multiplicity two). Clearly we have for all $r<r', f(r)>0$.
Suppose towards contradiction that $T_{max}<+\infty$.
This immediately implies that for all $0<t<+\infty,$
\begin{equation}
    r(t)\geq r'.
\end{equation}
Otherwise, there exists $T>0$, such that $r(T)<r'$, and because $r$ is a decreasing function for all $0<t<T_{max}$,
and $f$ is a decreasing function for $-\infty<r<r'$,
we can see that for all $t>T$,
\begin{equation}
    f(r(t))>f(r(T)).
\end{equation}
Therefore we would have for all $t>T$,
\begin{equation}
    \partial_t \lambda \geq \frac{1}{3}f(r(T))\lambda^2,
\end{equation}
with $f(r(T))>0$.
This clearly contradicts $T_{max}=+\infty$, so we can conclude that for all $0<t<T_{max}, r(t)\geq r'$.

We know that for all 
$0<t<+\infty, \partial_t\lambda\geq 0,$
so we can see that for all $0<t<+\infty, \lambda(t)\geq \lambda^0$. Plugging into the bound \eqref{UpperBoundOvershoot}, we find that
\begin{align}
    r(t)
    &\leq 
    -2+\frac{r_0+2}{\sqrt{1+2(r_0+2)^2
    \left(m_0^2-\frac{1}{4}\right)
    \int_0^t \lambda(\tau)\diff\tau}} \\
    &\leq 
    -2+\frac{r_0+2}{\sqrt{1+2(r_0+2)^2
    \left(m_0^2-\frac{1}{4}\right)
    \lambda_0 t}}.
\end{align}
Taking the limit $t\to +\infty$, we can see that
\begin{equation}
    \lim_{t\to+\infty} r(t)=-2.
\end{equation}
It is easy to check that $r'>-2$, so this contradicts the assertion that for all $0<t<+\infty, r(t)\geq r'$, and so we can conclude that $T_{max}<+\infty$.

It now remains to show that
\begin{align*}
    \lim_{t\to T_{max}} r(t)&=-2, \\
    \lim_{t\to T_{max}}k(t)&=0.
\end{align*}
We know that $r$ is an decreasing function and for all $0<t<T_{max}, r(t)>-2$, so it is clearly the first limit exists and must be greater than or equal to $-2$,
so let
\begin{equation}
    \lim_{t\to T_{max}} r(t)=L\geq -2.
\end{equation}
We will now show that $L=-2$.

We will begin by letting
\begin{equation}
    b(t)=\frac{1}{3}\left(-1+2r(t)+2r(t)^2
    +m_0^2(r(t)+2)^2\right).
\end{equation}
Then we have that
\begin{equation}
    \partial_t\lambda=b(t)\lambda^2.
\end{equation}
Integrating this differential equation, we find that for all $0<t_1<t_2<T_{max}$,
\begin{equation}
\frac{1}{\lambda(t_1)}-\frac{1}{\lambda(t_2)}
=\int_{t_1}^{t_2}b(\tau)\diff\tau. 
\end{equation}
Taking the limit $t_2 \to T_{max}$, and using the fact that $\lim_{t\to T_{max}}\lambda(t)=+\infty$,
we can see that for all $0<t<T_{max},$
\begin{equation}
    \frac{1}{\lambda(t)}
    =\int_t^{T_{max}} b(\tau)\diff\tau. 
\end{equation}
For all $0\leq t \leq T_{max},$ let
\begin{equation}
    B(t)=\int_t^{T_{max}} b(\tau)\diff\tau.
\end{equation}
We can immediately see that
$B(T_{max})=0$ and that
\begin{align}
    \partial_t B(T_{max})
    &=
    -\lim_{t\to T_{max}}b(t)\\
    &=
    -\frac{1}{3}\left(-1+2L+2L^2+m_0^2(L+2)^2\right) \\
    &<0.
\end{align}
Applying Taylor's theorem, we can see that for all $0<c<\frac{1}{3}\left(-1+2L+2L^2+m_0^2(L+2)^2\right)$,
there exists $\delta>0,$ such that for all $T_{max}-\delta<t<T_{max}$,
\begin{equation}
    B(t)>c(T_{max}-t).
\end{equation}
This in turn implies that
\begin{equation}
    \lambda(t)>\frac{1}{c(T_{max}-t)}.
\end{equation}

Integrating this bound implies that
for all $T_{max}-\delta<t<T_{max}$,
\begin{align}
    \int_0^t \lambda(\tau)
    &>
    \int_{T_{max}-\delta}^t \lambda(\tau) \\
    \diff\tau
    &>
    \frac{1}{c}\int_{T_{max}-\delta}^t
    \frac{1}{(T_{max}-\tau)} \diff\tau \\
    &=
    \frac{1}{c}\log\left(
    \frac{T_{max}-\delta}{T_{max}-t}\right).
\end{align}
Plugging this inequality into \eqref{UpperBoundOvershoot},
we find that for all 
$T_{max}-\delta<t<T_{max}$
\begin{equation}
    -2< r(t)
    <
     -2+\frac{r_0+2}{\sqrt{1+2(r_0+2)^2
    \left(m_0^2-\frac{1}{4}\right)
    \frac{1}{c}\log\left(
    \frac{T_{max}-\delta}{T_{max}-t}\right)}},
\end{equation}
and so taking the limit $t\to T_{max}$, we find that
\begin{equation}
    \lim_{t\to T_{max}} r(t)=-2.
\end{equation}
Finally we will observe that for all $0<t<T_{max}, k(t)=m_0(r(t)+2)$, and so we can immediately conclude that
\begin{equation}
    \lim_{t\to T_{max}} k(t)=0.
\end{equation}
This completes the proof.
\end{proof}

\begin{remark}
We have now proven all the cases of Theorem \ref{MainTheorem} except the cases where 
$-\frac{1}{2}\leq r_0\leq 0, k_0^2>1+r_0-2r_0^2$ 
and where $r_0< -\frac{1}{2}$.
The proofs for these two cases, however, are entirely analogous to the proof of Theorem \ref{overshoot}, and so they are omitted to avoid further cluttering up the paper. The argument that $r=-2$ is an attractor in this region is essentially the same, the only differences are minor technicalities in how the necessary differential inequalities are manipulated.
\end{remark}

\end{document}